\numberwithin{equation}{section}
\newtheorem{introthm}{Theorem}
\newtheorem{theorem}{Theorem}[section]
\newtheorem{lemma}[theorem]{Lemma}
\newtheorem{proposition}[theorem]{Proposition}
\newtheorem{corollary}[theorem]{Corollary}
\theoremstyle{definition}
\newtheorem{example}[theorem]{Example}
\newtheorem{remark}[theorem]{Remark}
\theoremstyle{remark}
\newcommand{\ff}{{\mathbb F}}
\newcommand{\zz}{{\mathbb Z}}
\newcommand{\qq}{{\mathbb Q}}
\newcommand{\pp}{{\mathbb P}}
\newcommand{\GL}{\operatorname{GL}}
\newcommand{\Gal}{\operatorname{Gal}}
\newcommand{\Pic}{\operatorname{Pic}}
\newcommand{\Aut}{\operatorname{Aut}}
\newcommand{\NS}{\operatorname{NS}}
\begin{document}
\title{On B\"uchi's K3 surface}
\author{Michela Artebani}
\address{
Departamento de Matem\'atica, 
Universidad de Concepci\'on, 
Casilla 160-C,
Concepci\'on, Chile}
\email{michela.artebani@gmail.com, antonio.laface@gmail.com}

\author{Antonio Laface}

\author{Damiano Testa}
\address{Mathematics Institute, Zeeman Building, University of Warwick, 
Coventry CV4 7AL, United Kingdom}
\email{adomani@gmail.com}

\subjclass[2000]{14J28, 11D41}
\keywords{B\"uchi problem, Kummer surfaces, rational points} 

\thanks{The first author has been partially 
supported by Proyecto FONDECYT Regular N. 1110249.
The second author has been partially supported 
by Proyecto FONDECYT Regular N. 1110096.
The third author has been partially supported by EPSRC grant EP/F060661/1.\\
}

\begin{abstract}
We study the B\"uchi
K3 surface proving that it belongs to the one dimensional 
family of Kummer surfaces associated to genus two curves 
with automorphism group $D_4$.  
We compute its Picard lattice and show that 
the rational points of the surface are Zariski-dense.
Moreover, we provide analogous results for the Kummer 
surface associated to any genus two curve whose automorphism 
group contains a non-hyperelliptic involution.
\end{abstract}
\maketitle

\section*{Introduction}
Let $n$ be an integer with $n \geq 3$ and let $\{x_i\}_{i=1}^n$ be a 
sequence of $n$ integers satisfying the system of second 
order difference equations 
\begin{equation}\label{diff}
 (x_{i+2}^2-x_{i+1}^2)-(x_{i+1}^2-x_i^2) = 2
 \qquad\text{ for }
 i\in\{1,\dots,n-2\}.
\end{equation}
Any sequence obtained from a solution of~\eqref{diff} 
by arbitrary sign changes or reversal is still a solution.  
Also, for any integer $x$, the sequence of consecutive 
integers $\{x_i=x + i \}_{i=1}^n$ is a solution of~\eqref{diff}.
An integral solution of~\eqref{diff} is {\em trivial} if it is 
obtained from a sequence of consecutive integers by 
arbitrary sign changes or reversal, and it is 
{\em non-trivial} otherwise.

The B\"uchi problem is the well known question asking 
whether there exists a positive integer $n$ such that all 
integral solutions of~\eqref{diff} are trivial.  It is known 
that if $n\leq 4$ there are non-trivial integral solutions
of~\eqref{diff}.

A positive answer to B\"uchi's problem 
would imply, using the negative answer 
to Hilbert's Tenth Problem by Yu.\ Matiyasevich, 
that there is no algorithm to decide 
whether a system of diagonal quadratic 
forms with integer coefficients 
admits an integer solution.
Several authors studied 
B\"uchi's problem and related questions.
In  \cite{Vo}  P.\ Vojta determined all 
the one-parameter solutions to \eqref{diff} 
for $n\geq 8$; combining this result with the 
Bombieri-Lang conjecture 
gives a positive answer to B\"uchi's problem.
An explicit description for length three and four
B\"uchi sequences is provided in \cite{Vi} and \cite{SV}.
The problem has been studied also 
on finite fields, see for example the 
paper by H.\ Pasten~\cite{Pa}. 
In~\cite{BB} the authors 
study the related problem of
finding sequences of integer squares
with constant second differences.
They find infinitely many non-trivial 
such sequences 
lying on elliptic curves of positive rank.
For a recent survey on the problem, we refer 
the reader to~\cite{PPV}.

For any $n\geq 3$ the system~\eqref{diff} 
defines a projective surface $X_n$ of degree $2^{n-2}$ 
in $\pp^n$. This is a rational surface for $n=3$ and $4$,
a K3 surface for $n=5$ and a surface of general type 
otherwise.
In this paper we show that the B\"uchi K3
surface $X_5$ is the minimal resolution of the 
Kummer surface of a genus 
two curve whose automorphism group 
is isomorphic to $D_4$. 
Such curves form a one-dimensional 
family in $\mathcal{M}_2$, so that $X_5$ naturally 
lives in a family of lattice polarized 
K3 surfaces. 
We determine the Picard lattice of a general member
of this family and show that it coincides 
with $\Pic(X_5)$. Moreover, we perform
a similar analysis for any curve in
$\mathcal{M}_2$ whose automorphism
group contains a non-hyperelliptic involution.
There are five irreducible families  of such curves,
determined by their automorphism group ~\cite{CGLR}.

\begin{introthm}\label{Pic}
Let $G$ be one of the groups in Table~\ref{tab:aut}, let 
$\Lambda_G$ be the corresponding lattice and
let $\mathscr{F}_G$ be the family of 
genus two curves whose automorphism
group is isomorphic to $G$.
If $C\in\mathscr{F}_G$ and $X$ is 
the minimal resolution of 
the Kummer surface of $C$, 
then $\Pic(X)$ contains a primitive 
sublattice isometric to $\Lambda_G$.
Moreover, $\Pic(X)$ is isomorphic
to $\Lambda_G$ whenever the 
two lattices have the same rank; this happens 
for very general curves $C$ in the family $\mathscr{F}_G$.
\end{introthm}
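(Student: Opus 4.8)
The plan is to describe $\Pic(X)$ through the classical dictionary between the Kummer surface $X$ and the abelian surface $A=J(C)$, and then to read off the extra classes forced by the automorphisms in $G$. Recall that $X$ carries sixteen exceptional $(-2)$-curves $N_1,\dots,N_{16}$ lying over the two-torsion points of $A$; together with the half-integral sums over the even subsets of the affine space $A[2]\cong(\zz/2)^4$ they span the Kummer lattice $K$, a primitive sublattice of $\Pic(X)$ of rank $16$. On the other side, the rational quotient $A\dashrightarrow X$ induces an embedding $\NS(A)(2)\hookrightarrow\Pic(X)$ whose image lies in $K^\perp$, and $K\oplus\NS(A)(2)$ has finite index in $\Pic(X)$. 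Thus I first compute $\NS(A)$ for a curve $C\in\mathscr{F}_G$: the principal polarization contributes the class $\theta$ with $\theta^2=2$, and each non-hyperelliptic involution in $G$ yields an elliptic quotient $C\to E$, hence an idempotent in $\End(A)\otimes\qq$ and a further algebraic class whose pairings with $\theta$ are fixed by the degree of the quotient. Running over the generators of $G$ produces an explicit sublattice $N_G\subseteq\NS(A)$ whose Gram matrix depends only on $G$, since the relevant intersection numbers are topological and $G$-equivariant, hence constant along the family.

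I would then define $\Lambda_G$ to be the saturation of $K\oplus N_G(2)$, so that $\operatorname{rank}(\Lambda_G)=16+\operatorname{rank}(N_G)$, and match it with the lattice recorded in Table~\ref{tab:aut}. The first assertion of the theorem amounts to realizing $\Lambda_G$ \emph{primitively} in $\Pic(X)$ for every $C\in\mathscr{F}_G$. The geometric classes above exist for all such $C$ and span a sublattice isometric to $\Lambda_G$; to see that this copy is primitive I would invoke the standard fact that the specialization homomorphism on Néron--Severi groups is a primitive embedding of lattices, so that $\Pic$ of the very general member of $\mathscr{F}_G$ sits primitively inside $\Pic(X)$ for every member, and the classes defining $\Lambda_G$ are precisely these specializations.

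For the final statement I would show that for very general $C\in\mathscr{F}_G$ one has $\rho(X)=\operatorname{rank}(\Lambda_G)$. The inequality $\rho(X)\geq\operatorname{rank}(\Lambda_G)$ holds everywhere because the spanning classes exist on every member. For the reverse inequality I use $\rho(X)=22-\operatorname{rank}(T_X)$ together with $T_X\cong T_A(2)$, which gives $\rho(X)=16+\rho(A)$, and a dimension count: if $d=\dim\mathscr{F}_G$, then Torelli for genus-two curves and Nikulin's recovery of $A$ from its Kummer surface force the period map of the family to have $d$-dimensional image; since the period domain attached to a transcendental lattice of rank $r$ has dimension $r-2$, the very general transcendental lattice has rank $\geq d+2$, whence $\rho(X)\leq 20-d=\operatorname{rank}(\Lambda_G)$. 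Therefore $\rho(X)=\operatorname{rank}(\Lambda_G)$ for very general $C$, and a primitive sublattice of full rank is the whole lattice, giving $\Pic(X)\cong\Lambda_G$ exactly when the two ranks agree.

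I expect the main obstacle to be pinning down $\Lambda_G$ as an integral lattice, not merely up to rank and rational isometry. The delicate point is the $2$-adic gluing between the Kummer lattice $K$, with its half-integral generators, and $N_G(2)$: this gluing controls the index of $K\oplus N_G(2)$ inside its saturation and hence the discriminant form of $\Lambda_G$. Carrying out Nikulin's discriminant-form computation case by case for the groups in Table~\ref{tab:aut}, and verifying that the resulting overlattice admits a unique primitive embedding into the K3 lattice (so that it is genuinely realized as $\Pic(X)$ and the $d$-dimensional family fills out the corresponding lattice-polarized moduli), is the technical heart of the argument.
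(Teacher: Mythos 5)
Your overall architecture is viable, and in its two decisive steps it is genuinely different from the paper's. The first half coincides in spirit: the paper (Theorem~\ref{NS}) also builds a sublattice of $\NS(JC)$ out of the theta divisor and the elliptic curves $E_\gamma=\{[p+\gamma(p)-K_C]\}$ attached to the non-hyperelliptic involutions $\gamma\in G$, computing the Gram matrices by fixed-point counts and Riemann--Hurwitz, and proving primitivity by excluding overlattices via Kani's theorem~\cite{Ka}. The divergence is in the passage to the Kummer surface. You propose to glue $\NS(JC)(2)$ with the Kummer lattice $K$ and to compute the saturation of $K\oplus N_G(2)$; the paper instead takes orthogonal complements: $T(JC)=\NS(JC)^\perp$ in $U^{\oplus 3}$, then $T(X)\cong T(JC)(2)$ by Morrison~\cite{Mo}*{Proposition~4.3}, then $\Pic(X)=T(X)^\perp$ in the K3 lattice, identified up to isometry through its signature and discriminant form using Nikulin's uniqueness theorem~\cite{N79} and the classification of binary forms. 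Morrison's result packages exactly the $2$-adic gluing that you call the technical heart, so the paper never performs it; your route would have to redo Nikulin's Kummer-lattice gluing case by case. Since you defer both that computation and the Gram matrices of $N_G$, what you have is a correct program rather than a proof: essentially all of the content of Table~\ref{tab:aut} lives in those two computations. (A small inaccuracy: uniqueness of a primitive embedding of $\Lambda_G$ into the K3 lattice is not what is needed for the theorem as stated; what must be computed is the saturation itself, i.e.\ $\Pic(X)$ of the very general member.)

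For the ``very general'' statement the two arguments also differ. Yours is a period-map dimension count, which is correct but needs the extra input that $C\mapsto X$ has countable fibres (Torelli plus recovery of $JC$ from its Kummer surface) to guarantee $\rho(X)\le 20-\dim\mathscr{F}_G$. The paper argues more elementarily: in the one-dimensional families two non-isometric rank-four lattices ($\Lambda_3$ and $\Lambda_4$, from the $2D_6$ and $\tilde S_4$ points) both occur as $\NS(JC)$ inside the same irreducible family, which is impossible if the very general member already had rank four, since the very general N\'eron--Severi lattice embeds primitively into that of every specialization; the explicit generality criterion is then Kani's formula ${\rm rk}\,\NS(JC)={\rm rk}\,{\rm Hom}(E_\tau,E_{\sigma\tau})+2$ of~\cite{Ka1}. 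Your primitivity argument via saturation and parallel transport is sound (indeed it gives primitivity for free, without quoting specialization theorems), so the gap in your proposal is one of execution, not of conception.
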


\begin{table}[h]
\begin{tabular}{l|l}
$G$  & $\Lambda_G$\\
\midrule
$V_4$  & $U(2)\oplus E_8\oplus D_7\oplus (-4)$\\[2pt]
$D_4$  & $U(4)\oplus E_8^2\oplus (-4)$\\[2pt]
$D_6$  & $U(2)\oplus E_8^2\oplus (-12)$\\[2pt]
$2D_6$ & $U\oplus E_8^2\oplus (-4)\oplus (-12)$\\[2pt]
$\tilde{S}_4$ & $U\oplus E_8^2\oplus (-4)\oplus (-8)$.
\end{tabular}
\caption{Automorphism group and Picard lattices of Kummer surfaces} \label{tab:aut}
\end{table}

As a consequence of this result we 
determine the Picard lattice of $X_5$
and of other K3 surfaces studied 
from an arithmetical point of view
(see Examples~\ref{bre} and~\ref{bb}).
We prove that the
set of non-trivial {\em{rational}}
points of the B\"uchi K3 surface
$X_5$ is infinite and Zariski-dense.
The strategy is to find an elliptic fibration 
defined over $\qq$ with infinitely many sections 
defined over $\qq$.
In fact, we prove that the same statement holds 
for all surfaces in any of the above families
(see Theorem~\ref{dense}).

The paper is organized as follows.
In Section~\ref{sebu} we introduce the B\"uchi
surfaces~$X_n$, characterise the fields over which
they are smooth and show that, for $n\geq 4$,
the linear automorphism group of $X_n$ consists of scalar permutations.
Section~\ref{se-g2} deals with genus two
curves admitting a non-hyperelliptic
involution. Here we determine the
N\'eron-Severi lattice of their Jacobians.
In Section~\ref{se-ks} we prove Theorem~\ref{Pic}
and Theorem~\ref{dense}.
In Section~\ref{seku} we provide a detailed
study of the Mordell-Weil groups of
some elliptic fibrations $X_5\to\pp^1$
defined over the rational numbers.
We also analyse 
the Galois representation of 
the elliptic curve $E$ and its set of 
supersingular primes: we use these results to 
compute the local zeta function of $X$ at such primes.

\subsection*{Acknowledgements}
We thank the anonymous referee 
for his careful reading of our paper and for
suggesting to us to look at families of lattice
polarized K3 surfaces.
We would also like to thank Kieran O'Grady 
and Daniel Huybrechts for their suggestions 
and their help with the Mukai correspondence 
and moduli spaces of sheaves on K3 surfaces,
as well as Matthias Sch\"utt, Pol Vanhaecke 
and Xavier Vidaux for comments on a preliminary version of this paper.

\section{B\"uchi surfaces} \label{sebu}
Let $n$ be an integer with $n \geq 3$.  The {\em B\"uchi surface} $X_n$ is the 
zero locus in $\pp^{n}$ of the
$n-2$ quadrics of equations
\begin{equation}\label{eq-buchi}
 x_{i+2}^2-2\,x_{i+1}^2+x_i^2-2\,x_0^2
 = 0
 \qquad
 \text{for }i\in\{1,\dots,n-2\}.
\end{equation}
Observe that $X_3$ is a quadric,
$X_4$ is a del Pezzo surface of degree 
four, $X_5$ is a K3 surface 
and $X_n$, $n\geq 6$, is a surface of general type (if they are smooth).
A first attempt to find rational
points on $X_n$ is to parametrize the 
surface when $n\leq 4$.
The del Pezzo surface $X_4$  
can be parametrized by the 
system of plane cubics passing 
through the five points
\[
 [1, 0, 0]
 \quad
 [0 , 1 , 0]
 \quad
 [0 , 0 , 1]
 \quad 
 [3 , -3 , 2]
 \quad
 [3 , -1 , 1].
\]
Explicitly we have the following 
parametrization of $X_4$:
\[
 \begin{array}{r@{\;\; ~=~ \;\;}l}
  x_0 & -2a^2b - 6ab^2 - 3a^2c + 9b^2c + 9ac^2 + 9bc^2\\[5pt]
  x_1 & -12ab^2 + 3a^2c - 18abc + 9b^2c - 18ac^2\\[5pt]
  x_2 & 2a^2b - 6ab^2 - 24abc - 9ac^2 + 9bc^2\\[5pt]
  x_3 & -4a^2b - 3a^2c + 6abc - 9b^2c - 18bc^2\\[5pt]
  x_4 & -6a^2b -6 ab^2 -6 a^2c - 18b^2c + 9ac^2 + 27bc^2
  \end{array}
\]
which immediately proves
the existence of infinitely
many non-trivial rational
solutions of the B\"uchi's 
problem for four variables. 
Indeed, not only the set of rational 
points of $X_4$ is infinite,
but even the set of points that are integral 
with respect to the divisor $x_0=0$ is infinite (see~\cite{PPV}).
The surface $X_4$ contains exactly 
 $16$ lines whose parametric equations in the affine chart 
$x_0\not=0$ are given by 
\[
x_i=\pm(t+i),\quad i \in \{1,2,3,4\}.
\]

The surface $X_5$ is the
double cover of $X_4$ 
branched along the smooth genus $5$ curve
$B\in |{-2K_{X_4}}|$ defined in $X_4$ by 
$x_3^2-2x_4^2=2x_0^2$ (the covering map 
is the projection on the first four coordinates).
Since the lines of $X_4$ are tangent
to the branch locus $B$,
the inverse image of each line 
in the double cover $X_5$ decomposes as the 
union of two lines.
Thus $X_5$ contains $32$ lines, 
with parametric equations 
\begin{equation}\label{lines}
x_i=\pm(t+i),\quad i \in \{1,2,3,4,5\}.
\end{equation}
These are the only lines in $X_5$, 
since the covering map
is linear. Observe that the lines 
on $X_5$ contain all
the trivial solutions 
of B\"uchi's problem.

\subsection{Smoothness of B\"uchi surfaces}

We show that all B\"uchi surfaces
are smooth in characteristic zero and provide
a necessary and sufficient condition for their smoothness
in positive characteristic. 
For any positive integer $n$ we define 
a pair of matrices
\[
B_n =  \begin{pmatrix}
-2 & 1 & -2 & 1 \\
-2 & & 1 & -2 & 1 \\
\vdots & & & \ddots & \ddots & \ddots \\
-2 & & & & 1 & -2 & 1
\end{pmatrix} 
\qquad
A_n = \begin{pmatrix}
-2 & 1 \\
1 & -2 & 1 \\
& \ddots & \ddots & \ddots \\
& & 1 & -2 & 1 \\
& & & 1 & -2 
\end{pmatrix}
\]
where $B_n$ is a $(n-2)\times (n+1)$
matrix while $-A_n$ is the Cartan matrix
of size $n$ of a root system of type $A$.
We recall that the determinant of the matrix $A_n$ 
equals $(-1)^n (n+1)$: this is well-known and it is also immediate to prove by induction developing the determinant with respect to the first row.  In particular, if $k$ is a field, then the rank of the matrix $A_n$ satisfies 
\[
{\rm rk} (A_n) = \left\{
\begin{array}{ll}
n & {\textrm{if ${\rm char}(k)$ does not divide $n+1$,}} \\[5pt]
n-1 & {\textrm{otherwise.}}
\end{array}
\right.
\]
It is easy to check that the following 
identity holds
\begin{equation} \label{relan}
\bigl( 1 \;\; 2 \;\; 3 \;\; \cdots \;\; n \bigr) A_n = \bigl( 0 \;\; 0 \;\; \cdots \;\; 0 \;\; -(n+1) \bigr).
\end{equation}

\begin{lemma} \label{detjac}
Let $a,b,c$ be integers satisfying $0 \leq a < b < c \leq n$ and let $B_n^{a,b,c}$ denote the $(n-2)\times (n-2)$-matrix obtained from $B_n$ by removing the three columns with indices $a+1,b+1,c+1$.  
If the index $a$ satisfies $a \geq 1$, then the determinant of the matrix $B_n^{a,b,c}$ equals 
\[
\det(B_n^{a,b,c}) = (-1)^{a+b+c}(a-b)(a-c)(b-c) .
\]
If the index $a$ satisfies $a = 0$, then the determinant of the matrix $B_n^{b,c} = B_n^{0,b,c}$ equals 
\[
\det(B_n^{b,c}) = (-1)^{b+c}(b-c) .
\]
\end{lemma}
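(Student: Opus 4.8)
The plan is to read the determinants off the kernel of $B_n$, using the classical duality between the maximal minors of a matrix and the complementary minors of a matrix whose rows span its kernel. First I would determine $\ker B_n$ explicitly. A vector $(w_0,w_1,\dots,w_n)$ lies in the kernel precisely when $w_{i+2}-2w_{i+1}+w_i=2w_0$ for $i=1,\dots,n-2$, i.e. the sequence $(w_1,\dots,w_n)$ has constant second difference $2w_0$. Since the general solution of such a recurrence is a quadratic polynomial in the index, the kernel is spanned by the rows of
\[
K=\begin{pmatrix} 1 & 1 & 4 & 9 & \cdots & n^2 \\ 0 & 1 & 2 & 3 & \cdots & n \\ 0 & 1 & 1 & 1 & \cdots & 1 \end{pmatrix},
\]
whose columns, indexed $0,\dots,n$, are $(1,0,0)^{\top}$ and $(j^2,j,1)^{\top}$ for $j\geq 1$. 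I would first check that $B_n$ has rank $n-2$, equivalently $\dim\ker B_n=3$, so that these three manifestly independent vectors span the whole kernel; this follows at once from the observation that the minor $B_n^{0,1,2}$, obtained by deleting the first three columns, is lower triangular with $1$'s on the diagonal and hence has determinant $1$.

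Next I would invoke the duality: for the full-rank $(n-2)\times(n+1)$ matrix $B_n$ and the $3\times(n+1)$ matrix $K$ whose rows span its kernel, each maximal minor of $B_n$ equals a fixed nonzero constant $\lambda$ times a sign times the complementary $3\times3$ minor of $K$. Concretely this should read
\[
\det\bigl(B_n^{a,b,c}\bigr)=\lambda\,(-1)^{a+b+c}\,\det\bigl(K_{\{a,b,c\}}\bigr),
\]
where $K_{\{a,b,c\}}$ is the submatrix of $K$ on columns $a,b,c$. The two $3\times3$ minors are then routine: for $a\geq 1$ all three columns have the shape $(j^2,j,1)^{\top}$, so $\det K_{\{a,b,c\}}$ is a Vandermonde determinant equal to $(a-b)(a-c)(b-c)$; for $a=0$ the first column is $(1,0,0)^{\top}$ and cofactor expansion leaves the $2\times2$ Vandermonde determinant $b-c$. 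Finally I would pin down $\lambda$ by comparing one explicit value: the triangular computation above gives $\det B_n^{0,1,2}=1$, while the right-hand side is $\lambda\,(-1)^{3}(1-2)=\lambda$, forcing $\lambda=1$ and yielding both claimed formulas.

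The routine parts are the kernel computation and the two Vandermonde evaluations. The genuine obstacle is making the duality precise with the correct sign: one must verify that the proportionality between the two families of minors is governed by a single universal constant and by the shuffle sign of the removed columns, and that this sign is exactly $(-1)^{a+b+c}$ in the labelling where the columns are indexed $0,\dots,n$. I would confirm this through the Laplace expansion of the square matrix $\bigl(\begin{smallmatrix} B_n \\ K\end{smallmatrix}\bigr)$ along its last three rows, or more cheaply by testing the sign against a second small minor such as $B_n^{1,2,3}$. If one prefers to sidestep the Grassmann duality altogether, the same formulas can be reached by induction on $n$: expanding $\det B_n^{a,b,c}$ along the last column relates the cases $n$ and $n-1$, and the three possible positions of $\{a,b,c\}$ relative to the final indices supply the base cases. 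I find the duality argument cleaner and less case-ridden, and I would present it as the main line of proof.
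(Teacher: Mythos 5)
Your proposal is correct --- all of its computations check out --- but it follows a genuinely different route from the paper. The paper argues by induction on $n$: cofactor expansion along the second column (when $a\geq 2$) and along the last column (when $c\leq n-1$) reduces everything to the case $a=1$, $c=n$, which is then settled by explicit row operations based on the identity $\bigl(1\ 2\ \cdots\ n\bigr)A_n=\bigl(0\ \cdots\ 0\ -(n+1)\bigr)$; this is essentially the inductive alternative you sketch in your closing paragraph. Your route instead reads the answer off the kernel: the kernel of $B_n$ is exactly the space of vectors $(w_0,\dots,w_n)$ with $w_j=w_0j^2+c_1j+c_0$ for $j\geq 1$, so your matrix $K$ is right; the rank claim via the lower-triangular minor $\det B_n^{0,1,2}=1$ is right; the two Vandermonde evaluations $(a-b)(a-c)(b-c)$ and $b-c$ are right; and the normalization $\lambda=1$ then reproduces both stated formulas, with signs that I verified independently on small cases (e.g.\ $\det B_4^{1,2,3}=-2=(-1)^{6}(1-2)(1-3)(2-3)$ and $\det B_4^{2,3,4}=2$). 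What your approach buys is uniformity (the cases $a\geq1$ and $a=0$ are treated identically) and a conceptual explanation of why the answer is a Vandermonde-type product; what the paper's approach buys is complete elementarity, using nothing beyond column expansions and row operations.

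The one step you should tighten is the duality itself, which you correctly identify as the crux. As literally stated, your verification is not yet a proof: the single Laplace expansion of $\det\begin{pmatrix}B_n\\ K\end{pmatrix}$ along its last three rows yields only the one scalar identity $\det M=\sum_{S}\pm_S\det(K_S)\det(B_n^{S})$, which does not by itself force the minor-by-minor proportionality, and testing a second small minor fixes a convention without proving the general sign rule. Two clean ways to close this. (a) Run the Laplace expansion with a \emph{variable} bottom block: both $\Phi(u,v,w)=\det\begin{pmatrix}B_n\\ u\\ v\\ w\end{pmatrix}$ and $\Psi(u,v,w)=\det\bigl(KY^{\mathsf{T}}\bigr)$, where $Y$ has rows $u,v,w$, are alternating trilinear forms that vanish whenever an argument lies in the row space $W$ of $B_n$ (for $\Psi$ because $W=(\ker B_n)^{\perp}$, the dot product being positive definite over $\mathbb{Q}$, which also makes the stacked matrix invertible). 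Such forms live in the one-dimensional space of alternating $3$-forms on the $3$-dimensional quotient $\mathbb{Q}^{n+1}/W$, hence $\Phi=\mu\Psi$; expanding $\Phi$ by Laplace and $\Psi$ by Cauchy--Binet as combinations of the linearly independent functions $m_S(u,v,w)=\det(Y_S)$ and equating coefficients gives exactly $\det(B_n^{S})=\lambda(-1)^{a+b+c}\det(K_S)$. (b) Alternatively, apply Jacobi's complementary-minor identity to the invertible matrix $\begin{pmatrix}B_n\\ K\end{pmatrix}$. With either patch your argument is complete, and in my view more illuminating than the paper's induction.
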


\begin{proof}
We only treat the case in which $a$ is different from $0$, since the remaining case is similar and simpler.  Fix integers $a,b,c$ satisfying $1 \leq a < b < c \leq n$ and proceed by induction on $n$ to prove the assertion.  If $a \geq 2$, then we develop the determinant of $B_n^{a,b,c}$ with respect to the second column to find $\det(B_n^{a,b,c}) = -\det(B_{n-1}^{a-1,b-1,c-1})$.  Similarly, if $c \leq n-1$, then we develop the determinant of $B_n^{a,b,c}$ with respect to the last column to find $\det(B_n^{a,b,c}) = \det(B_{n-1}^{a,b,c})$.  Therefore, by the inductive hypothesis we reduce to the case in which the indices satisfy $a=1$, $c=n$ and the matrix $B_n^b = B_n^{1,b,n}$ is the matrix 
\[
B_n^b = 
\begin{pmatrix}
\vdots & & & \framebox{$A_{b-2}$} \\
-2 & 0 & \cdots & 0 \;\;\; 1 & 1 \;\;\; 0 \; \cdots \\
\vdots & & & & \framebox{$A_{n-b-1}$} 
\end{pmatrix} ,
\]
where the boxes denote square blocks of sizes $b-2$ and $n-b-1$ filled by the matrices $A_{b-2}$ and $A_{n-b-1}$ respectively.  If, for $i \in \{1,\ldots,b-1\}$, we multiply the $i$-th row of the matrix $B_n^b$ and then sum the resulting row vectors, then, using relation~\eqref{relan}, we obtain the row vector $v_1 = -(b-1) \bigl( (b-2) \;\; 0 \;\; 0 \;\; \cdots \;\; 0 \;\; 1 \;\; 0 \;\; \cdots 0 \bigr)$.  A similar computation shows that row vector $v_2 = -(n-b) \bigl( (n-b-1) \;\; 0 \;\; 0 \;\; \cdots \;\; 0 \;\; 1 \;\; 0 \;\; \cdots 0 \bigr)$ is a linear combination of the last $n-b-1$ rows.  Hence, adding $\frac{1}{b-1}v_1 + \frac{1}{n-b}v_2$ to the $(b-1)$-st row of the matrix $B_n^b$, the determinant is not affected and the matrix $B_n^b$ becomes the matrix 
\[
\begin{pmatrix}
\vdots & & & \framebox{$A_{b-2}$} \\
-(n-1) & 0 & \cdots & 0 \;\;\; 0 & 0 \;\;\; 0 \; \cdots \\
\vdots & & & & \framebox{$A_{n-b-1}$}
\end{pmatrix} ,
\]
whose determinant is $(-1)^{1+b+n}(1-n)(1-b)(b-n)$, as required.
\end{proof}

Our next result is about the smoothness of the scheme $X_n$.  To simplify the notation, let $Y_n$ denote the linear subspace of $\mathbb{P}^n$ defined by the system of equations 
\[
Y_n \colon \left\{
\begin{array}{rcl}
x_i-2x_{i+1}+x_{i+2} & = & 2x_n , \\[5pt]
{\textrm{for }}i & \in & \{1,\ldots,n-2\}.
\end{array}
\right.
\]
The dimension of the linear subspace $Y_n$ is two and there is a morphism $q \colon X_n \to Y_n$ determined by $q([x_0,\ldots,x_n]) = [x_0^2,\ldots,x_n^2]$.  The morphism $q$ is finite of degree $2^{n}$.

\begin{corollary}
The surface $X_n$ is singular if and only if 
the characteristic $p$ of the ground field satisfies 
$0<p<n$.  If the surface $X_n$ is singular, then $X_n$ has exactly $2^{n-p}$ irreducible components each isomorphic to $X_p$.
\end{corollary}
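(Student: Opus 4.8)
The plan is to reduce smoothness to a rank condition on the matrix $B_n$, read it off from Lemma~\ref{detjac}, and then obtain the decomposition from the observation that in characteristic $p$ the value $x_j^2$ depends on $j$ only through $j\bmod p$. First I would set up the Jacobian criterion. The surface is cut out by the $n-2$ quadrics~\eqref{eq-buchi}, and differentiating shows that the Jacobian matrix at a point $P=[x_0,\dots,x_n]$ equals $2\,B_n\,\mathrm{diag}(x_0,\dots,x_n)$. Thus $P$ is a smooth point of $X_n$ exactly when this matrix has rank $n-2$. If $\mathrm{char}\,k\neq2$ the scalar $2$ is a unit and rescaling each column by the nonzero $x_j$ does not change the rank, so the condition becomes $\mathrm{rk}\bigl(B_n[S]\bigr)=n-2$, where $B_n[S]$ denotes the submatrix of $B_n$ on the columns indexed by the support $S=\{j:x_j\neq0\}$.

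Next I would pin down the characteristics for which every maximal minor of $B_n$ is nonzero. By Lemma~\ref{detjac} each $(n-2)\times(n-2)$ minor of $B_n$ is, up to sign, a product of pairwise differences of the three omitted column indices; since all indices lie in $\{0,\dots,n\}$, each such difference has absolute value at most $n-1$. Hence all maximal minors are nonzero modulo $p$ precisely when $p=0$ or $p\geq n$, and otherwise a minor vanishes exactly because two omitted indices become congruent modulo $p$. To control the support I would use that~\eqref{eq-buchi} forces the sequence $(x_j^2)$ to have constant second difference $2x_0^2$, so $x_j^2=x_0^2 j^2+\beta j+\gamma$ for $j\geq1$ and suitable $\beta,\gamma$ (this is the content of the finite map $q\colon X_n\to Y_n$). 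When $p=0$ or $p\geq n$ the integers $1,\dots,n$ are pairwise distinct in $k$, so this quadratic in $j$ vanishes for at most two indices; together with a possible zero at $j=0$ this gives $|S|\geq n-1$, whence $\mathrm{rk}\bigl(B_n[S]\bigr)=n-2$ and $X_n$ is smooth. Conversely, for $0<p<n$ I would exhibit a singular point: choosing $1\leq u<v\leq n$ with $u\equiv v\pmod p$ (possible precisely when $p<n$), the point with $x_0=1$ and $x_j^2=(j-u)(j-v)$ lies on $X_n$ and has $x_u=x_v=0$, so every $(n-2)\times(n-2)$ minor of the Jacobian omits the columns $u$ and $v$ and therefore carries the factor $u-v\equiv0$, forcing the rank below $n-2$. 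The characteristic two case is treated separately: there the quadrics become the perfect squares $(x_i+x_{i+2})^2$, so $X_n$ is everywhere non-reduced and hence singular, consistently with $2<n$.

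Finally I would deduce the decomposition. In characteristic $p$ the formula $x_j^2=x_0^2 j^2+\beta j+\gamma$ shows $x_j^2=x_{j'}^2$ whenever $j\equiv j'\pmod p$; projecting onto the first $p+1$ coordinates then gives a finite morphism $\pi\colon X_n\to X_p$, and each of the $2^{n-p}$ sign choices for the coordinates $x_{p+1},\dots,x_n$ determines a section of $\pi$ whose image is a closed subscheme isomorphic to $X_p$. I would then argue that these images are exactly the irreducible components of $X_n$: they cover $X_n$ by the sign determination, they are pairwise distinct on the loci where the relevant coordinate is nonzero, and each is irreducible because $X_p$ is a smooth complete intersection surface (smooth by the first part, hence connected and irreducible). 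The hard part is precisely this last step --- verifying rigorously that the $2^{n-p}$ sign-sections are distinct, irreducible, and exhaust $X_n$ --- and reconciling it with the characteristic two degeneration, where the two signs coincide so that the sections collapse onto a single component appearing with multiplicity $2^{n-p}$; the count is cleanest for odd $p$ and this edge case must be handled on its own.
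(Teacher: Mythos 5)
Your proposal is correct and is essentially the paper's own argument: both reduce smoothness to the rank of $B_n$ on the support of the point via the factorization of the Jacobian, both invoke Lemma~\ref{detjac} to control the maximal minors (your quadratic $x_j^2=x_0^2j^2+\beta j+\gamma$ bounding the support by two zeros replaces the paper's separate treatment of ``at most two'' versus ``three'' vanishing coordinates, and your explicit singular point with $x_u=x_v=0$, $u\equiv v\pmod p$, replaces the paper's implicit deduction of singularity from the decomposition), and both obtain the component decomposition from the congruence $x_i^2=x_j^2$ for $i\equiv j\pmod p$ together with sign choices. The one place you go beyond the paper is the characteristic-two caveat, and it is well taken: there the two signs coincide, the equations~\eqref{eq-buchi} become perfect squares, and $X_n$ is a single non-reduced component isomorphic to $\pp^2$, so the stated count of exactly $2^{n-p}$ irreducible components only holds if read with multiplicity --- an edge case the paper's proof passes over in silence.
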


\begin{proof}
An easy induction shows that, for $i \in \{1,\ldots,n\}$, the equations 
\[
x_i^2 = (i-1)(i-2)x_0^2 - (i-2)x_1^2+(i-1)x_2^2
\]
hold on $X_n$.  In particular, if $i,j \in \{1,\ldots,n\}$ satisfy $i \equiv j \pmod{p}$, then the equation $x_i^2 = x_j^2$ holds on $X_n$.  We deduce that to prove the corollary, it suffices to show that $X_n$ is smooth under the assumption $p \notin \{2,3,\ldots,{n-1}\}$.

Let $x=[x_0,\ldots,x_n]$ be a point in $X_n$.  To check smoothness, we compute the rank of the Jacobian matrix $J_n$ of the equations~\eqref{eq-buchi} at the point $x$.  Observe that the rank of the matrix $J_n$ at $x$ depends only on the positions of the vanishing coordinates of the point $x$.
As we observed above, we reduce to the case in which $p$ satisfies $p \notin \{2,\ldots,{n-1}\}$.

If at most two of the coordinates of $x$ are non-zero, then let $a,b \in \{0,1,\ldots,n\}$ be indices such that $a<b$ and for all $i \in \{0,\ldots,n\} \setminus \{a,b\}$ we have $x_i \neq 0$.  Let $c$ be an integer satisfying $c \in (\{1,\ldots,n\} \cap \{b-1,b+1\}) \setminus \{a\}$.  Lemma~\ref{detjac} implies that the matrix obtained from $B_n$ by removing the columns with indices $a,b,c$ has non-vanishing determinant and the result follows in this case.

To conclude, it therefore suffices to show that, assuming $p \notin \{2,\ldots,{n-1}\}$, there are no points on $X_n$ with three vanishing coordinates.  Suppose that the integers $a,b,c \in \{0,\ldots,n\}$ satisfy $0 \leq a < b < c \leq n$ and also $x_a,x_b,x_c = 0$.  The determinant of the matrix $B_n$ with the columns $a,b,c$ removed is non-zero by Lemma~\ref{detjac}, since there are no indices in $\{1,\ldots,n\} \subset \{1,2,\ldots,p\}$ with difference divisible by $p$.  It follows that the subscheme of $Y_n$ defined by $x_a=x_b=x_c=0$ is empty and the corollary follows.
\end{proof}

\subsection{Linear automorphisms}\label{auto}
A \emph{scalar permutation} is an automorphism of the projective space $\mathbb{P}^N$ whose matrix with respect to the natural homogeneous coordinates only has one non-zero entry in each row and in each column.  Equivalently, a scalar permutation is an automorphism of projective space fixing the set of coordinate points (namely, those points of projective space with a unique non-zero coordinate).

\begin{theorem}\label{aut}
Let $X\subset\pp^N$ be a non-degenerate 
smooth projective variety of codimension at least two
whose defining ideal is a complete intersection 
of diagonal hypersurfaces of the same degree $d\geq 2$.
Then the linear automorphism 
group of $X$ consists of scalar permutations.
\end{theorem}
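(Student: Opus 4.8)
The plan is to show that any linear automorphism $g$ of $X$ must permute the coordinate points $P_0,\dots,P_N$ of $\pp^N$, which by the second characterization of scalar permutations is exactly what we want. First I would record what $g$ preserves: since $g$ is induced by an element of $\GL_{N+1}$ with $g(X)=X$, it preserves the homogeneous ideal $I(X)$, and because $X$ is a complete intersection generated in degree $d$, it preserves the degree-$d$ piece $W:=I(X)_d$, which is the $c$-dimensional span of the defining diagonal forms $F_1,\dots,F_c$ (where $c\geq 2$ is the codimension). Writing $F_k=\sum_i a_{ki}x_i^d$ and $A=(a_{ki})$, I would first extract three consequences of smoothness: (i) $\operatorname{char}(k)\nmid d$, since otherwise every partial $\partial F_k/\partial x_i=d\,a_{ki}x_i^{d-1}$ vanishes identically and $X$ is everywhere singular; (ii) no coordinate point $P_i$ lies on $X$, since $P_i\in X$ would force the $i$-th column of $A$ to vanish and the Jacobian to vanish at $P_i$; and (iii) the columns of $A$ are pairwise non-proportional, since proportional columns $i,j$ would produce over $\bar k$ a point of $X$ supported on $\{i,j\}$ at which the Jacobian has rank $\leq 1<c$.

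Next I would study the discriminant of the linear system $\pp(W)$. For a diagonal form $F=\sum a_i x_i^d$ the singular locus of $V(F)$ is the coordinate subspace cut out by the $x_i$ with $a_i\neq 0$; in particular $V(F)$ is singular precisely when some $a_i$ vanishes, and it is singular at $P_i$ precisely when $a_i=0$ (here $\operatorname{char}\nmid d$ and Euler's relation guarantee $F(P_i)=0$ automatically). Hence the locus of singular members inside $\pp(W)$ is the union $\bigcup_i h_i$ of the hyperplanes $h_i:=\{[F]\in\pp(W)\colon V(F)\text{ is singular at }P_i\}=\pp(W\cap H_i)$, where $H_i=\{a_i=0\}$. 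By consequence (iii) these $N+1$ hyperplanes are pairwise distinct, so they are exactly the irreducible components of the reduced discriminant.

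Then I would exploit that $g$ preserves this arrangement. Since $g$ preserves $W$ and sends singular hypersurfaces to singular hypersurfaces, it preserves the discriminant and therefore permutes its components, say $g(h_i)=h_{\tau(i)}$ for some $\tau\in S_{N+1}$. The key computation is to evaluate $g(h_i)$ intrinsically: because $V(gF)=g(V(F))$, the image $g(h_i)$ is the set of members of $\pp(W)$ that are singular at $g(P_i)$, namely $\pp\bigl(W\cap\bigcap_{l\in\operatorname{supp}(gP_i)}H_l\bigr)$. Equating this with the single hyperplane $h_{\tau(i)}=\pp(W\cap H_{\tau(i)})$ yields the identity $\bigcap_{l\in\operatorname{supp}(gP_i)}(W\cap H_l)=W\cap H_{\tau(i)}$. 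As the hyperplanes $W\cap H_l$ are pairwise distinct, an intersection of two or more of them has codimension $\geq 2$ in $W$ and so cannot coincide with the codimension-one space $W\cap H_{\tau(i)}$; since $\operatorname{supp}(gP_i)\neq\emptyset$, this forces $\operatorname{supp}(gP_i)=\{\tau(i)\}$, i.e. $g(P_i)=P_{\tau(i)}$. Thus $g$ permutes the coordinate points and is a scalar permutation.

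I expect the main obstacle to be the passage from the abstract hypotheses to the concrete arrangement-theoretic picture of the second paragraph, and specifically step (iii): proving that the columns of $A$ are pairwise non-proportional, equivalently that the loci $h_i$ are genuinely distinct hyperplanes. The whole identification in the last paragraph collapses if several singular-member loci coincide, so it is precisely here that codimension $\geq 2$ and smoothness (the absence of points of $X$ with small support) must be used in an essential way; the characteristic hypothesis $\operatorname{char}\nmid d$, which also drops out of smoothness, is what keeps the diagonal Jacobian structure nondegenerate throughout.
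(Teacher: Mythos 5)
Your proof is correct, but it follows a genuinely different route from the paper's --- in effect, the dual one. The paper works downstairs in $\pp^N$: to each linear system $\mathcal{Q}\subset\pp\bigl(I(X)_d\bigr)$ it attaches the coordinate subspace $\Sigma_{\mathcal{Q}}$ obtained by intersecting the vertices of the cones in $\mathcal{Q}$, observes that any linear automorphism permutes the collection $\bigl\{\Sigma_{\mathcal{Q}}\bigr\}$, and then shows that every coordinate point occurs as some $\Sigma_{\mathcal{Q}}$; the substance there is that for every subset $S$ of $\dim X+2$ variables there is a diagonal form in $I(X)_d$ with support \emph{exactly} $S$ --- support contained in $S$ comes from elimination (a dimension count inside $I(X)_d$), and the lower bound $\#\operatorname{supp}(f)\geq \dim X+2$ comes from smoothness: $X$ misses the vertex $L$ of the cone $V(f)$, and since $X$ lies on that cone it projects finitely from $L$ into a hypersurface of $\pp^{r-1}$, forcing $\dim X\leq r-2$ (the naive count from $X\cap L=\emptyset$ alone would only give $r\geq\dim X+1$). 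You work instead upstairs in $\pp(W)$ with the discriminant arrangement: your pivotal lemma is the pairwise distinctness of the $N+1$ hyperplanes $h_i$, i.e.\ non-proportionality of the columns of $A$, proved by exhibiting a point of $X$ supported on two coordinates at which the Jacobian would have rank at most $1<c$; coordinate points are then forced to map to coordinate points because irreducible components of the discriminant are permuted and an intersection of two or more distinct hyperplanes has codimension at least two. The raw ingredients coincide (preservation of $I(X)_d$, all of whose members are diagonal; coordinate singular loci; smoothness of a complete intersection of codimension at least two), but the key lemmas are different and neither argument specializes to the other. Your route is the more elementary one --- linear algebra plus the Jacobian criterion, with no elimination and no projection-from-a-vertex argument --- and it makes the characteristic hypothesis explicit: you deduce $\operatorname{char}\nmid d$ from smoothness, a point the paper's proof passes over although it is needed to identify vertices of diagonal cones with singular loci. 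What the paper's route buys is the extra geometric fact that every set of $\dim X+2$ variables supports a diagonal form in $I(X)_d$, which is the same elimination mechanism the paper reuses to list the rank-four quadrics containing the surfaces $\mathscr{X}$ in Theorem~\ref{thm:quad}.
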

\begin{proof}
If $\mathcal{Q} \subset \mathbb{P} \bigl( I(X)_d \bigr)$ 
is a linear space of hypersurfaces of degree $d$ vanishing on $X$, 
denote by $\Sigma_\mathcal{Q}$ the intersection 
of the vertices of the cones   
in $\mathcal{Q}$, and note that $\Sigma_\mathcal{Q}$ 
is a coordinate linear subspace. 
Therefore, the linear automorphism group of $X$ 
permutes the set of linear subspaces 
$\bigl\{ \Sigma_\mathcal{Q} ~\mid~ \mathcal{Q} \subset \mathbb{P} \bigl( I(X)_d \bigr) \bigr\}$.
Hence, to prove the assertion, it suffices to show that 
all coordinate points of $\mathbb{P}^N$ 
appear among the linear subspaces $\Sigma_\mathcal{Q}$.
To prove this it is enough to show that for any
subset $S$ of $\dim X+2$ variables
of the homogeneous coordinate ring of $\pp^N$
there is a diagonal element $f\in I(X)_d$
containing exactly the variables in $S$:
given such an $S$ we eliminate the complementary
variables from $I(X)_d$ obtaining a homogeneous
form $f\in I(X)_d$. 
The vertex of the cone of $V(f)$ is a linear
subspace $L$ of dimension $N-r$. Since
$X$ is a smooth complete intersection 
it follows that $X\cap L=\emptyset$ and thus
$r\geq \dim X+2$. We conclude that $r=\dim X+2$, as required.
\end{proof}

As an application of Theorem~\ref{aut}, we deduce the following corollary.

\begin{corollary}
Let $X_n$ be a smooth B\"uchi surface with
$n\geq 4$. Then the linear automorphism group
of $X_n$ consists of scalar permutations.
\end{corollary}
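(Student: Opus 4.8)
The plan is to deduce this corollary directly from Theorem~\ref{aut}, so the entire argument reduces to checking that a smooth B\"uchi surface $X_n$ with $n\geq 4$ satisfies the hypotheses of that theorem. First I would collect the numerical data. Since $X_n$ is a surface in $\pp^n$, its codimension is $n-2$, and the condition $n\geq 4$ is exactly what guarantees that this codimension is at least two. The defining equations~\eqref{eq-buchi} are the $n-2$ quadrics $x_{i+2}^2-2x_{i+1}^2+x_i^2-2x_0^2$ for $i\in\{1,\dots,n-2\}$; each of these involves only squares of the coordinates, so they are diagonal hypersurfaces, all of the same degree $d=2\geq 2$.

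Next I would argue that these quadrics exhibit $X_n$ as a complete intersection. As $X_n$ is assumed to be a smooth surface, it has dimension two, hence its zero locus cut out by the $n-2$ equations in $\pp^n$ has codimension exactly $n-2$, which equals the number of equations. Therefore the quadrics form a regular sequence and $X_n$ is a complete intersection of diagonal quadrics, as required by Theorem~\ref{aut}.

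The only point that is not purely formal, and which I expect to be the main obstacle, is the verification of non-degeneracy. Here I would use that a complete intersection subscheme of positive dimension is arithmetically Cohen--Macaulay: the graded ring $S/J$, where $J$ is the ideal generated by the defining quadrics, is Cohen--Macaulay of dimension $\dim X_n+1=3$, so its depth at the irrelevant ideal is positive and $J$ is saturated. Consequently $J$ coincides with the homogeneous ideal $I(X_n)$. Since $J$ is generated in degree two, it contains no nonzero linear form, that is $I(X_n)_1=0$, which is precisely the statement that $X_n$ is non-degenerate in $\pp^n$. With non-degeneracy established, $X_n$ is a non-degenerate smooth projective surface of codimension $n-2\geq 2$ whose defining ideal is a complete intersection of diagonal quadrics, so Theorem~\ref{aut} applies and yields that the linear automorphism group of $X_n$ consists of scalar permutations. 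All remaining hypotheses are immediate from the explicit equations and the dimension count, so the verification of non-degeneracy is the essential step.
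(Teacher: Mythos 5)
Your proposal is correct and follows exactly the paper's route: the paper states the corollary as an immediate application of Theorem~\ref{aut}, leaving the hypothesis check implicit, and you simply make that check explicit (codimension $n-2\geq 2$, diagonal quadrics of degree two, regular sequence from the dimension count, and non-degeneracy via saturation of the complete intersection ideal). Your saturation argument for $I(X_n)_1=0$ is a sound way to fill in the one non-formal point, so nothing further is needed.
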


\begin{remark}
Each surface $X_n$ is a complete
intersection of quadrics. 
Using the adjunction 
formula, we see that the canonical divisor 
$X_n$ is linearly equivalent 
to $(n-5)H$, where $H$ is the hyperplane 
section of $X_n$.  Every automorphism 
of $X_n$ must preserve the (anti)canonical 
linear series.   
Since the Picard group of $X_n$ is 
torsion-free, it follows that the automorphism 
group of $X_n$ consists just of the linear 
automorphisms of $X_n$ when $n \geq 3$ and $n \neq 5$.

If $n=5$ the previous argument
is no longer true, since 
$K_{X_5}$ is trivial. In this
case the automorphism group
of $X_5$ has infinite order
and we will show in Section~\ref{sub:ratpt}
that this is true even for the subgroup
$\Aut(X_5)_\qq$ of automorphisms
defined over the rationals.
\end{remark}

Let $\rho \colon \mathbb{P}^n \to \mathbb{P}^n$ be the involution defined by 
\[
\rho([x_0,\ldots,x_n]) = [x_0,x_n,x_{n-1},\ldots,x_2,x_1];
\]
the automorphism $\rho$ induces an involution of $X_n$ that we denote by the same symbol.

For $i \in \{0,\ldots,5\}$, denote by $\sigma_i \colon X_5 \to X_5$ the automorphism induced by the sign change of the variable $x_i$ and by $\tau$ the involution
\[
 \tau\colon X_5\to X_5
 \qquad
 [x_0,\dots,x_4]\mapsto
 [x_3, 2x_2 , x_1 , 2x_0 , x_5 , 2x_4].
\]
As a consequence of Proposition \ref{autofam} 
(or also by a direct computer analysis),
we find that 
the linear automorphism group of $X_5$ is generated by 
$\rho,\sigma_0,\ldots,\sigma_5$ and $\tau$;
these elements are non-symplectic involutions.  
Thus, any product of an even number of these generators is symplectic.  
The product of three distinct elements among $\sigma_0 , \ldots , \sigma_5$ 
is a non-symplectic involution with no fixed points; 
the corresponding quotient of $X_5$ is an Enriques surface.  
Finally, the fixed locus of $\rho$ is a smooth genus one curve
and the fixed locus of $\tau$ is
the union of two disjoint conics.

\section{Genus two curves} \label{se-g2}
Let $C$ be a curve of genus two and let 
$\sigma$ be the hyperelliptic involution on $C$.
By~\cite{Ig} (see also~\cite{CGLR}*{Table~1})
the automorphism
group of $C$ is isomorphic to 
one of the following groups:
$\zz/2\zz$,
$(\zz/2\zz)^2$,
$D_4$,
$D_{6}$,
$2D_{6}$,
$\tilde{S}_4$,
$\zz/10\zz$.
In all but the first and last case, the group $\Aut(C)$
contains a non-hyperelliptic
involution: we fix one and denote it by~$\tau$. 
We denote by $E_1$ and $E_2$ the
two quotients
\begin{equation}
\label{E12}
 E_1 = C/\langle\tau\rangle
 \qquad \textrm{and} \qquad
 E_2=C/\langle\tau\sigma\rangle,
\end{equation}
which are both curves of genus one,
and by  $\pi_i \colon C\to E_i$
the natural quotient maps.
The main result of this section 
is the following theorem.

\begin{theorem} \label{NS}
Let $G$ be one of the groups in Table~\ref{tab:fam}, let $L_G$ be the 
corresponding lattice and let 
$\mathscr{F}_G$ be the family of 
genus two curves whose automorphism
group is isomorphic to $G$.
If $C\in\mathscr{F}_G$, then the N\'eron-Severi 
lattice of its Jacobian surface 
contains a primitive sublattice 
isometric to the lattice $L_G$.  
Moreover, the lattices $\NS(JC)$ and $L_G$ are 
isomorphic whenever they 
have the same rank; this happens 
for very general curves
$C$ in the family $\mathscr{F}_G$.
\end{theorem}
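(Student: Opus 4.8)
The plan is to use the non-hyperelliptic involution $\tau$, which every group $G$ in the table contains, to describe $JC$ up to isogeny as a product of the two elliptic quotients, and then to write down an explicit set of divisor classes whose Gram matrix realises $L_G$. First I would record that the quotient maps $\pi_1\colon C\to E_1$ and $\pi_2\colon C\to E_2$ of~\eqref{E12} assemble into an isogeny $JC\to E_1\times E_2$; since $\NS(-)\otimes\qq$ is an isogeny invariant, this controls $\operatorname{rank}\NS(JC)$ through $\operatorname{Hom}(E_1,E_2)\otimes\qq$. On $JC$ I would then work with the principal polarisation $\Theta$ (so that $\Theta^2=2$) together with the pulled-back classes $F_i=\pi_i^{*}(p_i)$ for points $p_i\in E_i$. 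Because the hyperelliptic involution $\sigma$ acts as $-1$ on $JC$, the induced involution $\tau^{*}$ fixes $F_1$ and negates $F_2$, so these classes sit in complementary eigenspaces and are independent; computing $F_i^{2}=0$, $\Theta\cdot F_i$ and $F_1\cdot F_2$ from the degrees of the $\pi_i$ identifies the rank-two lattice $L_{V_4}$.

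For the remaining groups I would feed the extra automorphisms into the same picture. Any element of $G$ of order greater than two fails to commute with $\tau$ and therefore induces a nonzero homomorphism $E_1\to E_2$; transporting the graph of such an isogeny to $JC$ produces a new class whose self-intersection records the degree of the isogeny, and this is precisely the mechanism that should yield the summands $(-4)$, $(-8)$ and $(-12)$. Running this group by group --- one extra class for $D_4$ and $D_6$, giving rank three, and two extra classes for $2D_6$ and $\tilde{S}_4$, where $E_1$ and $E_2$ acquire complex multiplication and the rank reaches four --- produces in each case an explicit sublattice of $\NS(JC)$ with the Gram matrix of $L_G$. Carrying out the self-intersection and degree bookkeeping so as to pin down the exact discriminants, rather than merely the forms up to a square factor, is the step I expect to be the most delicate.

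To see that the resulting sublattice is primitive I would realise its generators inside the eigenlattices $\ker(g^{*}-1)$ and $\ker(g^{*}+1)$ of suitable $g\in\Aut(C)$ acting on $\NS(JC)$: each such eigenlattice is saturated, because $\NS(JC)/\ker(g^{*}\mp 1)$ injects into $\NS(JC)$ via $x\mapsto g^{*}x\mp x$ and is hence torsion-free. It then remains to check, by a short computation with the discriminant group of $L_G$, that no nontrivial overlattice of $L_G$ embeds compatibly, so that $L_G$ is saturated in $\NS(JC)$.

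Finally, for the statement that $\NS(JC)\cong L_G$ when the two ranks agree, I would use that a primitive sublattice of the same rank as the ambient lattice must coincide with it; thus it suffices to prove that a very general $C\in\mathscr{F}_G$ satisfies $\operatorname{rank}\NS(JC)=\operatorname{rank} L_G$. I would bound the Picard number of an abelian surface by $\dim_\qq\bigl(\operatorname{End}(JC)\otimes\qq\bigr)^{\mathrm{sym}}$, the dimension of the Rosati-symmetric part of the endomorphism algebra, and compute this algebra for the very general Jacobian in each family, showing that it carries no isogenies beyond those forced by $G$ (none extra for $V_4$, $D_4$ and $D_6$, and exactly the relevant complex multiplication for $2D_6$ and $\tilde{S}_4$); a monodromy argument over the two-, one- and zero-dimensional families $\mathscr{F}_G$ gives the same conclusion. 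For $2D_6$ and $\tilde{S}_4$ the bound is automatic, since $\operatorname{rank}\NS$ of an abelian surface never exceeds four and we have already produced a primitive rank-four sublattice. Matching these counts with $\operatorname{rank}L_G$ forces the desired equality.
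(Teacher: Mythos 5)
Your overall architecture (split $JC$ up to isogeny, write down explicit classes, check saturation via discriminant groups, then count ranks for very general members) parallels the paper's strategy, but two of your steps have genuine gaps. The first is primitivity. Your eigenspace claim is false: the hyperelliptic involution acts as $(-1)$ on $JC$ and hence trivially on $\NS(JC)$, so $\tau^{*}$ and $(\sigma\tau)^{*}$ agree on $\NS(JC)$; moreover $\tau$ maps each of the two elliptic curves in $JC$ to itself, so $\tau^{*}$ fixes \emph{both} fiber classes --- indeed no automorphism can negate the class of an irreducible curve, since automorphisms preserve effectivity. More seriously, the saturation check you defer to ``a short computation with the discriminant group'' cannot be carried out by lattice theory alone. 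For $G=V_4$ the unique nontrivial even overlattice of $(2)\oplus(-2)$ is $U$, which is a perfectly legitimate N\'eron--Severi lattice of an abelian surface (a product of two non-isogenous elliptic curves); what excludes it here is that the relevant isotropic vector of the discriminant group is $\frac{1}{2}[E_{\sigma\tau}]$, half the class of an irreducible elliptic curve, and ruling such half-classes out of $\NS(JC)$ requires the geometric theorem of Kani (\cite{Ka}*{Theorem~1.1}) that the paper invokes. Without this input your primitivity argument does not close, except in the $D_6$ case where the discriminant is square-free.

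The second gap is the construction of the extra classes, which you propose to obtain as graphs of isogenies $E_1\to E_2$ ``transported'' to $JC$, postponing the bookkeeping --- but this is exactly where the content lies, and the transport is genuinely problematic: the isogeny $E_\tau\times E_{\sigma\tau}\to JC$ of Lemma~\ref{iso} has degree $4$, so under pullback $\NS(JC)$ corresponds to a proper finite-index sublattice of $\NS(E_1\times E_2)$; a graph class need not descend to an integral class on $JC$, and when it does its self-intersection is rescaled, so ``the degree of the isogeny'' does not directly produce the discriminants. (Symptomatically, you quote the summands $(-4),(-8),(-12)$, which belong to the K3 lattices $\Lambda_G$ of Table~\ref{tab:aut}; Theorem~\ref{NS} concerns Table~\ref{tab:fam}, whose summands are $(-2),(-4),(-6)$.) The paper avoids the isogeny altogether: each extra element of $G$ yields another non-hyperelliptic involution ($\eta\tau$, $\rho\tau$, \dots), hence by Lemma~\ref{ell} another elliptic curve lying directly inside $JC$, and the pairwise intersection numbers are computed by counting fixed points via Riemann--Hurwitz. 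Finally, your ``very general'' step (no isogenies or CM beyond those forced by $G$, via monodromy) is asserted rather than proved; the paper establishes it by a specialization argument: inside each irreducible family two non-isometric lattices of the next rank occur ($\Lambda_3\not\cong\Lambda_4$, then $\Lambda_1\not\cong\Lambda_2$), and since a primitive sublattice of full rank must equal the ambient lattice, the very general rank is forced to drop.
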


\begin{table}[h]
\begin{tabular}{l|l}
$G$  & $L_{G}$\\
\midrule
$V_4$  & $(2)\oplus (-2)$\\[2pt]
$D_4$ &  $U(2)\oplus (-2)$\\[2pt]
$D_6$ & $U\oplus(-6)$\\[2pt]
$2D_6$ & $U\oplus (-2)\oplus (-6)$\\[2pt]
$\tilde{S}_4$ & $U\oplus (-2)\oplus (-4)$
\end{tabular}
\caption{Automorphism groups and Picard lattices of Jacobian surfaces} \label{tab:fam}
\end{table}

\begin{lemma}
\label{ell}
Let $C$ be a smooth genus two curve
with a non-hyperelliptic 
involution $\gamma\in\Aut(C)$.
The subset $E_\gamma
=\{[p+\gamma(p) - K_C]: p\in C\}$ of $JC$
is an elliptic curve isomorphic to
$C/\langle\gamma\rangle$
which passes through the origin.
\end{lemma}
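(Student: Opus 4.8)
The plan is to exhibit $E_\gamma$ as the image of the genus one quotient $E'=C/\langle\gamma\rangle$ under the pullback homomorphism of the associated double cover $\pi\colon C\to E'$, and to read off all three assertions from standard facts about ramified double covers.

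First I would record the geometry of $\gamma$. Since $\gamma$ is a non-hyperelliptic involution of a genus two curve, Riemann--Hurwitz applied to $\pi\colon C\to E'$ shows that $E'$ has genus one and that $\pi$ is a double cover ramified at the two fixed points of $\gamma$; in particular $\pi$ is ramified with nonempty branch divisor $B\subset E'$. Writing $\sigma$ for the (central) hyperelliptic involution, the product $\gamma\sigma$ is again an involution, and it is non-hyperelliptic since it differs from $\sigma$; hence $\gamma\sigma$ has a fixed point $p_0$ (again by Riemann--Hurwitz its quotient has genus one, so it has exactly two fixed points). For such a point $\gamma(p_0)=\sigma(p_0)$, so $p_0+\gamma(p_0)=p_0+\sigma(p_0)$ is a fibre of the hyperelliptic pencil and therefore $p_0+\gamma(p_0)\sim K_C$.

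Next I would reinterpret the defining assignment. The map $p\mapsto[p+\gamma(p)-K_C]$ is $\gamma$-invariant, hence factors through a morphism $\bar\phi\colon E'\to JC$; moreover $p+\gamma(p)$ is precisely the scheme-theoretic fibre $\pi^*(\pi(p))$. Taking $o:=\pi(p_0)$ as the origin of $E'$, the relation $\pi^*(o)\sim K_C$ gives $\bar\phi(o)=0$, so $E_\gamma$ passes through the origin, and for every $e\in E'$ one gets $\bar\phi(e)=[\pi^*(e)-\pi^*(o)]=\pi^*([e-o])$. Thus, under the Abel--Jacobi isomorphism $E'\cong\Pic^0(E')$, $e\mapsto[e-o]$, the morphism $\bar\phi$ is identified with the pullback homomorphism $\pi^*\colon\Pic^0(E')\to\Pic^0(C)=JC$, a homomorphism of abelian varieties whose image is $E_\gamma$.

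The crux, and the step I expect to be the main obstacle, is the injectivity of $\pi^*$: this is exactly what promotes ``$E_\gamma$ is isogenous to $E'$'' to ``$E_\gamma\cong C/\langle\gamma\rangle$''. The identity $\pi_*\pi^*=[2]$ only yields $\ker\pi^*\subseteq E'[2]$, so I would use the projection formula $\pi_*\mathcal{O}_C=\mathcal{O}_{E'}\oplus\mathcal{L}^{-1}$ with $\mathcal{L}^{\otimes2}\cong\mathcal{O}_{E'}(B)$. If $\pi^*L\cong\mathcal{O}_C$ for some $L\in\Pic^0(E')$, then comparing $\pi_*\pi^*L=L\oplus(L\otimes\mathcal{L}^{-1})$ with $\pi_*\mathcal{O}_C$ forces either $L\cong\mathcal{O}_{E'}$ or $\mathcal{L}^{\otimes2}\cong\mathcal{O}_{E'}$, and the latter is impossible because $\deg B>0$. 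Hence $\pi^*$ is injective; being a nonzero injective homomorphism out of a one-dimensional abelian variety, it is a closed immersion onto its image. Therefore $E_\gamma$ is an elliptic curve isomorphic to $E'=C/\langle\gamma\rangle$ and contains the origin, completing the proof.
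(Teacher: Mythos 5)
Your proof is correct, and its treatment of the origin --- producing a fixed point $p_0$ of the non-hyperelliptic involution $\sigma\gamma$ and using $p_0+\gamma(p_0)=p_0+\sigma(p_0)\sim K_C$ --- is exactly the paper's argument. Where you genuinely diverge is in how the isomorphism $E_\gamma\cong C/\langle\gamma\rangle$ is established. The paper argues geometrically: it asserts that the morphism $C\to JC$, $p\mapsto[p+\gamma(p)-K_C]$, is a double cover of its image factoring through $C/\langle\gamma\rangle$, so that the induced map $C/\langle\gamma\rangle\to E_\gamma$ has degree one; making that assertion rigorous requires the (unstated) Riemann--Roch observation that $p+\gamma(p)\sim q+\gamma(q)$ with $\{p,\gamma(p)\}\neq\{q,\gamma(q)\}$ forces $p+\gamma(p)\sim K_C$, which happens only over finitely many points. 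You instead identify the induced map, via the choice of origin $o=\pi(p_0)$ with $\pi^*(o)\sim K_C$, with the pullback homomorphism $\pi^*\colon\Pic^0(C/\langle\gamma\rangle)\to JC$, and prove $\pi^*$ injective from the splitting $\pi_*\mathcal{O}_C=\mathcal{O}\oplus\mathcal{L}^{-1}$ with $\deg\mathcal{L}>0$ (Krull--Schmidt, or simply the degrees of the summands). Your route buys two things the paper leaves implicit: smoothness of $E_\gamma$ is automatic, since the image of a homomorphism of abelian varieties is an abelian subvariety, and the degree-one claim becomes an instance of the general fact that a \emph{ramified} double cover has no nontrivial kernel on $\Pic^0$; the paper's route is shorter and stays at the level of divisors on $C$. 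One minor caveat: your final step, that an injective homomorphism of abelian varieties is a closed immersion onto its image, uses characteristic zero (so that the kernel group scheme is reduced); this is harmless in the paper's setting, where the same hypothesis is implicit.
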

\begin{proof}
The curve $E_\gamma$ is the image of the 
morphism $C\to JC$ defined by 
$p\mapsto [p+\gamma(p)-K_C]$. Since 
this morphism is a double cover which 
factors through $C/\langle\gamma\rangle$
and $\gamma$ is not the hyperelliptic
involution, the curve $E_\gamma\cong C/\langle\gamma\rangle$ 
is elliptic.
The involution $\sigma\gamma$ has
two fixed points exchanged by $\gamma$,
let us call them $p,\gamma(p)$. 
The identity $[p+\gamma(p)-K_C]
=[p+\sigma(p)-K_C]=0$ shows 
that $E_\gamma$ passes through
the origin.
\end{proof}

Observe that by Lemma~\ref{ell} the curves
$E_1$ and $E_2$ of~\eqref{E12} are isomorphic
to $E_\tau$ and $E_{\sigma\tau}$ respectively.
We introduce the isogeny:
\begin{equation}
\label{isog}
 \xymatrix{
  E_\tau\times E_{\sigma\tau}\ar[r]^-\alpha & JC
  \qquad
  (x,y)\mapsto x+y.
 }
\end{equation}

\begin{lemma}\label{iso}
With the above notation, the following hold:
\begin{enumerate}
\item \label{quapro}
$E_\tau^2=E_{\sigma\tau}^2=0$ and $E_\tau\cdot E_{\sigma\tau}=4$;
\item \label{isoker}
the isogeny $\alpha$ has degree $4$ and kernel isomorphic to $V_4$;
\item \label{theequ}
the divisor $E_{\tau}+E_{\sigma\tau}$ is linearly equivalent
to twice the theta divisor on $JC$.
\end{enumerate}
\end{lemma}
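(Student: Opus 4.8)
The plan is to exploit the action of the automorphisms of $C$ on $JC$ together with the combinatorics of the six Weierstrass points. Throughout I regard $\tau$ and $\sigma$ as acting on $JC=\operatorname{Pic}^0(C)$ by pushforward; since $\sigma$ is the hyperelliptic involution one has $\sigma_*=[-1]$, while $\tau_*=\tau^*$ is an involution fixing the origin. By Lemma~\ref{ell} the curves $E_\tau$ and $E_{\sigma\tau}$ are the images of the pullback homomorphisms $\pi_1^*$ and $\pi_2^*$, hence abelian subvarieties of $JC$ through the origin; as such, each is disjoint from a generic translate of itself, so $E_\tau^2=E_{\sigma\tau}^2=0$. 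A direct check on the defining formula shows that $\tau_*$ fixes $E_\tau$ pointwise (for $x=[p+\tau p-K_C]$ one gets $\tau_*x=[\tau p+p-K_C]=x$), while $(\sigma\tau)_*=-\tau_*$ fixes $E_{\sigma\tau}$ pointwise; equivalently $\tau_*$ acts as $+1$ on $E_\tau$ and as $-1$ on $E_{\sigma\tau}$. Consequently the tangent lines of the two curves at the origin are the two distinct eigenlines of $d\tau_*$, so $E_\tau\neq E_{\sigma\tau}$, the curves meet transversally, and the addition map $\alpha$ is a surjective isogeny whose kernel is isomorphic to $E_\tau\cap E_{\sigma\tau}$ via $x\mapsto(x,-x)$.

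Next I would identify this intersection. If $x\in E_\tau\cap E_{\sigma\tau}$ then $x=\tau_*x=-x$, so $E_\tau\cap E_{\sigma\tau}\subseteq JC[2]$. To pin it down exactly I pass to the Weierstrass points $w_1,\dots,w_6$, on which $\tau$ and $\sigma\tau$ act by the same permutation: since $\sigma$ is central, $\tau$ permutes the $w_i$, and a Riemann--Hurwitz count for the $V_4$-cover $C\to C/\langle\sigma,\tau\rangle\cong\pp^1$ forces $\tau$ (hence $\sigma\tau$) to fix no Weierstrass point, i.e.\ to act as a product of three transpositions, say $(w_1w_2)(w_3w_4)(w_5w_6)$. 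Using the standard description $JC[2]=\{[w_i-w_j]\}\cup\{0\}$ and the fact that $w_i+w_j\not\sim K_C$ for $i\neq j$, one checks that $[w_i-w_j]=[w_i+w_j-K_C]$ lies on $E_\gamma$ exactly when $\gamma$ swaps $w_i$ and $w_j$. Hence both $E_\tau[2]$ and $E_{\sigma\tau}[2]$ equal $\{0,[w_1-w_2],[w_3-w_4],[w_5-w_6]\}$, a copy of $V_4$, so $E_\tau\cap E_{\sigma\tau}=E_\tau[2]\cong V_4$. This proves~(\ref{isoker}), and by transversality $E_\tau\cdot E_{\sigma\tau}=\#(E_\tau\cap E_{\sigma\tau})=4$, completing~(\ref{quapro}).

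For~(\ref{theequ}) I would compute the intersection numbers against a theta divisor $\Theta$, recalling $\Theta^2=2$. Because $\pi_1$ is a \emph{ramified} double cover, $\pi_1^*\colon E_1\to JC$ is injective, so $E_\tau\cong E_1$ and the polarization induced on $E_\tau$ by $\Theta$ is $\widehat{\pi_1^*}\circ\lambda_\Theta\circ\pi_1^*=\pi_{1*}\circ\pi_1^*=[2]$; this isogeny of $E_1$ has degree $4=2^2$, whence $\Theta\cdot E_\tau=2$, and similarly $\Theta\cdot E_{\sigma\tau}=2$. Setting $D=E_\tau+E_{\sigma\tau}-2\Theta$ and inserting the numbers just found gives $D\cdot\Theta=0$ and $D^2=0$. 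Since $\Theta$ is ample, the Hodge index theorem forces $D\equiv0$, so $E_\tau+E_{\sigma\tau}\equiv2\Theta$ numerically, and on an abelian surface numerical and algebraic equivalence agree, so $D\in\Pic^0(JC)$.

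I expect the passage from this numerical equivalence to an honest linear equivalence in~(\ref{theequ}) to be the main obstacle: a priori $D$ is only a two-torsion class in $\Pic^0(JC)$, since all three divisors are symmetric and $[-1]^*$ inverts $\Pic^0$. To remove this ambiguity I would either fix the symmetric theta divisor adapted to the configuration and compare the two sides on the sixteen two-torsion points (equivalently, on the Kummer surface via $|2\Theta|$), or argue directly that the symmetric effective divisor $E_\tau+E_{\sigma\tau}$ lies in the three-dimensional, base-point-free system $|2\Theta|$. The combinatorial Weierstrass-point analysis behind~(\ref{quapro}) and~(\ref{isoker}) is the technical heart of the argument: once the intersection is identified with $V_4$, parts~(\ref{quapro}) and~(\ref{isoker}) are immediate and~(\ref{theequ}) reduces to the routine Hodge-index computation above.
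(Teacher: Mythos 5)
Your parts (i) and (ii) are essentially correct, and at bottom they run on the same fuel as the paper's proof: the free action of $\tau$ on the six Weierstrass points together with the fact that a non-canonical degree-two class on a genus-two curve has a unique effective representative. You organize this through the common $2$-torsion ($E_\tau\cap E_{\sigma\tau}\subseteq JC[2]$, then match $E_\tau[2]=E_{\sigma\tau}[2]=\{0,[w_1-w_2],[w_3-w_4],[w_5-w_6]\}$), where the paper counts solutions of $p+\tau(p)\sim q+\sigma\tau(q)$ directly; the two routes are interchangeable. One point to repair: your justification that $\tau$ fixes no Weierstrass point by ``a Riemann--Hurwitz count for the $V_4$-cover'' does not work as stated. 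If a Weierstrass point had stabilizer all of $V_4$ it would contribute $3$ to the ramification divisor, and the total ramification degree of $C\to C/\langle\sigma,\tau\rangle$ would still be $10$, so the count alone excludes nothing. The correct reason (the one the paper invokes when it says $\tau$ fixes no Weierstrass point ``since the curve $C$ is smooth'') is that the stabilizer of a point of a smooth curve acts faithfully on the one-dimensional tangent space and is therefore cyclic, whereas $V_4$ is not.

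The genuine gap is in part (iii), and you flag it yourself. Your Hodge-index argument (with $\Theta\cdot E_\tau=\Theta\cdot E_{\sigma\tau}=2$, which you obtain correctly from $\pi_{1*}\pi_1^*=[2]$, where the paper instead intersects with a translate $C_q$ of $\Theta$ and checks transversality) only gives $D:=E_\tau+E_{\sigma\tau}-2\Theta\in\Pic^0(JC)$, and symmetry only improves this to $2D=0$; a priori $D$ is any of the sixteen $2$-torsion classes, and the assertion to be proved is precisely $D=0$. Neither of your proposed remedies is carried out, and the second one --- ``argue directly that $E_\tau+E_{\sigma\tau}$ lies in $|2\Theta|$'' --- is a restatement of (iii), not a strategy for proving it. Note also that restricting to the two elliptic curves cannot settle the ambiguity, since the kernel of $\widehat{\alpha}\colon\Pic^0(JC)\to\Pic^0(E_\tau)\times\Pic^0(E_{\sigma\tau})$ is dual to $\ker\alpha$ and is again a copy of $V_4$ inside the $2$-torsion. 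The paper closes exactly this gap geometrically: since $E_\tau\cdot 2\Theta=E_{\sigma\tau}\cdot 2\Theta=4$ and both curves are stable under $[-1]$, their images under the Kummer map $\varphi_{|2\Theta|}\colon JC\to\pp^3$ are two conics; these conics pass through the images of the four points of $E_\tau\cap E_{\sigma\tau}$ (which are $2$-torsion, hence nodes of the Kummer quartic), and two conics in $\pp^3$ with four common points must be coplanar, because otherwise their common points would lie on the line where their two planes meet, giving at most two. Pulling back that plane produces a divisor in $|2\Theta|$ containing $E_\tau+E_{\sigma\tau}$, and equality follows by intersecting with the ample class $\Theta$. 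You need this (or an equivalent theta-characteristic computation) to finish; as written, part (iii) establishes algebraic equivalence only.
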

\begin{proof}
Since $E_\tau$ and $E_{\sigma\tau}$ are elliptic curves in $JC$,
they both have self-intersection zero.
Moreover $E_\tau$ and $E_{\sigma\tau}$ intersect
transversally, since two elliptic curves meeting
non transversally in an abelian variety
coincide; in particular the intersection number $E_\tau\cdot E_{\sigma\tau}$ equals
the cardinality of $E_\tau\cap E_{\sigma\tau}$. This set
consists of the origin plus the set 
of solutions of the equation
$p+\tau(p)=q+\sigma\tau(q)$ for $p,q\in C$.
The last set contains the elements 
of the form $p+\tau(p)$, where $p\in {\rm Fix}(\sigma)$.
These give exactly three points 
since $\tau$ acts with three orbits of cardinality two
on the set  of Weierstrass points of $C$. 
Thus $E_\tau\cdot E_{\sigma\tau}=4$,
proving~\eqref{quapro}.

To show~\eqref{isoker} observe that 
if $(x,y)$ is in $\ker(\alpha)$, then 
$[p+\tau(p)-K_C]=x=-y=[\sigma(q)+\sigma^2\tau(q)-K_C]
=[q'+\sigma\tau(q')-K_C]$.
Thus the equation has four
solutions corresponding to 
the four points in $E_\tau\cap E_{\sigma\tau}$.
This proves~\eqref{isoker}.

Now we show~\eqref{theequ}. 
First of all, we calculate the intersection
of a theta divisor $\Theta$ with $E_\tau$ and
$E_{\sigma\tau}$. Let $q\in C$ be a fixed
point for the automorphism $\tau\in\Aut(C)$:
We denote by $C_q:=\{[p-q] : p\in C\}
=\{[p+\sigma(q)-K_C] : p\in C\}$
and observe that $C_q$ is numerically equivalent
to $\Theta$. By a similar argument as above
the set $E_\tau\cap C_q = \{[p+\tau(p)-K_C] : p\in{\rm Fix}(\tau)\}$ 
has cardinality two. On the one hand  
$\tau^*|_{E_\tau}={\rm id}$
and thus $\tau^*$ acts as the identity
on the tangent space of $E_\tau$ at
all of its points. On the other hand
$\tau^*|_{C_q}=\tau$ acts as $-{\rm id}$ on the
tangent space of $C_q$ at each 
fixed point of $\tau$. Thus we conclude
that the intersection $E_\tau\cap C_q$
is transverse so that $E_\tau\cdot C_q=2$ and
similarly one shows $E_{\sigma\tau}\cdot C_q=2$.
We deduce that the images of $E_\tau$ and
$E_{\sigma\tau}$ under the Kummer map
$\varphi_{|2\Theta|}\colon JC\to\pp^3$
are two conics.
By~\eqref{quapro} the two conics intersect
at four points since the four intersection
points of $E_\tau\cap E_{\sigma\tau}$
are fixed by $\sigma$. Thus the two conics
are coplanar and hence $2\Theta\sim
E_\tau+E_{\sigma\tau}$.

\end{proof}

\begin{proof}[Proof of Theorem~\ref{NS}]
For each group $G$ in Table~\ref{tab:fam},
we compute the  N\'eron-Severi group of 
a very general curve $C$ in $\mathcal F_G$
(see Remark~\ref{vg}).

\subsubsection*{Case~1: $G= V_4$}
In this case, $\Aut(C) = \{ id , \tau , \sigma , \tau\sigma \}$,
where the hyperelliptic involution is
$\sigma$.
By Lemma~\ref{iso} the N\'eron-Severi lattice $\NS(JC)$ 
contains the sublattice $\Lambda_0$ generated 
by the classes of $\frac{1}{2}(E_\tau-E_{\sigma\tau})$
and $\frac{1}{2}(E_\tau+E_{\sigma\tau})\equiv\Theta$.
Since $E_\tau\cdot E_{\sigma\tau}=4$, the lattice $\Lambda_0$ is 
isometric to the lattice $A_1\oplus A_1(-1)$.
The discriminant group of 
$\Lambda_0$ is isomorphic to 
$\zz/2\zz\oplus \zz/2\zz$ with quadratic form 
${\rm diag}(-\frac{1}{2},\frac{1}{2})$, therefore 
the only proper overlattice of $\Lambda_0$ 
of rank two is obtained adding 
the class of $\frac{1}{2}E_{\sigma\tau}$.
This possibility is ruled out 
by~\cite{Ka}*{Theorem~1.1},
so that $\Lambda_0$ is a primitive
sublattice of $\NS(JC)$ of rank two.

\subsubsection*{Case~2: $G= D_4$}
In this case, $\Aut(C)=\langle\tau,\eta\mid \eta^4,\tau^2,(\tau\eta)^2\rangle$,
where the hyperelliptic involution is
$\sigma=\eta^2$.
Observe that the automorphism $\eta$ 
induces an isomorphism 
between $E_{\tau}$ and $E_{\sigma\tau}$.
According to Lemma~\ref{ell}
the involution $\eta\tau\in\Aut(C)$ defines
the elliptic curve $E_{\eta\tau}$ in $JC$.
The intersection $E_\tau\cap E_{\eta\tau}$
consists of the origin plus the points $[p+\tau(p)-K_C]$,
where $p\in {\rm Fix}(\eta)$. The fixed points
of $\eta$ are contained in the Weierstrass 
points ${\rm Fix}(\sigma)$ of $C$.
By applying the Riemann-Hurwitz formula
to the degree four cyclic covering
$C\to C/\langle\eta\rangle\cong\pp^1$
we deduce that $\eta$ has exactly two
fixed points exchanged by $\tau$.
Thus we deduce $E_\tau\cdot E_{\eta\tau}=2$
and similarly one shows 
$E_{\sigma\tau}\cdot E_{\eta\tau}=2$.
The sublattice $\Lambda_1\subseteq\NS(JC)$ 
generated by the classes of $E_\tau$, 
$E_{\eta\tau}$ and $\Theta-E_\tau-E_{\eta\tau}$
 is isometric to $U(2)\oplus (-2)$.
The only isotropic and non-zero vectors 
in the discriminant group of 
$\Lambda_1$ are $\frac{1}{2} E_\tau$
and $\frac{1}{2} E_{\eta\tau}$.
These do not belong to $\NS(JC)$ by~\cite{Ka}*{Theorem~1.1},
thus $\Lambda_1$ is s primitive
sublattice of $\NS(JC)$ of rank three.

\subsubsection*{Case~3: $G= D_6$}
In this case, $\Aut(C)=\langle\tau,\rho\mid \rho^6,\tau^2,(\tau\rho)^2\rangle$,
where the hyperelliptic involution is
$\sigma=\rho^3$.
According to Lemma~\ref{ell}
the involution $\rho\tau\in\Aut(C)$ defines
the elliptic curve $E_{\rho\tau}$ in $JC$.
The intersection $E_\tau\cap E_{\rho\tau}$
consists of the origin plus the points $[p+\tau(p)-K_C]$,
where $p\in {\rm Fix}(\rho)$. The fixed points
of $\rho$ are contained in the Weierstrass 
points of $C$. 
By applying the Riemann-Hurwitz formula
to the degree six cyclic covering
$C\to C/\langle\rho\rangle\cong\pp^1$
we deduce that $\rho$ has no
fixed points. Thus $E_\tau\cdot E_{\rho\tau}=1$.
The intersection $E_{\sigma\tau}\cap E_{\rho\tau}$
consists of the origin plus the points $[p+\tau(p)-K_C]$,
where $p\in {\rm Fix}(\rho^2)$. 
By applying the Riemann-Hurwitz formula
to the degree three cyclic covering
$C\to C/\langle\rho^2\rangle\cong\pp^1$
we deduce that $\rho^2$ has four
fixed points exchanged in pairs by $\sigma\tau$.
Thus $E_{\sigma\tau}\cdot E_{\rho\tau}=3$.
The sublattice $\Lambda_2\subseteq\NS(JC)$ 
generated by the classes of $E_\tau$, 
$E_{\rho\tau}$ and $\Theta-2E_\tau-2E_{\rho\tau}$
is isometric to $U\oplus (-6)$.
Since $\det(\Lambda_2)$ is square free, 
we conclude that $\Lambda_2$ is s primitive
sublattice of $\NS(JC)$ of rank three.

\subsubsection*{Case~4: $G= 2D_6$}
In this case, $\Aut(C)=\langle \tau,\rho,\eta 
\mid \tau^2, \eta^2\rho^3, \eta^4, (\rho\tau)^2, 
(\tau\eta)^2, \allowbreak \eta^{-1}\rho\eta\rho\rangle$,
where $\sigma=\rho^3=\eta^2$ is the hyperelliptic involution.
Observe that $\langle\tau,\eta\rangle\cong D_4$
and $\langle\tau,\rho\rangle\cong D_6$.
Both $E_\tau$ and $E_{\sigma\tau}$ admit
complex multiplication of order three induced by 
the automorphism $\rho^3$, thus we obtain 
${\rm Hom} (E_\tau,E_{\sigma\tau})\cong \zz^2$,
so that $\NS(E_\tau\times E_{\sigma\tau})$ has rank four.
Let  $\Lambda_3\subset \NS(JC)$ be generated 
by the classes of $E_\tau$,  $E_{\rho\tau}$, $\Theta- E_{\eta\tau}$ and
$\Theta-2E_\tau-2E_{\rho\tau}$.
The only intersection number we need to compute 
is the one between $E_{\eta\tau}$ and $E_{\rho\tau}$ 
since the other ones appear in Case 2 and Case 3.
The intersection $E_{\eta\tau}\cap E_{\rho\tau}$
consists of the origin plus the points $[p+\eta\tau(p)-K_C]$,
where $p\in {\rm Fix}(\rho\eta^3)$. 
Observe that $(\rho\eta^3)^2=\sigma$, 
so that its fixed points are contained in the Weierstrass 
points of $C$. As in Case 2, we deduce that 
it has exactly two fixed points which are exchanged in pairs
by $\eta\tau$. Thus $E_{\eta\tau}\cdot E_{\rho\tau}=2$.
The lattice $\Lambda_3$ is isometric to $U\oplus (-2)\oplus (-6)$.
The only isotropic and non-zero vector in the discriminant 
group of $\Lambda_3$ is $\frac{1}{2}E_{\eta\tau}$.
This does not belong to $\NS(JC)$ by~\cite{Ka}*{Theorem~1.1},
thus $\Lambda_3=\NS(JC)$ since
$\Lambda_3$ is s primitive
sublattice of $\NS(JC)$ of the 
same rank.
Observe that an equation for $C$
in this case is $y^2=x^6-1$.

\subsubsection*{Case~5: $G=\tilde S_4$}
In this case, $\Aut(C)=\langle \tau,\rho,\eta 
\mid \tau^2, \eta^2\rho^3, \eta^4, (\rho\tau)^2, 
(\tau\eta)^2, \allowbreak (\rho\eta)^3\rangle$,
where $\sigma=\rho^3=\eta^2$
is the hyperelliptic involution.
As in the previous case
we only need to compute the intersection 
number between $E_{\eta\tau}$ and $E_{\rho\tau}$.
The intersection $E_{\eta\tau}\cap E_{\rho\tau}$
consists of the origin plus the points $[p+\eta\tau(p)-K_C]$,
where $p\in {\rm Fix}(\rho\eta^3)$.
In this case $(\rho\eta^3)^3=\sigma$, thus we deduce as 
in Case 3 that it has no fixed points.
Thus $E_{\eta\tau}\cdot E_{\rho\tau}=1$.
Let  $\Lambda_4\subset \NS(JC)$ be 
the lattice generated by the classes of 
$E_\tau$,  $E_{\rho\tau}$, 
$\Theta- E_\tau-E_{\eta\tau}$ and $E_\tau-E_{\eta\tau}+2E_{\rho\tau}$,
which is isometric to $U\oplus (-2)\oplus (-4)$.
Since the discriminant group of $\Lambda_4$
has no non-zero isotropic vectors, we conclude that 
$\Lambda_4=\NS(JC)$ since $\Lambda_4$ is s primitive
sublattice of $\NS(JC)$ of the same rank.
Observe that an equation for $C$
in this case is $y^2=x^5-x$.\\

To conclude our proof observe that 
since $\Lambda_3$ and $\Lambda_4$
are non-isometric lattices of rank four
and both of them are N\'eron-Severi lattices of
Jacobian of curves of the irreducible family given 
in Case~3, the very general curve 
$C$ of this irreducible family must have
$\NS(JC)$ of rank three. Hence we
deduce the equality $\Lambda_2=\NS(JC)$ for 
such a curve $C$ since $\Lambda_2$ is a 
primitive sublattice of $\NS(JC)$ of
the same rank. Similarly, we deduce 
the equality $\Lambda_1=\NS(JC)$ for the very 
general curve $C$ in Case~2.
Since $\Lambda_1$ and $\Lambda_2$ 
are non-isometric lattices and the family
of curves in Case~1 is irreducible, we
deduce by a similar argument the equality 
$\Lambda_0=\NS(JC)$ for the very 
general curve $C$ in Case~1.
\end{proof}

\begin{remark}
\label{vg}
Observe that the ranks of the N\'eron-Severi 
groups of $JC$ and 
$E_\tau\times E_{\sigma\tau}$ coincide, 
since the two abelian surfaces are isogenous.
The generality conditions on $C$ in Theorem~\ref{NS}
can be made explicit using the formula
\begin{equation}
\label{kani-rk}
{\rm rk}(\NS(JC))= {\rm rk}(\NS(E_\tau\times E_{\sigma\tau}))
 =
 {\rm rk}( {\rm Hom}(E_\tau,E_{\sigma\tau}))+2.
\end{equation}
For a proof of this formula see~\cite{Ka1}*{Proposition~22}.
Thus the rank of $\NS(JC)$ is two if and only if the two elliptic curves
$E_{\tau}$ and $E_{\sigma\tau}$ are non-isogenous
and the rank of $\NS(JC)$ is three if and only if the two elliptic curves
$E_{\tau}$ and $E_{\sigma\tau}$ are isogenous and
do not have complex multiplication.
\end{remark}

\section{Kummer surfaces} \label{se-ks}
Let $K$ be a field of characteristic different from two and denote by 
$\mathbb{A}^2_{t,s}$ the affine plane over $K$ with affine coordinates $t,s$.  
Let $\mathscr{C} \to \mathbb{A}^2_{t,s}$ be the projective family of genus
two curves given by the affine equation
\[
 \mathscr{C} \colon
 \quad
 y^2 = x(x-2t)(x-2s)\Big(x-\dfrac{1}{t}\Big)\Big(x-\dfrac{1}{s}\Big) .
\]
Setting $\gamma_1 = 2s^2-1$, $\gamma_2 = t-s$, $\gamma_3 = 2ts-1$ and $\gamma_4 = 2t^2-1$, we 
find that the family $\mathscr{C}$ is smooth over the open subset of $\mathbb{A}^2_{t,s}$ 
where $s t \gamma_1 \gamma_2 \gamma_3 \gamma_4 \neq 0$.  
Observe that the automorphism group of 
the family contains the hyperelliptic involution
$\sigma$ and an involution $\tau$ defined by
$(x,y)\mapsto \bigl( \frac{2}{x}, \frac{2 \sqrt{2} y}{x^3} \bigr)$.
Conversely, let $C$ be a smooth genus two curve whose automorphism
group $\Aut(C)$ contains the hyperelliptic involution
$\sigma$ and an involution $\tau\neq\sigma$.
The action of $\tau$ descends to an action of $\zz/2\zz
=\langle\bar\tau\rangle$
on the quotient curve $C/\langle\sigma\rangle\cong\pp^1$.
The automorphism $\tau$ does not fix any Weierstrass
point since the curve $C$ is smooth.
After a linear change of coordinates we can assume
that $\bar\tau$ is the map $x\mapsto\frac{2}{x}$
and that the images of the six Weierstrass points of
$C$ in $\pp^1$ are $\{0,\infty,2t,1/t,2s,1/s\}$ for some
$t,s\in\mathbb{C}$. 
Hence $C$ appears in the family $\mathscr{C}$.

\subsection{Klein form and automorphisms}
Let $\mathscr{X} \to \mathbb{A}^2_{t,s}$ be the relative minimal resolution 
of the quartic Kummer surface associated to the relative Jacobian $J\mathscr{C}$.  
The equations of $\mathscr{X}$ are 
\[
\mathscr{X} \colon \quad
\left\{
\begin{array}{r@{}r@{}r@{}r@{}rcr}
x_1^2 & - 2t^2x_2^2 & + \gamma_4 x_3^2 & & & = & 2 \gamma_4 x_0^2 \\[5pt]
& t^2sx_2^2 & - ts\gamma_2 x_3^2 & - ts^2x_4^2 & & = & - \gamma_2 x_0^2 \\[5pt]
& & \gamma_1 x_3^2 & -2s^2 x_4^2 & + x_5^2 & = & 2 \gamma_1 x_0^2 .
\end{array}
\right.
\]
These equations are obtained from the 
Klein equations for the resolution of the 
Kummer surface of the Jacobian of a genus two curve of 
the form $y^2=f_5(x)$ (see~\cite{PV}*{\S 5.3,~(36)}).
The surfaces in the family $\mathscr{X}$ contain exactly 
$32$ lines: they are cut out by the four hyperplane sections
with equations $x_1\pm x_5\pm 2 \gamma_2 x_0=0$.
Let $\ell \subset \mathscr{X}$ denote the family of lines with equations
\[
 \ell\colon\quad
 \left\{ 
 \begin{array}{rcl}
 x_2 & = & x_1 - \frac{1}{t} \gamma_4 x_0\\[3pt]
 x_3 & = & x_1 - 2t x_0\\[3pt]
 x_4 & = & x_1 - \frac{1}{s} \gamma_3 x_0\\[3pt]
 x_5 & = & x_1 - 2 \gamma_2 x_0 .
\end{array}
\right.
\]
For $(t,s) = (-1,1)$ the line $\ell$ contains 
the arithmetic progressions mentioned in the introduction.  
Denote by $[6]$ the set $\{0,\ldots,5\}$.  For each subset $\mathscr{P} \subset [6]$, denote by 
$\sigma_{\mathscr{P}}$ the automorphism of the family $\mathscr{X}$ that changes the sign 
of all the variables with indices in $\mathscr{P}$; clearly we have 
$\sigma_{\mathscr{P}} = \sigma_{[6] \setminus \mathscr{P}}$.  To simplify the notation, we sometimes 
denote the involution $\sigma_{\{a_1,\ldots,a_r\}}$ by $\sigma_{a_1 \cdots a_r}$.
It is straightforward to check that the 
lines corresponding to two distinct partitions $\mathscr{P} \sqcup \mathscr{Q}$ and 
$\mathscr{P}' \sqcup \mathscr{Q}'$ of $[6]$ intersect if and only if the automorphism 
$\sigma_{\mathscr{P}} \sigma_{\mathscr{P}'}$ is the sign change of a single variable.
The lines corresponding to the 16 two-set partitions with even size are pairwise non-adjacent, 
and similarly for the 16 partitions into sets of odd size.  Hence, the intersection graph of the 
lines on $\mathscr{X}$ is connected bipartite of type $(16,16)$, and it is regular of valence~6.
Observe that one set of pairwise non-adjacent lines is the set of the exceptional divisors 
of the resolution $X\to {\rm Kum}(JC)$, while the other set contains the images of 
the $16$ translates of a theta divisors in $JC$.

Let $\Sigma$ be the group of sign changes
of the coordinates: the group $\Sigma$ is isomorphic to $(\zz/2\zz)^5$ and
is a subgroup of the linear automorphism
group of any surface in the family $\mathscr{X}$.

\begin{proposition}\label{autofam}
Let $X\subset\pp^5$ be a fiber in the family
$\mathscr{X}$ and let $C$ be the corresponding
genus two curve.  The linear automorphism group of $X$
is isomorphic to a semidirect product of 
$\Aut(C)/\langle\sigma\rangle$ with $\Sigma$.
\end{proposition}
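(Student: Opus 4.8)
The plan is to use Theorem~\ref{aut} to reduce the entire computation to a question about permutations of the six coordinates. Each fiber $X$ is the smooth complete intersection in $\pp^5$ of the three \emph{diagonal} quadrics displayed in the definition of $\mathscr{X}$; it is non-degenerate of codimension three, so Theorem~\ref{aut} applies and every linear automorphism of $X$ is a scalar permutation. Recording only the underlying permutation of $x_0,\dots,x_5$ yields a homomorphism $\pi\colon\Aut_{\mathrm{lin}}(X)\to S_6$, and I claim its kernel is exactly $\Sigma$. Indeed, a diagonal scalar permutation $x_i\mapsto\lambda_i x_i$ preserves $X$ if and only if it maps the linear span of the three quadrics to itself. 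Since $x_1$ occurs only in the first equation and $x_5$ only in the third, the transform of each of these two quadrics must be a scalar multiple of itself; comparing coefficients forces $\lambda_0^2=\dots=\lambda_3^2$ from the first equation and $\lambda_0^2=\lambda_3^2=\lambda_4^2=\lambda_5^2$ from the third, hence all $\lambda_i^2$ agree. Rescaling, $\lambda_i\in\{\pm1\}$, so $\ker\pi=\Sigma$ and in particular $\Sigma$ is normal.

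Next I would produce a complement. The quotient $\Aut(C)/\langle\sigma\rangle$ acts on $JC$ by origin-fixing group automorphisms (by Torelli, $\Aut(JC)\cong\Aut(C)$ with the hyperelliptic involution $\sigma$ acting as $-1$), and this action descends to the Kummer surface and hence to $X$; as it preserves the principal polarization it is linear for the given embedding, so by Theorem~\ref{aut} it too is realised by scalar permutations. This action is faithful, since the only automorphisms of $JC$ inducing the identity on $JC/\{\pm1\}$ are $\pm1$. Composing with $\pi$ gives a map $\Aut(C)/\langle\sigma\rangle\to S_6$ which is injective: a non-trivial class would otherwise lie in $\ker\pi=\Sigma$, but every element of $\Sigma$ is either a translation by a point of $JC[2]$ or exchanges the two sets of sixteen lines, whereas a non-trivial origin-fixing linear automorphism of $JC$ fixes the node over the origin and preserves the exceptional set. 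Thus $\Aut(C)/\langle\sigma\rangle$ maps isomorphically onto a subgroup $P\subseteq S_6$ and lifts to a subgroup of $\Aut_{\mathrm{lin}}(X)$ meeting $\Sigma$ trivially, that is, a splitting of $\pi$ over $P$.

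It remains to prove the reverse inclusion $\pi(\Aut_{\mathrm{lin}}(X))\subseteq P$, which is the main obstacle. For this I would argue geometrically with the thirty-two lines. Their intersection graph is connected and bipartite of type $(16,16)$, the two sides being the exceptional curves of $X\to\Km(JC)$ and the images of the sixteen translates of the theta divisor; any linear automorphism induces a graph automorphism and therefore either preserves or interchanges the two sides. Since $\Sigma$ already contains an element realising the interchange (a sign change on an odd number of coordinates), after multiplying by a suitable element of $\Sigma$ we may assume our automorphism fixes the exceptional set. Such an automorphism contracts to an automorphism of $\Km(JC)$ fixing the image of the origin and permuting the sixteen nodes; as these nodes are the images of $JC[2]$, it lifts to an affine automorphism $x\mapsto Ax+b$ of $JC$ with $b\in JC[2]$. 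Composing with the translation by $b$, which lies in $\Sigma$, we reduce to the linear part $A\in\Aut(JC)\cong\Aut(C)$, whose class modulo $\langle\sigma\rangle$ lies in $P$ by construction. Hence $\pi(\Aut_{\mathrm{lin}}(X))=P$.

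Combining the three steps, $\Aut_{\mathrm{lin}}(X)$ is the extension of $P\cong\Aut(C)/\langle\sigma\rangle$ by the normal subgroup $\Sigma$, and the splitting constructed above exhibits it as the semidirect product $\Sigma\rtimes\bigl(\Aut(C)/\langle\sigma\rangle\bigr)$, as claimed. The delicate point throughout is the descent-and-lifting step: one must know that an automorphism of the Kummer surface fixing the node set genuinely lifts to an affine automorphism of the abelian surface, and that Torelli identifies its linear part with an automorphism of $C$ modulo the hyperelliptic involution; this, together with locating the node--trope interchange inside $\Sigma$, is precisely what prevents any spurious coordinate permutation from occurring.
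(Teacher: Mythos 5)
Your overall strategy---use Theorem~\ref{aut} to see that every linear automorphism is a scalar permutation, identify $\Sigma$ as the kernel of the resulting map to $S_6$, normalize by $\Sigma$ using the $32$ lines, descend to $JC$, and invoke Torelli---is essentially the paper's proof, and your explicit coefficient computation showing $\ker\pi=\Sigma$ is a clean substitute for the paper's conjugation argument for normality. There is, however, a genuine gap in your final step. After normalizing so that your automorphism only preserves the \emph{set} of sixteen exceptional lines, and after translating by $b\in JC[2]$, you obtain an origin-fixing automorphism $A$ of the abelian surface and conclude ``$A\in\Aut(JC)\cong\Aut(C)$''. That isomorphism is the Torelli theorem, and it is valid only for automorphisms preserving the principal polarization; an origin-fixing automorphism of $JC$ \emph{as an abelian surface} need not preserve $\Theta$. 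This is not a vacuous worry for the surfaces at hand: every curve $C$ in the family $\mathscr{C}$ has $\Aut(C)\supseteq V_4$, so $JC$ is isogenous to a product $E_1\times E_2$ of elliptic curves, and on the loci where $E_1$ and $E_2$ are isogenous (for instance the $D_4$ locus containing the B\"uchi surface itself) the group of all origin-fixing automorphisms of $JC$ is the unit group of an order in $M_2(\qq)$, which is \emph{infinite}---vastly larger than $\Aut(C)/\langle\sigma\rangle$. So, as written, the step does not follow, and it is exactly the point where the unpolarized statement is false.

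The gap can be closed in two ways, and the paper's choice of normalization is designed precisely to avoid it. One repair: your automorphism is linear, so it preserves the hyperplane class $H$ of the $\pp^5$ model, and by assumption it preserves $\sum E_i$; hence it preserves the hyperplane class of the quartic model in $\pp^3$, which is the rational combination $\frac{1}{2}\bigl(H+\frac{1}{2}\sum E_i\bigr)$ of these two classes, and whose pullback to $JC$ is the class of $2\Theta$; therefore $A$ does preserve the polarization and Torelli applies. The better repair, which is what the paper does: since $\Sigma$ acts \emph{simply transitively} on all $32$ lines, you may normalize $\gamma$ so that it fixes the trope $\ell$ itself, not merely the bipartition class; then the induced automorphism of $JC$ preserves the theta divisor corresponding to $\ell$, and the Torelli theorem \cite{La}*{Th\'eor\`eme~3} applies immediately. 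With either repair your argument is complete and coincides in substance with the paper's proof.
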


\begin{proof}
Let $G$ be the linear automorphism group of $X$.  Denote by 
$G_C$ the subgroup of $G$ induced by $\Aut(C)$: the group 
$G_C$ is isomorphic to $\Aut(C)/\langle\sigma\rangle$.  
We first show that the group $G$ is 
generated by $G_C$ and $\Sigma$.  
By what we have seen, the group $\Sigma$ acts simply 
transitively on the set of lines of $X$.
Let $\gamma$ be a linear automorphism of $X$.  
Composing $\gamma$ with an element of 
$\Sigma$ if necessary, 
we reduce to the case when $\gamma$ 
fixes the line $\ell$, in particular it preserves 
each of the two sets of pairwise disjoint lines of $X$.
We deduce that $\gamma$ comes
from an automorphism of $JC$.
Moreover, since it preserves the theta divisor 
corresponding to $\ell$, then it is induced 
by an automorphism of $C$ by the 
Torelli Theorem~\cite{La}*{Th\'eor\`eme~3}.

Each element of $G_C$ is a scalar
permutation by Theorem~\ref{aut} and 
fixes at least one of the lines. We deduce that 
the intersection $G_C \cap \Sigma$ consists of only the identity, and we are left 
to show that $\Sigma$ is a normal subgroup of the group $G$.
This follows from the fact that 
if $D\in\Sigma$ and $M\in G_C$,
then $M^{-1}DM$ is a diagonal involution
and hence is in $\Sigma$.
\end{proof}

\begin{remark}
The linear automorphism group of the
general fiber $X$ of $\mathscr{X}$ is
generated by $\Sigma$ and by the involution
\[
 \tau\colon X\to X
 \qquad
 [x_0,\dots,x_4]\mapsto
 [tsx_3 , 2t^2sx_2 , sx_1 , 2tsx_0 , tx_5 , 2ts^2x_4].
\]
In fact, the group $\Sigma$ is contained 
in the linear automorphism group of the 
resolution of any Jacobian Kummer surface 
defined by the Klein equations in $\pp^5$.
The involutions changing an even number of 
signs preserve each set of pairwise disjoint lines 
and do not preserve any line, this implies 
that they are induced by the translations by $2$-torsion 
points on $JC$.

If $X$ corresponds to a 
genus two curve $C$ with $\Aut(C)\cong D_4$,
then the linear automorphism group of
$X$ is generated by $\Sigma$, $\tau$ and
the involution $\rho$ defined in Subsection~\ref{auto}
\end{remark}

\subsection{Special families}
In this subsection we study the 
special loci in the affine plane $\mathbb{A}^2_{t,s}$
where the corresponding genus two curve admits
extra automorphisms. We compute the
Picard group of the minimal resolution of
the Kummer surface
over each component.

\begin{proof}[Proof of Theorem~\ref{Pic}]
In each case we determine the intersection 
matrix of $\Pic(X)$ as follows.
By means of Theorem~\ref{NS}
we first compute the transcendental lattice of $JC$
which is the orthogonal complement of $\NS(JC)$ 
in $H^2(JC,\zz)\cong U^{\oplus 3}$.
This easily gives the transcendental 
lattice $T(X)$, which is isometric to $T(JC)(2)$
by~\cite{Mo}*{Proposition~4.3}.
Finally, we obtain $\Pic(X)$ as the 
orthogonal complement of $T(X)$ in the K3 lattice.
Let $(A_L,q_L)$ denote the discriminant group of 
a sublattice $L$ of a unimodular lattice.
When computing the orthogonal complement 
of either $\NS(JC)$ or $T(X)$ we use the fact that 
the discriminant group of $A_{L^\perp}$ is isomorphic to 
$(A_{L},-q_L)$ and that,
under suitable conditions, a lattice 
with a given signature and discriminant 
quadratic form is unique up to isometries.
This uniqueness property follows 
from~\cite{N79}*{Theorem~1.13.2} 
and from the classification of positive 
definite binary quadratic forms \cite{CS}*{Table 15.1}.
Observe that in case $G=2D_6$ the quadratic form ${\rm diag}(2,6)$ 
is the only positive definite binary form with discriminant 
group isomorphic to $\zz/2\zz\oplus \zz/6\zz$ 
and having a non-zero isotropic vector.
In case $G=\tilde S_4$ the quadratic form ${\rm diag}(2,4)$ is the only 
positive definite binary form with discriminant 
group isomorphic to $\zz/2\zz\oplus \zz/4\zz$.

We will explain the case $G=D_4$, the other cases being similar.
The discriminant group of $\NS(JC)$ is isometric to $(\zz/2\zz)^{3}$ 
with quadratic form 
\[
q_{\NS(JC)}=\left(\begin{array}{cc} 
0 & \frac{1}{2} \\
\frac{1}{2} & 0
\end{array}
\right)\oplus \Big(-\frac{1}{2}\Big).
\]
Thus $T(JC)$ has signature $(2,1)$ and 
discriminant group  $((\zz/2\zz)^{3},-q_{\NS(JC)})$.
The only lattice with such property, 
up to isometries, is $U(2)\oplus (2)$.
Thus $T(X)$ is isometric to $U(4)\oplus (4)$
and its discriminant group
is $(\zz/4\zz)^3$ with quadratic form 
\[
 q_{T(X)} :=
  \left(
  \begin{matrix}
   0 & \frac{1}{4} \\
   \frac{1}{4} & 0
  \end{matrix}
  \right)\oplus \Big(\frac{1}{4}\Big).
 \]
The Picard lattice of $X$ has signature
$(1,18)$ and discriminant quadratic form 
$-q_{T(X)}$: again by the uniqueness property,
it is isometric to $U(4)\oplus(-4)\oplus E_8^{\oplus 2}$.
\end{proof}

Recall that the hyperelliptic involution
$\sigma$ in coordinates is $(x,y)\mapsto (x,-y)$,
while the involution $\tau$ is 
$(x,y) \mapsto \bigl( \frac{2}{x}, \frac{2\sqrt{2}y}{x^3} \bigr)$.
The corresponding families of quotient curves 
are smooth of genus one and 
admit the following affine equations:
\begin{align*}
 E_1 := \mathscr{C}_{t,s}/\langle\tau\rangle
 \qquad
 y^2 = (x+2\sqrt{2})\Big(x-2t-\dfrac{1}{t}\Big)\Big(x-2s-\dfrac{1}{s}\Big) , \\
 E_2 := \mathscr{C}_{t,s}/\langle\sigma\tau\rangle
 \qquad
 y^2 = (x-2\sqrt{2})\Big(x-2t-\dfrac{1}{t}\Big)\Big(x-2s-\dfrac{1}{s}\Big) .
\end{align*}
The quotient maps are
$(x,y)\mapsto \bigl( x+\frac{2}{x}, \frac{y}{x} \pm \frac{\sqrt{2} y}{x^2} \bigr)$,
where the positive sign is for $E_1$
and the negative sign is for $E_2$.
For special values of $t$ and $s$
the hyperelliptic curve curve $\mathscr{C}_{t,s}$
acquires more automorphisms: see Table~\ref{tab:auts} and also~\cite{CGLR}.

\begin{table}[h]
\begin{tabular}{l|l}
$\Aut(C)$  & Locus in the $(t,s)$ plane\\
\midrule
$D_4$ & $(t+s) (2 ts + 1) = 0$ \\[5pt]
$D_6$ & $(t^{2} - ts + s^{2} - \frac{1}{2}) (4t^2s^2 - 2ts - 2t^2 + 1)$ \\[5pt]
& $(4t^2s^2 - 2ts - 2s^2 + 1) ( 2t^2s^2 - t^2 + ts - s^2) = 0$ \\[5pt]
$2D_6$ & 
$\pm \left( -\sqrt{\frac{3}{2}},\frac{1}{\sqrt{6}} \right) , \; \pm \left( \frac{1}{\sqrt{6}},-\frac{1}{\sqrt{6}} \right) , \; \pm \left( \sqrt{\frac{3}{2}},-\sqrt{\frac{3}{2}} \right) , \; \pm \left( \frac{1}{\sqrt{6}},-\sqrt{\frac{3}{2}} \right) $ \\[7pt]
$\tilde{S}_4$ & 
$\pm \left( \dfrac{i-1}{2} , \dfrac{1-i}{2} \right) , \; \pm \left( \dfrac{i-1}{2} , \dfrac{1+i}{2} \right) , \; \pm \left( \dfrac{i+1}{2} , \dfrac{-1-i}{2} \right) , \; \pm \left( \dfrac{i+1}{2} , \dfrac{i-1}{2} \right)$
\end{tabular}
\vspace{5pt}
\caption{Automorphism groups of the curves in the family $\mathscr{C}_{t,s}$} \label{tab:auts}
\end{table}

\noindent
The curves in the affine $(t,s)$-plane
corresponding to $D_4$ and $D_6$
intersect in $17$ points.
One of them is $(0,0)$ which corresponds to a singular
curve. The remaining sixteen points
appear in Table~\ref{tab:auts}.
The eight points for $2D_6$ correspond to
isomorphic curves of genus two
and the same holds for the eight points
in the case~$\tilde S_4$.

\begin{example}
\label{bre}
In~\cite{Br} A.~Bremner 
investigated the geometry 
of the K3 surface defined by 
\[
X_{B} \colon \quad 
\left\{
\begin{array}{rcl}
x^2+xu -u^2 & = & w^2 +wz -z^2 \\[5pt]
y^2+yv -v^2 & = & u^2 +ux-x^2 \\[5pt]
z^2 +zw-w^2 & = & v^2 +vy -y^2.
\end{array}
\right.
\]
His main motivation was the search
for rational points of $X_B$, 
since these give (cubing and summing the equations) 
rational solutions to the classical
Diophantine equation
$x^6+y^6+z^6=u^6+v^6+w^6$.
Recently M. Kuwata \cite{Ku}*{Theorem~5.1, Corollary~5.2} proved 
that $X_B$ is the Kummer surface of 
$JC_B$, where   
\[
C_B \colon \quad y^2=(x^2+1)(x^2+2x+5)(x^2-2x+5),
\]
and that $\Pic(X_B)$ has rank $19$.
An easy computation shows that 
$C_B=\mathscr{C}_{t,s}$ with
$(t,s)=(\frac{1}{\sqrt{-2}}-\sqrt{2},\sqrt{-2})$,
so that its automorphism group is isomorphic to 
$D_{6}$. Since the $j$-invariants of 
the quotient curves $E_1$, $E_2$
are not rational integers,  
the curves do not admit complex multiplication 
so that ${\rm rk\, Hom}(E_1,E_2)=1$.
By Theorem \ref{Pic}  
the Picard lattice of $X_B$ is isometric to 
$U(2)\oplus E_8^2\oplus (-12)$.
We observe that a basis of $\Pic(X_B)$ 
had been given in~\cite{Br}*{Theorem~3.3}.
\end{example}

\begin{example}
\label{bb}
In~\cite{BB} J.~Browkin and J.~Brzezi{\'n}ski
study the solutions of the system 
\[
X_{BB} \colon \quad
\left\{
\begin{array}{rcl}
 x_1^2-2x_2^2+x_3^2 & = & x_2^2-2x_3^2+x_4^2 \\[5pt]
 x_1^2-2x_2^2+x_3^2 & = & x_3^2-2x_4^2+x_5^2 \\[5pt]
 x_1^2-2x_2^2+x_3^2 & = & x_4^2-2x_5^2+x_6^2
\end{array}
\right.
\]
consisting of the sequences of six integer squares
with constant second differences.
The surface $X_{BB}$ is a smooth K3 surface.
An easy computation shows that it is the Kummer 
surface of the genus two curve $\mathscr C_{t,s}$
with $(t,s)=(2\sqrt{2},\frac{\sqrt{2}}{3})$.
The automorphism group of the curve
is $V_4$. Moreover the $j$-invariants of
the two curves $E_1$ and $E_2$ are 
$\frac{2^6\cdot 7^3\cdot 97^3}{3^6\cdot 5^4}$ 
and $\frac{2^6\cdot 7^3}{3^2}$
respectively. Since the prime $5$ is of bad 
reduction just for $E_1$, the curves
$E_1$ and $E_2$ are not isogenous
and thus ${\rm rk\, Hom}(E_1,E_2)=0$.
By Theorem~\ref{Pic} the Picard lattice 
of $X_{BB}$ is isometric to 
$U\oplus E_8\oplus D_7\oplus (-4)$.
\end{example}

\begin{remark}
Let $X$ be any smooth surface in 
the family $\mathscr{X}$.
The surface $X$ is a moduli space of rank two vector bundles on itself.
This gives a natural modular interpretation to the rational points on $X$.  
We do not know if there is a similar interpretation for the integral points on $X$.

Mukai studied the moduli spaces of stable vector bundles on abelian and K3 surfaces in its influential paper~\cite{mukai}.  In particular, he showed in~\cite{mukai}*{Example~0.9} the existence of a beautiful correspondence between the K3 surfaces that are intersections of three quadrics in $\mathbb{P}^5$ and certain moduli spaces of vector bundles on the K3 surfaces themselves.  The correspondence is obtained as follows.  Let $Y$ be a smooth complete intersection of three quadrics in $\mathbb{P}^5$; denote by $\mathbb{P}^2$ the net of quadrics containing $X$.  In the net of quadrics $\mathbb{P}^2$, the discriminant locus $\Delta_Y$ is the locus of quadrics of rank at most $5$: the discriminant $\Delta_Y$ is a plane curve of degree six.  Denote by $M_Y$ minimal resolution of the double cover of $\mathbb{P}^2$ branched over the sextic $\Delta_Y$.  The general point of $M_Y$ corresponds to a quadric $Q$ containing $Y$, together with the choice of a ruling by $2$-planes of the quadric $Q$.  Identifying smooth quadrics in $\mathbb{P}^5$ with the Grassmannian $Gr(2,4)$, we see that the two rulings by $2$-planes of a quadric $Q$ correspond to the second Chern classes of the rank $2$ vector bundles on $Q$ determined by the dual of the tautological vector bundle and by the universal quotient bundle.  Thus, to each point of $M_Y$ corresponding to a smooth quadric $Q$ containing $X$ together with a choice of ruling on $Q$, we associate a rank two vector bundle on $X$ by restricting to $X$ itself the rank two vector bundle on $Q$ determined by the chosen ruling.  This correspondence induces an isomorphism between $M_Y$ and the irreducible component of the moduli space $\mathcal{M}(2,\mathcal{O}_X(1),2)$ of stable rank two vector bundles $E$ on $X$ with Chern classes $c_1(E) = [\mathcal{O}_X(1)]$ and $\deg(c_2(E)) = 4$.

It is classically known that the K3 surfaces that are complete intersections of a net of quadrics and the double cover of the net of quadrics branched over the discriminant locus are isomorphic as soon as the surface contains a line;
this applies to our case as $X$ contains
$32$ lines.
A modern reference for this result is~\cite{MN}.

\end{remark}

\subsection{Elliptic fibrations}

\begin{theorem} \label{thm:quad}
Let $X$ be a smooth surface in the
family $\mathscr{X}$; 
the following statements hold.
\begin{enumerate}
\item \label{it:qr3}
There are no quadrics of rank three containing $X$.
\item \label{it:qr4}
The quadrics of rank four containing $X$ appear in Table~\ref{rg4}.
\item \label{it:ef}
Every quadric of rank four containing $X$ determines two elliptic fibrations on $X$ defined by pencils of hyperplane sections.
\end{enumerate}
\end{theorem}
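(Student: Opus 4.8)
The plan is to exploit the fact that every quadric containing $X$ is diagonal. Since $X$ is a smooth complete intersection of the three diagonal quadrics defining $\mathscr{X}$, its homogeneous ideal is generated in degree two, so the quadrics through $X$ form the net $\mathbb{P}^2=\mathbb{P}\langle Q_1,Q_2,Q_3\rangle$ spanned by the three displayed equations. A combination $Q_{[\lambda]}=\lambda_1Q_1+\lambda_2Q_2+\lambda_3Q_3$ is again diagonal, hence its rank is the number of non-zero diagonal coefficients. Writing $c_0,\ldots,c_5$ for these coefficients, each $c_i$ is a linear form in $[\lambda_1:\lambda_2:\lambda_3]$: for instance $c_1=\lambda_1$, $c_5=\lambda_3$, $c_2=t^2(s\lambda_2-2\lambda_1)$, $c_4=-s^2(t\lambda_2+2\lambda_3)$, while $c_0=-2\gamma_4\lambda_1+\gamma_2\lambda_2-2\gamma_1\lambda_3$ and $c_3=\gamma_4\lambda_1-ts\gamma_2\lambda_2+\gamma_1\lambda_3$ are the $x_0^2$- and $x_3^2$-coefficients. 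Thus each $\{c_i=0\}$ is a line in the net, and the rank of $Q_{[\lambda]}$ equals $6$ minus the number of these six lines through $[\lambda]$. The whole theorem becomes a statement about the arrangement of the six lines $\{c_i=0\}$ on the smooth locus $st\gamma_1\gamma_2\gamma_3\gamma_4\neq 0$.

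For part~(\ref{it:qr3}) I would show that no three of the six lines are concurrent on the smooth locus; then every point of the net lies on at most two lines, every quadric through $X$ has rank at least four, and ranks $\leq 3$ are excluded at once. Concretely, for each of the $\binom{6}{3}=20$ triples one computes the $3\times 3$ determinant of the corresponding coefficient vectors and checks it is, up to a non-zero constant, a monomial in $s,t,\gamma_1,\ldots,\gamma_4$. The bookkeeping is organized by whether a triple contains the coordinate lines $c_1=\{\lambda_1=0\}$ and $c_5=\{\lambda_3=0\}$: the four triples containing both are settled instantly, since $c_1\cap c_5=[0:1:0]$ lies on none of $c_0,c_2,c_3,c_4$; those containing exactly one of $c_1,c_5$ reduce to comparing the two points cut on that coordinate line by the remaining forms; and the four triples inside $\{c_0,c_2,c_3,c_4\}$ are direct, giving for example $\det(c_0,c_2,c_3)=-2\gamma_1\gamma_2\gamma_3$, $\det(c_0,c_2,c_4)=-8st\gamma_2$, $\det(c_0,c_3,c_4)=2\gamma_2\gamma_3\gamma_4$ and $\det(c_2,c_3,c_4)=-2\gamma_2$. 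In every case the determinant is a product of precisely the factors cutting out the complement of the smooth locus, so it is non-zero for smooth $X$. The involution $\rho$ from Subsection~\ref{auto}, exchanging $x_i\leftrightarrow x_{6-i}$ and thereby interchanging $t$ with $s$ and the two coordinate lines, halves the work.

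For part~(\ref{it:qr4}), once no three lines are concurrent, the rank-four quadrics correspond exactly to the points lying on precisely two lines, that is to the $\binom{6}{2}=15$ pairwise intersection points of the arrangement. These points are distinct because the six lines are pairwise distinct on the smooth locus; for instance $c_0$ and $c_3$ become proportional only when $\gamma_3=0$. Solving each of the fifteen pairs of linear equations produces the explicit quadrics recorded in Table~\ref{rg4}.

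For part~(\ref{it:ef}), a rank-four diagonal quadric $Q$ is a cone with vertex the line $V\cong\mathbb{P}^1$ spanned by its two vanishing coordinates, over a smooth quadric surface $\bar Q\cong\mathbb{P}^1\times\mathbb{P}^1$ in the $\mathbb{P}^3$ of its four non-zero coordinates. Each of the two rulings of $\bar Q$ lifts, together with $V$, to a pencil of $3$-planes $\Pi$ sweeping out $Q$; since $X\subset Q$, the section $X\cap\Pi$ is obtained by cutting $\Pi\cong\mathbb{P}^3$ with the residual two quadrics of the net, hence is a $(2,2)$ complete intersection, a curve of arithmetic genus one. As $\Pi$ varies in a ruling these curves sweep out a genus-one pencil, and the two rulings give two such fibrations. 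To see each as a pencil of hyperplane sections, observe that the hyperplanes through a fixed $3$-plane of one ruling form a pencil whose members cut on $Q$ the moving $3$-plane of the other ruling; hence they cut $X$ in the corresponding genus-one curve together with the fixed base curve, so the elliptic fibration is exactly the moving part of this pencil of hyperplane sections. The main obstacle is part~(\ref{it:qr3}): one must verify the non-concurrency over the \emph{entire} smooth locus, which is exactly what forces each determinant to factor through $s,t,\gamma_1,\ldots,\gamma_4$; this is the only genuinely computational input. A secondary point in part~(\ref{it:ef}) is to confirm that the generic $(2,2)$-curve in each pencil is smooth and irreducible, so that the genus-one pencil is a bona fide elliptic fibration.
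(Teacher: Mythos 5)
Your proposal is correct in substance, but for parts (i) and (ii) it takes a genuinely different, more computational route than the paper. Where you prove non-concurrency of the six lines $\{c_i=0\}$ in the net by checking all twenty $3\times 3$ determinants, the paper disposes of (i) synthetically: a quadric $Q$ of rank at most three in $\mathbb{P}^5$ has singular locus containing a plane $\Pi$; writing $X=Q\cap Q_1\cap Q_2$, the set $X\cap\Pi=\Pi\cap Q_1\cap Q_2$ is a non-empty intersection of two conics in $\Pi\cong\mathbb{P}^2$, and any of its points is a point of $X$ where the Jacobian of $(Q,Q_1,Q_2)$ has rank at most two, contradicting smoothness. This needs no computation, works verbatim for any smooth complete intersection of three quadrics, and it is also what guarantees in (ii) that each pair of elimination conditions cuts out a single point of the net, i.e. that your six lines are pairwise distinct with all fifteen intersection points distinct (your phrase ``distinct because the lines are pairwise distinct'' should really invoke non-concurrency, which you do have). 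What your route buys is explicitness: it exhibits the discriminant locus $st\gamma_1\gamma_2\gamma_3\gamma_4=0$ as exactly the common vanishing locus of the twenty determinants. The cost is bookkeeping that is easy to get wrong: the sample values you quote are off by monomial factors --- for instance $\det(c_0,c_2,c_3)=-2t^2\gamma_1\gamma_2\gamma_3$, $\det(c_2,c_3,c_4)=2t^2s^2\gamma_2$, $\det(c_0,c_3,c_4)=-2s^2\gamma_2\gamma_3\gamma_4$ --- harmless here since $s,t\neq 0$ on the smooth locus, but symptomatic of the brute-force approach.

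For part (iii) your argument is essentially the paper's: the paper projects away from the vertex line $V$ of $Q$ onto $\overline{Q}\cong\mathbb{P}^1\times\mathbb{P}^1$ and composes with the two rulings, and your residual-intersection description correctly explains why these fibrations are cut by pencils of hyperplane sections. However, you omit the one step the paper makes explicit: $X\cap V=\emptyset$, which follows from smoothness by the same Jacobian argument as in (i). Without it, assigning to a point of $X$ the ruling $3$-plane through it is only a rational map, so your genus-one pencils could a priori have base points on $X\cap V$; with it, both fibrations are morphisms, and your remaining observations (the fibers are connected $(2,2)$ complete intersections of arithmetic genus one) finish the proof. You flag smoothness of the generic fiber as the loose end, but vertex-avoidance is the point actually needed to make the construction a fibration.
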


{\footnotesize
\begin{longtable}{c@{\;\;\;}@{\;\;\;}c@{\;\;\;}@{\;\;\;}c}
\hline \\[-7pt]
 $\gamma_1(x_1^2 -2t^2 x_2^2) + \gamma_4 (2s^2 x_4^2 - x_5^2)$
& $sx_1^2 +\gamma_2x_3^2 - tx_5^2 -4ts\gamma_2x_0^2$ 
& $t (ts x_2^2 - s \gamma_2 x_3^2 - s^2 x_4^2) + \gamma_2x_0^2$ \\
\\[-7pt] \hline \\[-7pt]
 $ x_1^2 - 2t^2x_2^2 + \gamma_4 (x_3^2 -2 x_0^2)$ 
& $ts(\gamma_2x_1^2 + t\gamma_3x_2^2 -s\gamma_4x_4^2) -\gamma_2\gamma_3\gamma_4x_0^2$ 
& $s\gamma_3x_1^2 + 4t^3s\gamma_2x_2^2 -\gamma_2\gamma_3\gamma_4 x_3^2 -t\gamma_4x_5^2$ \\
\\[-7pt] \hline \\[-7pt]
 $\gamma_1x_3^2 - 2s^2x_4^2 + x_5^2 -2\gamma_1x_0^2$ 
& $ ts(t\gamma_1 x_2^2 -s\gamma_3 x_4^2 + \gamma_2 x_5^2) -\gamma_1\gamma_2\gamma_3x_0^2$ 
& $s\gamma_1x_1^2 -\gamma_1\gamma_2\gamma_3 x_3^2 + 4ts^3\gamma_2x_4^2 -t\gamma_3x_5^2$ \\
\\[-7pt] \hline \\[-7pt]
 $s(x_1^2 + \gamma_3x_3^2 - 2ts x_4^2) -2t\gamma_3x_0^2$ 
& $t(\gamma_3x_1^2 + 2t\gamma_2x_2^2 -\gamma_4x_5^2) -2\gamma_2\gamma_3\gamma_4x_0^2$ 
& $\gamma_2x_1^2 +2t^3\gamma_3x_2^2 -\gamma_2\gamma_3\gamma_4 x_3^2 -2ts^2\gamma_4x_4^2$ \\
\\[-7pt] \hline \\[-7pt]
 $t(2tsx_2^2 -\gamma_3x_3^2 - x_5^2) + 2s\gamma_3x_0^2$ 
& $s(\gamma_1x_1^2 + 2s\gamma_2x_4^2 -\gamma_3x_5^2) -2\gamma_1\gamma_2\gamma_3x_0^2$ 
& $2st^2\gamma_1x_2^2 - \gamma_1\gamma_2\gamma_3 x_3^2 -2s^3\gamma_3x_4^2 + \gamma_2x_5^2$ 
\\[2pt] \hline
\\[-5pt]
\caption{Quadrics of rank four containing $X$} \label{rg4}
\end{longtable}}

\begin{proof}
\eqref{it:qr3}
Suppose that $Q$ is a quadric of rank at most three containing $X$, and hence the singular locus of $Q$ contains a two-dimensional plane $\Pi$.  Let $Q_1,Q_2$ be two quadrics such that $X = Q \cap Q_1 \cap Q_2$; the intersection $X \cap \Pi$ contains $\Pi \cap Q_1 \cap Q_2$ and is therefore not empty and consists of singular points of $X$, contradicting the assumption that $X$ is smooth.  We deduce that every quadric vanishing on $X$ has rank at least four.

\eqref{it:qr4}
Since the surface $X$ is defined by diagonal quadrics, it follows that the quadrics of rank at most four containing $X$ are obtained from the equations of $X$ by eliminating two or more of the variables.  Eliminating a variable corresponds to a linear equation on the net of quadrics containing $X$, so that for each pair $\mathscr{S}$ of variables there is a quadric containing $X$ and not involving the variables in $\mathscr{S}$.  An easy computation shows that these are exactly the quadrics defined by the polynomials in Table~\ref{rg4}.

\eqref{it:ef}
Let $Q \subset \mathbb{P}^5$ be a quadric of rank four containing the surface $X$ (see Table~\ref{rg4}); since $X$ is smooth and it is a complete intersection of quadrics, it follows that $X$ does not meet the vertex of $Q$.  In particular, the rational map $\mathbb{P}^5 \dashrightarrow \mathbb{P}^3$ obtained by projecting away from the singular subscheme of $Q$ induces a morphism $\pi_Q \colon X \to \overline{Q}$, where $\overline{Q} \subset \mathbb{P}^3$ is the image of the projection of $Q$ from its vertex; composing the morphism $\pi_Q$ with one of the two projections $\overline{Q} \simeq \mathbb{P}^1 \times \mathbb{P}^1 \to \mathbb{P}^1$ we obtain a fibration $X \to \mathbb{P}^1$.  It is immediate to check that these fibrations have connected fibers and therefore they are elliptic fibrations: we deduce from items~\eqref{it:qr3} and~\eqref{it:qr4} that there are 30 elliptic fibrations on $X$ arising in this way.
\end{proof}

\begin{theorem}
\label{dense}
The family of Kummer surfaces $\mathscr{X} \to \mathbb{A}^2_{t,s}$ admits 
the structure of a family of elliptic fibrations over $\mathbb{P}^1$ with Mordell-Weil 
group of positive rank.  In particular, if $X$ is a smooth surface in the 
family $\mathscr{X}$ defined over a field $K$, then the set of $K$-rational points of $X$ is Zariski-dense.
\end{theorem}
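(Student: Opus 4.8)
The plan is to exploit the elliptic fibrations produced in Theorem~\ref{thm:quad}. By item~\eqref{it:ef}, every rank-four quadric in Table~\ref{rg4} gives two elliptic fibrations $X \to \mathbb{P}^1$ defined over the function field $K(t,s)$, and these fibrations come equipped with obvious sections: the $32$ lines on $X$ described by the family $\ell$ and its sign-change translates. Since $\Sigma \cong (\mathbb{Z}/2\mathbb{Z})^5$ acts simply transitively on the lines, each fibration will have several of the $32$ lines as multisections or sections, and the differences of these sections will give candidate points in the Mordell-Weil group. The core of the argument is therefore to select one fibration from Table~\ref{rg4}, put it into Weierstrass form over $\mathbb{Q}(t,s)$, and exhibit a section whose class in the Mordell-Weil group is of infinite order (i.e. not torsion) for generic $(t,s)$.

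The key steps, in order, are as follows. First I would fix a convenient rank-four quadric $Q$ from Table~\ref{rg4} and the associated pencil of hyperplane sections cutting out the elliptic fibration $\pi_Q \colon X \to \mathbb{P}^1$; a good choice is one for which several of the explicit lines~\eqref{lines} (or their $\Sigma$-translates) lie over distinct points of the base, so that they provide both a zero section and at least one further section. Second, using the explicit equations of $\mathscr{X}$ together with the chosen section as origin, I would compute a Weierstrass model of the generic fiber as an elliptic curve over $\mathbb{Q}(t,s)$. Third, I would identify the image of one of the other lines as a rational point $P$ on this Weierstrass curve and verify that $P$ is non-torsion: the cleanest way is to specialize to a single rational value $(t_0,s_0) \in \mathbb{Q}^2$ in the smooth locus, check by a height computation (or by reduction modulo a couple of primes and comparing orders in the reduced groups) that the specialized point has infinite order, and invoke the specialization theorem of N\'eron--Silverman to conclude that $P$ has infinite order generically. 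Finally, for any smooth $X$ over a field $K$ with a $K$-rational point on the relevant fiber, the infinitely many multiples $nP$ of this non-torsion section produce infinitely many sections defined over $K$, hence infinitely many $K$-rational curves sweeping out $X$; their union is Zariski-dense because the images of distinct sections are distinct horizontal curves.

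The main obstacle I anticipate is twofold. The geometric part—showing that some line-difference is non-torsion in the Mordell-Weil group of the generic fiber—is the crux: torsion sections would only yield finitely many rational curves and would not force density. Verifying non-torsion rigorously requires either a careful specialization argument or an explicit height pairing computation, and one must ensure the chosen specialization $(t_0,s_0)$ lies in the good-reduction locus $st\gamma_1\gamma_2\gamma_3\gamma_4 \neq 0$ so that the specialization map is injective on the free part. The secondary subtlety is the passage from ``infinitely many sections'' to genuine Zariski-density over an \emph{arbitrary} field $K$ (not merely $\mathbb{Q}$): here one must confirm that the $K$-points on the sections are not all concentrated in a single fiber or curve, which follows once the sections $nP$ are pairwise distinct horizontal divisors, since their union cannot be contained in any finite union of curves on the surface.
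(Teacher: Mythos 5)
Your overall framework---elliptic fibrations coming from the rank-four quadrics of Theorem~\ref{thm:quad}, with the $32$ lines supplying sections---is the same as the paper's, but the way you certify positive rank has a genuine gap. Your specialization step only proves that the chosen section $P$ is non-torsion on the \emph{generic} member of the family, i.e.\ over $\mathbb{Q}(t,s)$ (in fact that direction needs no N\'eron--Silverman at all: torsion specializes to torsion, so a single specialization of infinite order already forces generic infinite order). But the theorem asserts positive Mordell-Weil rank and Zariski density for \emph{every} smooth member $\mathscr{X}_{t_0,s_0}$ over \emph{every} field $K$---and this is the whole point, since the intended application is to the single surface $X_5=\mathscr{X}_{-1,1}$. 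Passing from generic non-torsion to non-torsion on a given member requires injectivity of the specialization homomorphism at that member, and your claim that this holds because $(t_0,s_0)$ lies in the smooth (``good-reduction'') locus is false: smoothness of the fiber does not make specialization injective on the free part. Silverman's theorem only says that the locus of non-injectivity is small, it is proved for one-parameter bases over number fields, and it gives nothing over an arbitrary field $K$ (for instance in positive characteristic); so an uncontrolled exceptional set of members---possibly containing $\mathscr{X}_{-1,1}$ itself---escapes your argument.

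The paper avoids specialization entirely by making the non-torsion certificate geometric and uniform in $(t,s)$: it singles out one rank-four quadric $\mathscr{Q}$ containing the whole family, whose two rulings give fibrations $\pi_{\pm}$, and partitions the $32$ lines into two sets $\mathscr{L}_{\pm}$ of $16$; the lines of $\mathscr{L}_+$ form four $I_4$ fibers of $\pi_+$ and are sections of $\pi_-$, and vice versa. Among the $16$ line-sections of either fibration there are pairs that intersect (each row of the partition is a square of lines), and by Shioda's height theory \cite{Shioda}, given this fiber configuration, the difference of two intersecting sections cannot be torsion; hence the rank is positive. Since the lines, their incidences, and the $I_4$ fibers are defined over $K$ for every smooth member, this argument applies verbatim to each surface in the family, which is exactly what your approach misses. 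If you want to keep your computational route, the repair is not a stronger specialization theorem but a certificate of this uniform kind (intersecting sections plus known reducible fibers), which persists at every smooth specialization.
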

\begin{proof}
The family $\mathscr{X}$ is contained in the 
rank four quadric $\mathscr{Q}$ of equation
\[
\mathscr{Q} \colon \quad \gamma_1 (x_1^2 - 2t^2 x_2^2) + 
 \gamma_4 (2s^2x_4^2-x_5^2) = 0.
\]
This quadric is a cone over 
$\pp^1\times\pp^1 \times \mathbb{A}^2_{t,s}$ with vertex a line.
The two rulings of $\pp^1\times\pp^1$ induce
two elliptic fibrations $\pi_{\pm} \colon \mathscr{X} \to \pp^1 \times \mathbb{A}^2_{t,s}$.
We partition the set $\mathscr{L}$ of lines of $\mathscr{X}$ into the sets 
\[
\mathscr{L}_+ = 
\begin{pmatrix}
\ell & \sigma_0 \ell & \sigma_3 \ell & \sigma_{03} \ell \\
\sigma_{14} \ell & \sigma_{014} \ell & \sigma_{134} \ell & \sigma_{25} \ell \\
\sigma_{24} \ell & \sigma_{024} \ell & \sigma_{234} \ell & \sigma_{15} \ell \\
\sigma_{45} \ell & \sigma_{045} \ell & \sigma_{345} \ell & \sigma_{12} \ell 
\end{pmatrix} ,
\quad 
\mathscr{L}_- = 
\begin{pmatrix}
\sigma_1 \ell & \sigma_{01} \ell & \sigma_{13} \ell & \sigma_{013} \ell \\
\sigma_{4} \ell & \sigma_{04} \ell & \sigma_{34} \ell & \sigma_{034} \ell \\
\sigma_{124} \ell & \sigma_{35} \ell & \sigma_{05} \ell & \sigma_{5} \ell \\
\sigma_{145} \ell & \sigma_{23} \ell & \sigma_{02} \ell & \sigma_{2} \ell 
\end{pmatrix} ;
\]
each row of $\mathscr{L}_+$ forms a square in $\mathscr{X}$ and similarly for the rows of $\mathscr{L}_-$.
The lines of $\mathscr{L}_+$ form four fibers of 
$\pi_+$ of type $I_4$ and are sections of $\pi_-$; 
similarly, the lines of $\mathscr{L}_-$ form four fibers of 
$\pi_-$ of type $I_4$ and are sections of $\pi_+$.
Thus both elliptic fibrations have Mordell-Weil 
group of positive rank, since they
admit two intersecting sections~\cite{Shioda}*{Lem.~8.2 and Thm.~8.4}.

The last statement follows at once since all the lines of 
$X$ are defined over the field $K$.
\end{proof}

\section{The B\"uchi K3-surface} \label{seku}
From now on we denote by $X$ the B\"uchi K3 
surface $X_5$ or equivalently $\mathscr{X}_{-1,1}$.
In particular $X$ is the Kummer
surface of the genus two curve $\mathscr C_{t,s}$
with $(t,s)=(-1,1)$ which has affine equation 
$y^2 = x(x^2-4)(x^2-1)$.
Thus $\Aut(C)$ is isomorphic to $D_4$.
Since the $j$-invariants of 
the quotient curves $E_1$, $E_2$
are not rational integers, then 
the curves do not admit complex multiplication 
so that ${\rm rk\, Hom}(E_1,E_2)=1$.
By Theorem \ref{Pic} we conclude
\[
 \Pic(X)
 \cong
 U(4)\oplus(-4)\oplus E_8^{\oplus 2}.
\]
Over the field $\mathbb{Q}(\sqrt{-6})$ we can 
absorb more of the coefficients of the equation 
of the Kummer surface $S$ in $\mathbb{P}^3$ 
to obtain the equation 
\[
 9 (x_0^4 + x_1^4 + x_2^4 + x_3^4) 
 + 6(x_0^2x_1^2 + x_2^2x_3^2) +
     6 (x_0^2x_2^2 + x_1^2x_3^2)
 - 14 (x_0^2x_3^2 + x_1^2x_2^2) 
  = 0.
\]

\subsection{Elliptic fibrations} \label{seell}

In this section we determine the elliptic fibrations 
on the surface $X$ induced by the quadrics of 
rank four containing $X$. Such quadrics are
given in Table~\ref{rg4} for $\mathscr{X}_{t,s}$.
We describe here the corresponding singular 
fibers and the Mordell-Weil groups for the
B\"uchi K3 surface $X$.
As a consequence, we find all the conics contained in~$X$.
Observe that each of the 15 quadrics in Table~\ref{rg4} 
does not involve a pair of variables of $\mathbb{P}^5$.  
In the three columns of Table~\ref{rg4} the first quadric 
is invariant under the linear automorphism group, while 
the remaining four form two pairs of orbits.

We later analyze in detail the fibrations associated to the 
first quadric in the first column of Table~\ref{rg4}.  
In general, using the Shioda-Tate formula (\cite{ST}*{Theorem~1.1}) 
and an explicit set of generators of the Picard group of $X$,
described in Proposition~\ref{pic}, we computed with 
Magma~\cite{Magma} the Mordell-Weil groups of the 
fibrations associated to the quadrics in Table~\ref{rg4}: 
\begin{enumerate}
\item \label{2+4}
the Mordell-Weil groups associated to the first quadric in the first column are isomorphic to $\mathbb{Z} \oplus (\mathbb{Z}/2\mathbb{Z} \oplus \mathbb{Z}/4\mathbb{Z})$, with singular fibers $4I_4+4I_2$; 
\item \label{4+0}
the Mordell-Weil groups associated to the remaining quadrics in the first column are isomorphic to $\mathbb{Z}^3 \oplus (\mathbb{Z}/4\mathbb{Z})$, with singular fibers $4I_4+2I_2+4I_1$; 
\item \label{2+0}
the Mordell-Weil groups associated to the quadrics in the last two columns are isomorphic to $\mathbb{Z}^5 \oplus (\mathbb{Z}/2\mathbb{Z})$, with singular fibers $4I_4+8I_1$.
\end{enumerate}
Each fibration contains four fibers of type $I_4$, represented by four squares consisting of lines.  We also checked that the torsion elements of the Mordell-Weil group are represented by lines on $X$ and that the quotient of the Mordell-Weil group by the subgroup generated by the lines is free of rank $0$ in case~\eqref{2+4}, rank $1$ in case~\eqref{4+0}, and rank $2$ in case~\eqref{2+0}.

\begin{proposition} \label{con4}
Every conic on $X$ is contained in a reducible fiber of an elliptic pencil on $X$, and viceversa, every fiber of type $I_2$ in an elliptic pencil on $X$ arising from the quadrics in Table~\ref{rg4} is the union of two conics.
\end{proposition}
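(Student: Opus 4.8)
The plan is to treat the two implications separately, recording first the numerical data common to all the fibrations coming from Table~\ref{rg4}. Each rank four quadric $Q$ gives a projection $\pi_Q\colon X\to\overline{Q}\cong\pp^1\times\pp^1\subset\pp^3$ from the vertex line of $Q$, and the two rulings of $\overline{Q}$ produce the fiber classes $F^+=\pi_Q^*\mathcal{O}(1,0)$ and $F^-=\pi_Q^*\mathcal{O}(0,1)$. Since $X$ misses the vertex of $Q$, the projection is a morphism with $\pi_Q^*\mathcal{O}_{\pp^3}(1)=H$, whence $F^++F^-=H$; both classes are nef and isotropic and satisfy $F^\pm\cdot H=4$. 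A conic $D$ has $D\cdot H=2$ and, being a smooth rational curve on a K3 surface, $D^2=-2$; moreover $D$ is a component of a fiber of the fibration $\pi_\pm$ exactly when $F^\pm\cdot D=0$. These remarks turn the statement into a question about intersection numbers.

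For the implication ``$I_2$ fiber $\Rightarrow$ two conics'' I would argue as follows. A fiber of type $I_2$ is a cycle $C_1+C_2$ of two smooth rational curves with $C_1\cdot C_2=2$; since its class is $F^\pm$ we get $C_1\cdot H+C_2\cdot H=F^\pm\cdot H=4$, each summand being at least $1$. If some $C_i\cdot H=1$, then $C_i$ is one of the $32$ lines of $X$. But every line $\ell$ satisfies $\ell\cdot F^++\ell\cdot F^-=\ell\cdot H=1$, so $\ell$ is either a component of a fiber of $\pi_+$ or a section of $\pi_+$, never both; the fiber-component lines are precisely the sixteen lines forming the four $I_4$ fibers (as in the partition $\mathscr{L}_\pm$ in the proof of Theorem~\ref{dense}), and none of these lies in an $I_2$ fiber. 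Hence $C_1\cdot H=C_2\cdot H=2$, and a smooth rational curve of degree two is a conic.

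For the converse I would exploit the relation $F^++F^-=H$. Let $D$ be a conic. For each of the fifteen quadrics we have $F^+\cdot D+F^-\cdot D=H\cdot D=2$ with both terms nonnegative. If some term vanishes, then $D$ lies in a fiber of the corresponding fibration $\pi_\pm$; as $D^2=-2\neq 0=(F^\pm)^2$, this fiber is strictly larger than $D$ and hence reducible, which is exactly the assertion. It therefore suffices to exclude the case in which $F^\pm\cdot D=1$ for all thirty fiber classes, that is, the case in which $D$ is a section of every fibration arising from Table~\ref{rg4}.

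Excluding this last case is the main obstacle, and it is where the specific geometry of $X$ must enter: on a general K3 surface a $(-2)$-curve need not be a component of any fiber, so no purely formal K3 argument can succeed. I would dispose of it by a finite computation in the Picard lattice. Using the explicit generators of $\Pic(X)$ from Proposition~\ref{pic} together with the thirty classes $F_i^\pm$, one checks that these classes span $\Pic(X)\otimes\qq$; the linear system $v\cdot F_i^\pm=1$ then has at most one rational solution $v_0$, and one verifies that no such $v_0$ is the class of an irreducible conic (it either fails to be integral, or fails $v_0^2=-2$, or is not represented by an irreducible curve). This rules out the bad case and finishes the proof. In practice this verification, combined with the enumeration of the finitely many classes $v$ with $v^2=-2$ and $v\cdot H=2$, is carried out in Magma and simultaneously produces the complete list of conics on $X$.
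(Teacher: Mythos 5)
Your reduction of the forward direction collapses at its pivotal computational claim. You assert that the thirty fiber classes $F_i^\pm$ span $\Pic(X)\otimes\qq$, so that the system $v\cdot F_i^\pm=1$ has at most one rational solution. This is false, and provably so from facts stated in the paper itself: by the analysis in Section~\ref{seell}, every one of the thirty fibrations has four fibers of type $I_4$ whose components are lines, so each class $F_i^\pm$ is a sum of four line classes and therefore lies in the sublattice of $\Pic(X)$ spanned by the $32$ lines. By Proposition~\ref{pic} that sublattice $\Pic(X)_G$ has rank $17$, whereas $\Pic(X)$ has rank $19$; the conics $C_1,C_2$ are precisely what the lines fail to generate. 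Consequently the span of the $F_i^\pm$ has rank at most $17$, and the solution set of your linear system is the affine subspace $\tfrac{1}{4}H+\langle F_i^\pm\rangle^\perp$ of dimension at least $2$ (note that $\tfrac{1}{4}H$ is always a solution, since $F_i^\pm\cdot H=4$). Your uniqueness step therefore fails, and what remains --- deciding whether this at least two-parameter family of rational solutions contains an integral class with $v^2=-2$ and $v\cdot H=2$ represented by an irreducible curve --- is exactly the entire content of the forward direction; your proposal defers it to a Magma computation that is neither set up correctly nor carried out. This is a genuine gap, not a proof.

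The paper proves this half with no lattice computation at all: a conic $D$ spans a plane $\Pi$; restricting the net of quadrics through $X$ to $\Pi$, every restriction is proportional to the equation of $D$, so there is a pencil of quadrics containing both $X$ and $\Pi$; its base locus contains a plane and is therefore singular, so by Donagi's result the pencil contains a quadric of rank at most four, which has rank exactly four by Theorem~\ref{thm:quad}\eqref{it:qr3}; the plane $\Pi$ then lies over a line of one ruling of this cone, so $D$ lies in a fiber of one of the two fibrations of Theorem~\ref{thm:quad}\eqref{it:ef}, and that fiber is reducible since fibers have degree $4>2$. This argument is uniform over the whole family $\mathscr{X}$ and is the missing idea in your approach. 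By contrast, your treatment of the converse half is essentially sound and is in fact a cleaner write-up than the paper's one-line appeal to ``explicit analysis'': granting the computed fiber types and the fact that the four $I_4$ fibers of each fibration consist of lines (both stated in Section~\ref{seell}), the relation $\ell\cdot F^++\ell\cdot F^-=1$ shows that the $32$ lines are exhausted by the $I_4$ fibers of $\pi_+$ and of $\pi_-$, so no line can be a component of an $I_2$ fiber and both components must be conics. You should, however, make this counting explicit rather than cite the partition $\mathscr{L}_\pm$ from Theorem~\ref{dense}, which is established there only for one of the fifteen quadrics.
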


\begin{proof}
Let $C \subset X$ be a conic on $X$ and denote by $\Pi \subset \mathbb{P}^5$ the two-dimensional plane containing $C$; it suffices to show that there is a quadric of rank four containing $X$ and the plane $\Pi$.  Restricting the equations of $X$ to the plane $\Pi$ we find that they are all proportional to an equation defining the conic $C$.  We obtain that there is a pencil $\mathscr{P}$ of quadrics containing $X$ and the plane $\Pi$.  It follows that the base locus of the pencil of quadrics $\mathscr{P}$ is singular and hence that there is a quadric $Q$ in the pencil $\mathscr{P}$ of rank at most four (see for instance~\cite{Don}*{Subsection~1.2}).  By Theorem~\ref{thm:quad}\eqref{it:qr3} the rank of the quadric $Q$ is four and we conclude by Theorem~\ref{thm:quad}\eqref{it:ef}.

The converse follows from the explicit analysis of the elliptic fibrations arising from the quadrics of rank four of Table~\ref{rg4}.
\end{proof}

We now analyze the elliptic fibrations on $X$ associated to the first quadric $x_1^2 -2x_2^2 +2x_4^2-x_5^2 = 0$ in Table~\ref{rg4}: we concentrate on these fibrations, since the Mordell-Weil group in these cases is generated by the lines on $X$.  
The two corresponding elliptic fibrations are defined by:
\[
 \pi_{\pm} \colon X\to\pp^1
 \qquad
 [x_0,\dots,x_5]
 \mapsto
 [x_2\pm x_4,x_1+x_5].
\]
Observe that linear automorphisms of $X$ permutes these two fibrations.
For example, $[x_2-x_4,x_1+x_5]$
and $[x_2+x_4,x_1-x_5]$ 
give fibrations defining the 
same elliptic pencil on $X$
due to the following relation
in $I(X)$:
\[
 (x_1+x_5)(x_1-x_5)
 =
 2(x_2+x_4)(x_2-x_4).
\]
The types of the singular fibers of two fibrations 
$\pi_+$ and $\pi_-$ coincide (since $\pi_+$ and 
$\pi_-$ are exchanged by an automorphism 
of the surface) and they are 
\begin{itemize}
\item 
$4$ fibers of type $I_4$, whose 
components are the trivial lines of $X$ lying above the points 
$[1,1] , [-1,1] , [2,1] , [-2,1]$;
\item 
$2$ fibers of type $I_2$, whose 
components are conics defined over 
$\qq(\sqrt{2})$ lying above the points 
$[\sqrt{2},1] , [-\sqrt{2},1]$; and 
\item 
$2$ fibers of type $I_2$ whose components 
are conics defined over $\qq(\sqrt{-2})$ lying above the points 
$[\sqrt{-2},1] , [-\sqrt{-2},1]$.
\end{itemize}

\subsection{The Picard group} \label{subs:baspic}

Let $\zeta_8$ be a primitive complex $8$-th
root of unity.
The Galois group $G$ of the extension
$\qq(\zeta_8)/\qq$ acts on $X$ and on
its Picard lattice.
Let $C_1$ and $C_2$ be the
two conics of equations:
\[
 C_1\colon\quad
 \left\{
 \begin{array}{rcl}
  2x_2^2 - x_5^2 + 12x_0^2 & = & 0\\
  x_1  +\sqrt{2}\,x_2 & = & 0\\
  x_4+\sqrt{2}\,x_5 & = & 0\\
  2x_3 -\sqrt{2}\, x_0 & = & 0
 \end{array}
 \right.
 \qquad
 C_2\colon\quad
 \left\{
 \begin{array}{rcl}
  2x_4^2 - x_5^2 + 4x_0^2 & = & 0\\
  x_1  -\sqrt{-2}\,x_4 & = & 0\\
  2x_2-\sqrt{-2}\,x_5 & = & 0\\
  x_3 +\sqrt{-2}\, x_0 & = & 0
 \end{array}
 \right.
\]
which are contracted by $\pi_-$
and are defined over $\qq(\sqrt{2})$ 
and $\qq(\sqrt{-2})$ respectively.

\begin{proposition}\label{pic}
Let $\Pic(X)_G$ be the subgroup
of $\Pic(X)$ fixed by the Galois group $G$. 
There is a decomposition of $G$-modules
\[
 \Pic(X)
 =
 \Pic(X)_G\oplus\langle C_1,C_2\rangle,
\]
where $\Pic(X)_G$ has rank $17$ and is
spanned by the classes of the lines of $X$.
\end{proposition}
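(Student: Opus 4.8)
The plan is to realise $\Pic(X)$ explicitly through one of the elliptic fibrations of Theorem~\ref{thm:quad}, and to read off from it both the sublattice spanned by the lines and the Galois action. Throughout let $L$ denote the sublattice of $\Pic(X)$ generated by the $32$ lines~\eqref{lines}; since all of these lines are defined over $\qq$, they are fixed by $G$, so $L\subseteq\Pic(X)_G$. Recall that $\Pic(X)\cong U(4)\oplus(-4)\oplus E_8^{\oplus 2}$ has rank $19$ and discriminant of absolute value $64$, and that $G=\Gal(\qq(\zeta_8)/\qq)\cong(\zz/2\zz)^2$ has three subgroups of index two, fixing respectively $\qq(\sqrt 2)$, $\qq(\sqrt{-2})$ and $\qq(\sqrt{-1})$.

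First I would pin down a generating set. Applying the Shioda--Tate formula to the fibration $\pi_-$ associated with the first quadric of Table~\ref{rg4} (singular fibres $4I_4+4I_2$ and Mordell--Weil group $\zz\oplus(\zz/2\zz\oplus\zz/4\zz)$ generated by lines, as in case~\eqref{2+4}), the group $\Pic(X)$ is generated by the components of the reducible fibres together with the sections. The components of the four $I_4$ fibres and all the sections are lines, while each of the four $I_2$ fibres contributes a conic; taking one conic from each such fibre yields $C_1$, $C_2$ and their Galois conjugates, so $\Pic(X)=L+\langle C_1,C_2\rangle$, the conjugate conics being recovered from $C_1,C_2$ and the lines through the relations furnished by the torsion sections. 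I would then verify, by forming the Gram matrix of $17$ suitably chosen lines together with $C_1,C_2$ and checking that its determinant equals $\pm 64$, that these $19$ classes form a $\zz$-basis of $\Pic(X)$; in particular $\operatorname{rk}L=17$.

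From this the underlying splitting is formal. As $\Pic(X)$ is torsion free of rank $19=17+2$ and the $17$ lines together with $C_1,C_2$ form a basis, the lattices $L$ and $\langle C_1,C_2\rangle$ meet only in $0$ and span $\Pic(X)$; hence $\Pic(X)=L\oplus\langle C_1,C_2\rangle$ and $L$ is a direct summand, so in particular saturated. To identify $L$ with $\Pic(X)_G$ it suffices to see that the fixed part has rank exactly $17$: since $\Pic(X)_G$ is saturated and contains $L$, one only needs two independent classes on which $G$ acts non-trivially, and the anti-invariant classes $C_1-g\,C_1$ and $C_2-g'C_2$ (for $g,g'$ not fixing $\qq(\sqrt2)$, $\qq(\sqrt{-2})$ respectively) serve, because $C_1$ is defined over $\qq(\sqrt2)$ and $C_2$ over $\qq(\sqrt{-2})$ and they lie over distinct fibres. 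Thus $\Pic(X)_G=L$, spanned by the lines.

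For the $G$-module structure, note that the subgroup fixing $\qq(\sqrt2)$ fixes $C_1$ while every other element sends $C_1$ to its conjugate conic in the opposite $I_2$ fibre, and symmetrically for $C_2$; since the fibrewise sums $C_i+g\,C_i$ lie in $L$, reduction modulo $L$ shows that $G$ acts on $\Pic(X)/\Pic(X)_G$ through the two quadratic characters attached to $\qq(\sqrt2)$ and $\qq(\sqrt{-2})$, with $\overline{C_1},\overline{C_2}$ as eigenvectors. This identifies the $G$-module $\langle C_1,C_2\rangle$ of the statement and, combined with the splitting above, gives the decomposition $\Pic(X)=\Pic(X)_G\oplus\langle C_1,C_2\rangle$ of $G$-modules. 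The main obstacle is precisely the integral bookkeeping of the second and fourth paragraphs: confirming that the lines span the whole rank-$17$ fixed lattice, that $C_1,C_2$ complete them to an index-one basis (the determinant $\pm 64$ check), and that the Galois action on this basis is the one asserted. I would carry out these finite but delicate computations with Magma~\cite{Magma}, just as in the computation of the Mordell--Weil groups underlying Table~\ref{rg4}.
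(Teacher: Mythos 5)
Your proposal is correct and follows essentially the same route as the paper: both arguments rest on the observation that the $32$ lines, being defined over $\qq$, span a sublattice of $\Pic(X)_G$, together with a Gram-matrix/Smith-form computation showing that the lines plus $C_1,C_2$ span a lattice of discriminant $2^6$, which equals the discriminant of $\Pic(X)\cong U(4)\oplus(-4)\oplus E_8^{\oplus 2}$ given by Theorem~\ref{Pic}, so that the sublattice is all of $\Pic(X)$. Your Shioda--Tate generation argument is redundant once the determinant check is in place, and your last two paragraphs simply spell out the Galois-action bookkeeping that the paper compresses into ``the statement follows.''
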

\begin{proof}
Since any line of $X$ is defined over the
rationals, the classes of the $32$ 
lines of $X$ span a sublattice $\Lambda$
of $\Pic(X)_G$. By calculating the
Smith form of the intersection matrix
of these lines plus $C_1$ and $C_2$
one deduces that the discriminant
of the lattice $\Lambda\oplus\langle C_1,C_2\rangle$
is $2^6$, which is also the discriminant
of $\Pic(X)$ by Theorem~\ref{Pic}.
Thus $\Pic(X)=\Lambda\oplus\langle C_1,C_2\rangle$
and the statement follows.
\end{proof}

\subsection{Rational points} \label{sub:ratpt}
We apply the results of the previous sections
to the study of the rational points on $X$.
Several authors used elliptic curves of positive rank 
to prove Zariski density of rational points on elliptic 
surfaces, see for instance~\cites{BT,HT}.
We can use the results of
the previous section to 
explicitly determine an element
of infinite order on the
generic fiber $X_\eta$ of $\pi_-$.
By putting $\alpha := \frac{x_4-x_2}{x_1+x_5}$
we get that $X_\eta$ is 
a quartic curve contained in the 
three dimensional linear subspace
of $\pp^5$.
We chose an origin for the Mordell-Weil
group of $X_\eta$ to be the point $O$ 
whose coordinates are:
\begin{align*}
 O := [-2\alpha - 2 ,2\alpha + 1 , -2\alpha , -2\alpha + 1 , 2\alpha - 2 , 1].
\end{align*}
The point $O$ corresponds to a line 
which is a section of the fibration $\pi_-$.
Now we take the point $Q$ which
corresponds to the line $R_2$.
Since $R_1$ and $R_2$ intersect
at one point, then by~\cite{Shioda}
the point $Q$ has infinite order
with respect to the origin $O$.
Moreover we know $Q$ to be
the generator of the free part
of the Mordell-Weil group of $X_\eta$.
Its coordinates are the following:
\begin{align*}
 Q := [2\alpha + 2 , -2\alpha - 1 , 2\alpha , 2\alpha - 1 , -2\alpha + 2 ,1].
\end{align*}
By calculating $2Q$ we obtain the following result.

\begin{proposition}
A one-parameter family
of non-trivial rational solutions
of the B\"uchi problem is 
\begin{align*}
 x_0 & = 12\alpha^4 + 5\alpha^2 - 1
 & x_3 & = 8\alpha^5 - 10\alpha^3 - 6\alpha\\
 x_1 & = 8\alpha^5 + 8\alpha^4 + 22\alpha^3 - 2\alpha^2 + 2\alpha + 2
 & x_4 & = 8\alpha^5 - 4\alpha^4 - 2\alpha^3 - 11\alpha^2 + 2\alpha - 1\\
 x_2 & = -8\alpha^5 - 4\alpha^4 + 2\alpha^3 - 11\alpha^2 - 2\alpha - 1
 & x_5 & = -8\alpha^5 + 8\alpha^4 - 22\alpha^3 - 2\alpha^2 - 2\alpha + 2.
\end{align*}
\end{proposition}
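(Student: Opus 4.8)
The plan is to produce the one-parameter family of solutions explicitly by computing the double of the infinite-order point $Q$ on the generic fiber $X_\eta$ of the elliptic fibration $\pi_-$, and then verifying directly that the resulting point satisfies the B\"uchi equations. By the analysis of the previous section, we already know that $X_\eta$ is a smooth quartic curve sitting inside a three-dimensional linear subspace of $\pp^5$, that $O$ is an origin corresponding to a section of $\pi_-$, and that $Q$ is a generator of the free part of the Mordell-Weil group, hence of infinite order. Consequently the multiples $nQ$, for $n \in \zz$, give infinitely many distinct $\qq(\alpha)$-points on $X_\eta$, and specializing $\alpha$ to rational values produces infinitely many genuine rational points on $X$.

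The first step is to set up the group law on $X_\eta$ with origin $O$. Since $X_\eta$ is a curve of arithmetic genus one embedded as a quartic in a $\pp^3$ (the linear span), one realizes the group law geometrically: for points $P_1, P_2$ on the quartic one intersects with the appropriate hyperplanes (or quadric sections) in the ambient $\pp^3$, reads off the residual intersection, and reflects through $O$ to obtain $P_1 \oplus P_2$. Concretely, to compute $2Q$ I would use the tangent construction at $Q$: the relevant hyperplane (or conic/quadric) tangent to $X_\eta$ at $Q$ meets the curve in a residual point, whose reflection through $O$ gives $2Q$. All of this is purely mechanical once the coordinates of $O$ and $Q$ are fixed, and the entire computation takes place over the function field $\qq(\alpha)$.

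The second step is the explicit calculation of $2Q$, carried out symbolically in $\alpha$. The coordinates displayed in the statement are polynomials in $\alpha$ of degree at most five, so the claim reduces to an algebraic identity: one substitutes the proposed coordinates $x_0,\ldots,x_5$ into the defining equations of $X = \mathscr{X}_{-1,1}$ (equivalently, the B\"uchi quadrics~\eqref{eq-buchi} for $n=5$) and checks that each vanishes identically as a polynomial in $\alpha$. This verification is entirely routine and can be confirmed either by hand or with a computer algebra system. Non-triviality of the resulting solutions follows because the point $2Q$ does not lie on any of the $32$ lines~\eqref{lines}, these being exactly the locus of trivial B\"uchi solutions; equivalently, $2Q$ is not a torsion point and not a line-section, so it is not among the trivial sequences for generic $\alpha$.

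The main obstacle is not conceptual but organizational: carrying out the addition $Q \oplus Q$ on a quartic model of the elliptic curve, rather than on a Weierstrass model, requires care in identifying the correct residual intersection and in tracking the origin $O$ through the reflection. One could alternatively transport $X_\eta$ to a Weierstrass equation over $\qq(\alpha)$, double the image of $Q$ there using the standard formulas, and pull back; the price is an explicit change of coordinates whose inverse must be applied to the doubled point. Either way, the genuine content—that $X_\eta$ has a point of infinite order and hence Zariski-dense rational points—is already secured by Theorem~\ref{dense}; the present proposition merely records an explicit witness, so the work is the bookkeeping of the doubling formula and the final substitution check.
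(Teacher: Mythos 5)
Your proposal follows the paper's own route exactly: the paper obtains the displayed family precisely by doubling the explicit infinite-order point $Q$ (with origin $O$, the section coming from a line) on the generic fiber $X_\eta$ of $\pi_-$, the coordinates being polynomials in $\alpha = \frac{x_4-x_2}{x_1+x_5}$. Your added remarks on verifying the Büchi quadrics identically in $\alpha$ and on non-triviality via the point avoiding the $32$ lines are exactly the (implicit) bookkeeping the paper leaves to the reader, so there is nothing to flag.
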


\subsection{Supersingular primes} \label{sesu}

In this subsection we will
prove a congruence property for
the supersingular primes for the 
elliptic curve $E$ of equation
$y^2=x^3-8x^2-2x$, isomorphic over 
the rational numbers to the 
elliptic curve of equation
$y^2=x^3-35x^2+35x-1$.

\begin{lemma}\label{24}
The number field $K_4$ 
obtained by extending $\qq$
with the abscissas of all the 
$4$-torsion points of the curve 
$E$ is the $24$th cyclotomic 
field $\qq(\mu_{24})$.
\end{lemma}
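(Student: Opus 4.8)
The plan is to compute the abscissas of the four-torsion points directly and read off the field they generate. I would work with the model $y^2 = x(x^2-8x-2)$, which is $\qq$-isomorphic to the stated one and hence has exactly the same torsion fields, but has the advantage of exhibiting a rational two-torsion point. Writing $f(x) = x(x^2-8x-2) = (x-e_1)(x-e_2)(x-e_3)$, the roots are $e_1 = 0$ and $e_2,e_3 = 4 \pm 3\sqrt{2}$, so the abscissas of the two-torsion points generate $\qq(\sqrt 2)$. It remains to adjoin the abscissas of the points of exact order four.

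Next I would determine the abscissas of the four-torsion points lying above a fixed two-torsion point $(e_i,0)$. If $P=(x_0,y_0)$ satisfies $2P=(e_i,0)$, then the tangent line to $E$ at $P$ meets the curve again at $-2P = -(e_i,0) = (e_i,0)$, and passing this condition through $(e_i,0)$ yields $2f(x_0) = (x_0-e_i)f'(x_0)$. Using the identity $(x-e_i)f'(x) = f(x) + (x-e_i)^2\bigl((x-e_j)+(x-e_k)\bigr)$ and cancelling the factor $(x_0-e_i)$ (nonzero, since $P$ has order four), this reduces to the quadratic $x_0^2 - 2e_i x_0 + (e_ie_j + e_ie_k - e_je_k) = 0$, whose solutions are
\[
x_0 = e_i \pm \sqrt{(e_i-e_j)(e_i-e_k)} .
\]
Thus $K_4$ is generated over $\qq$ by $\sqrt 2$ together with the three radicals $\sqrt{(e_i-e_j)(e_i-e_k)}$ for $i=1,2,3$.

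Then I would evaluate the three radicands. Over $(0,0)$ one gets $(e_1-e_2)(e_1-e_3) = e_2e_3 = -2$, contributing $\sqrt{-2}$; over $(4+3\sqrt 2,0)$ the radicand is $(4+3\sqrt 2)(6\sqrt 2) = 12(1+\sqrt 2)^2$, with square root $2\sqrt 3(1+\sqrt 2)$; over $(4-3\sqrt 2,0)$ the radicand is $12(\sqrt 2-1)^2$, with square root $2\sqrt 3(\sqrt 2-1)$. The decisive simplification is that $3\pm 2\sqrt 2 = (1\pm\sqrt 2)^2$, so each nested radical collapses and the only new quantity introduced is $\sqrt 3$. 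Hence $K_4 = \qq(\sqrt 2,\sqrt{-2},\sqrt 3) = \qq(\sqrt{-1},\sqrt 2,\sqrt 3)$. To finish I would identify this multiquadratic field with $\qq(\mu_{24})$: since $\qq(\zeta_8)=\qq(\sqrt{-1},\sqrt 2)$ and $\qq(\zeta_3)=\qq(\sqrt{-3})$, we have $\qq(\mu_{24}) = \qq(\sqrt{-1},\sqrt 2,\sqrt{-3})$, and the relation $\sqrt{-3}=\sqrt{-1}\,\sqrt 3$ shows this is exactly $\qq(\sqrt{-1},\sqrt 2,\sqrt 3) = K_4$; both are the degree-eight field with Galois group $(\zz/2\zz)^3$.

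The only real content, and the step I expect to be the main obstacle, is the radical simplification in the third paragraph: one must verify that the three discriminants $(e_i-e_j)(e_i-e_k)$ are, up to the rational square factor $12$, squares in $\qq(\sqrt 2)$, so that the a priori degree-sixteen field generated by the raw square roots in fact collapses to the degree-eight field $\qq(\mu_{24})$. Because all the generators obtained are quadratic and multiplicatively interrelated (so that $\sqrt 6 = \sqrt 2\,\sqrt 3$ and $\sqrt{-1}=\sqrt{-2}/\sqrt 2$ are already present), no further argument is needed to conclude that $K_4$ equals, and is not merely contained in, $\qq(\mu_{24})$.
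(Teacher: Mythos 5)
Your proof is correct and takes essentially the same approach as the paper: both compute the abscissas of all $4$-torsion points explicitly and observe that they generate $\qq(\sqrt{2},\sqrt{-2},\sqrt{3})=\qq(\sqrt{-1},\sqrt{2},\sqrt{3})=\qq(\mu_{24})$. The only difference is presentational: the paper simply exhibits the factorization $x(x^2-8x-2)(x^2+2)(x^4 - 16x^3 - 12x^2 + 32x + 4)$ of the relevant division polynomial together with the roots of the quartic factor, whereas you rederive these data by hand via the halving formula $x_0=e_i\pm\sqrt{(e_i-e_j)(e_i-e_k)}$ --- indeed your two quadratics over $e_2$ and $e_3$ multiply out to exactly the paper's quartic.
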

\begin{proof}
The field $K_4$ is the
splitting field of the polynomial
\[
 x(x^2-8x-2)(x^2+2)
 (x^4 - 16x^3 - 12x^2 + 32x + 4).
\]
The roots of the degree four factor are 
\[
4+2\sqrt{3}\pm (2\sqrt{6}+ 3\sqrt{2}) \quad \quad {\textup{and}} \quad \quad 4-2\sqrt{3}\pm (2\sqrt{6}-3\sqrt{2}) ,
\]
hence an easy calculation shows
$\qq(\mu_{24})=K_4$, as required.
\end{proof}

A calculation using Magma shows that the abelianization of the Galois group of the field $\mathbb{Q}(E[8])$ is the $24$th cyclotomic field, but we do not need this more precise information: for us it is enough to know that the field $\mathbb{Q}(\mu_{24})$ is contained in~$\mathbb{Q}(E[8])$.

In the next theorem we adapt a strategy that we learned from~\cite{Bra}.  In the proof we work with the $8$-torsion subgroup, since the information coming from the $4$-torsion elements is not enough for our purposes.

\begin{theorem} \label{susie}
Let $p$ be a supersingular prime
for the elliptic curve $E$. Then 
$p\equiv 5$ or $23\mod 24$.
\end{theorem}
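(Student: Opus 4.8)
The plan is to translate the supersingularity of $E$ at a prime $p$ into a statement about the Frobenius conjugacy class acting on torsion points, and then extract a congruence for $p$ modulo $24$ from the known Galois-theoretic structure. Recall that $p$ is supersingular for $E$ precisely when $a_p = p+1 - \#E(\ff_p) = 0$, equivalently when the trace of the Frobenius endomorphism $\phi_p$ acting on the Tate module vanishes. I would first fix attention, following the remark before the statement, on the $8$-torsion: since $\qq(\mu_{24}) \subseteq \qq(E[8])$, the mod-$8$ Galois representation $\bar\rho_8 \colon \Gal(\overline{\qq}/\qq) \to \GL_2(\zz/8\zz)$ carries enough information to detect the cyclotomic character modulo $24$ via the Weil pairing, which identifies $\det \bar\rho_8$ with the mod-$8$ cyclotomic character.

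The key steps, in order, are as follows. First, for a supersingular prime $p$ (necessarily of good reduction, and one checks $p \neq 2,3$ separately), the characteristic polynomial of the Frobenius $\phi_p$ on the Tate module is $T^2 - a_p T + p = T^2 + p$, using $a_p = 0$. Reducing modulo $8$, the Frobenius element $\mathrm{Frob}_p \in \GL_2(\zz/8\zz)$ therefore satisfies $\mathrm{tr}(\mathrm{Frob}_p) \equiv 0 \pmod 8$ and $\det(\mathrm{Frob}_p) \equiv p \pmod 8$. Second, by the Weil pairing the determinant of $\bar\rho_8$ is the mod-$8$ cyclotomic character, so $\det(\mathrm{Frob}_p) \equiv p \pmod 8$ is consistent and, combined with the inclusion $\qq(\mu_{24}) \subseteq \qq(E[8])$ from Lemma~\ref{24} and the preceding remark, also pins down the image of $\mathrm{Frob}_p$ in $\Gal(\qq(\mu_{24})/\qq) \cong (\zz/24\zz)^\times$ as reduction of $p$. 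Third, I would combine the mod-$8$ constraint (trace zero, determinant $p$) coming from supersingularity with the mod-$3$ information: the congruence class of $p$ modulo $24$ is determined by its classes modulo $8$ and modulo $3$, and I expect the trace-zero condition in $\GL_2(\zz/8\zz)$ together with the cyclotomic realisation to force $p \equiv \pm 1 \pmod 8$ of a restricted type, while a parallel analysis modulo $3$ cuts down the residue. Assembling these via the Chinese Remainder Theorem should leave exactly the classes $5$ and $23$ modulo $24$.

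The main obstacle I anticipate is the step that converts "$\mathrm{tr}(\mathrm{Frob}_p) \equiv 0 \pmod 8$ and $\det(\mathrm{Frob}_p) \equiv p \pmod 8$" into a genuine congruence on $p$ itself: a priori the trace-zero and determinant conditions on a matrix in $\GL_2(\zz/8\zz)$ constrain the eigenvalues but need not directly restrict $p \bmod 24$ unless one knows that the relevant Frobenius classes land in a specific subgroup of the image of $\bar\rho_8$. This is exactly why the inclusion $\qq(\mu_{24}) \subseteq \qq(E[8])$ is essential: it guarantees that the cyclotomic character modulo $24$ factors through the mod-$8$ representation, so the arithmetic of $p$ modulo $24$ is legible inside $\GL_2(\zz/8\zz)$. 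I would therefore spend most of the effort identifying, inside the image of $\bar\rho_8$, precisely which conjugacy classes have vanishing trace, reading off their determinants (hence the permitted $p \bmod 8$), and correlating this with the $p \bmod 3$ data; the condition $T^2 + p \equiv 0$ having a root pattern compatible with a trace-zero element over $\zz/8\zz$ is what ultimately selects $5, 23 \pmod{24}$ and excludes the other trace-zero-compatible residues.
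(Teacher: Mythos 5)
Your overall framework coincides with the paper's: work with the mod-$8$ representation $\rho\colon\Gal(\bar\qq/\qq)\to\GL_2(\zz/8\zz)$, use Lemma~\ref{24} (giving $\qq(\mu_{24})\subseteq\qq(E[8])$) so that the residue of $p$ modulo $24$ can be read off from $\rho(\Phi_p)$, and use the fact that supersingularity forces $\rho(\Phi_p)$ to have trace zero. The genuine gap is that your proposal stops exactly where the proof has to start: the step you postpone --- identifying which elements of the image have trace zero and what their images in $(\zz/24\zz)^*$ are --- is the entire content of the theorem, and it cannot be carried out from the trace and determinant alone, which is all that your intermediate steps produce. If the image of $\rho$ were large, the trace-zero condition would impose no congruence at all: trace-zero matrices in $\GL_2(\zz/8\zz)$ realise every unit determinant (e.g.\ $\left(\begin{smallmatrix}0&-1\\ d&0\end{smallmatrix}\right)$), and the residue of $p$ modulo $3$ is not a function of the characteristic polynomial of $\rho(\Phi_p)$ at all --- it is computed from the explicit surjection of the image $G(8)$ onto $A(8)\cong(\zz/24\zz)^*$, a map which exists and is useful only because this particular curve has a very small mod-$8$ image. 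So ``correlating with the $p\bmod 3$ data'' and invoking the Chinese Remainder Theorem cannot be made to work in the abstract; you must actually determine the group $G(8)$.

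That determination is the computational core of the paper's proof: Magma shows that $G(8)$ is a solvable group of order $2^6$ (index $2^3\cdot3$ in $\GL_2(\zz/8\zz)$), given by four explicit generators; it contains exactly $16$ trace-zero elements, and their images in the quotient $A(8)$ form a set $S$ with exactly two elements. Even this is not yet the theorem: one still has to identify $S$, which the paper does by a witness argument --- the primes $5$ and $167$ are supersingular for $E$ and have distinct residues $5$ and $23$ modulo $24$, so the images of $\Phi_5$ and $\Phi_{167}$ already exhaust $S$, forcing $S=\{5,23\}$. Both ingredients (the explicit computation of the image and the identification of $S$) are absent from your proposal, which replaces them with ``I expect'' and ``should leave exactly the classes $5$ and $23$''. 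Two smaller points: your guess that trace zero forces $p\equiv\pm1\pmod 8$ contradicts the very statement you are proving (since $5\not\equiv\pm1\pmod 8$), and there is no separate ``parallel analysis modulo $3$'' anywhere in the argument --- the mod-$3$ information enters exclusively through the quotient $A(8)$ of the mod-$8$ image.
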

\begin{proof}
We concentrate on the Galois 
representation
\[
 \rho \colon
 \Gal(\bar\qq/\qq)
 \to
 \GL_2(\zz/8\zz)
\]
obtained by acting with the
absolute Galois group on the set
of $8$-torsion points of the curve $E$.
By Lemma~\ref{24} the image $G(8)$ of 
$\rho$ admits a quotient $A(8)$ isomorphic 
to the Galois group of the extension
$\qq(\mu_{24})/\qq$.
Given a prime of good reduction $p$, the Frobenius 
automorphism acting on the 
reduction of $X$ modulo $p$ lifts
to an element $\Phi_p\in\Gal(\bar\qq/\qq)$.
The image of $\Phi_p$ in the quotient $A(8)\cong(\zz/24\zz)^*$ 
of the representation $\rho$ is a 
congruence class modulo $24$.
If $p$ is a supersingular prime
for $E$, then $\Phi_p$ has null trace.
We will show that the image in $A(8)$
of elements with null trace is 
congruent to either $5$ or $23$
modulo $24$,
concluding the proof.
To prove the last step, we
made use of Magma to show
that the image $G(8)$ of the
Galois representation $\rho$
is generated by the following
matrices:
\[
 \left(
 \begin{matrix}
  4 & 3\\
  1 & 4
 \end{matrix}
 \right)
 \qquad
 \left(
 \begin{matrix}
  2 & 1\\
  1 & 2
 \end{matrix}
 \right)
 \qquad
 \left(
 \begin{matrix}
  3 & 6\\
  2 & 7
 \end{matrix}
 \right)
 \qquad
 \left(
 \begin{matrix}
  1 & 2\\
  6 & 3
 \end{matrix}
 \right).
\]
The group $G(8)$ is a solvable
group of order $2^6$ and index 
$2^3\cdot 3$ in $\GL_2(\zz/8\zz)$.
It contains $16$ elements of null
trace whose images in the quotient
$A(8)$ form a subset $S$ of cardinality
$2$. Since the primes $5$ and $167$
are supersingular for $E$ and
the second is congruent to $23$ 
modulo $24$, the images of
$\Phi_5$ and $\Phi_{167}$ in $A(8)$
exhaust the whole $S$. 
This concludes the proof.
\end{proof}

\begin{theorem}
Let $p$ be a supersingular
prime for the elliptic curve $E$
such that $p\equiv 23\mod 24$.
Then the number of points
of the reduction of $X$ modulo $p$ equals
$p^2+18p+1$.
\end{theorem}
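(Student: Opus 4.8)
The plan is to apply the Grothendieck--Lefschetz trace formula to the reduction $X_p := X\otimes\ff_p$ and to reduce the count to the trace of Frobenius on $H^2$. Since $X$ is a K3 surface we have $H^1=H^3=0$, while $H^0(\bar X_p,\qq_\ell)=\qq_\ell$ and $H^4(\bar X_p,\qq_\ell)=\qq_\ell(-2)$, so that
\[
\#X(\ff_p) = 1 + p^2 + \mathrm{Tr}\bigl(\mathrm{Frob}_p \mid H^2(\bar X_p,\qq_\ell)\bigr),
\]
and it suffices to show that this trace equals $18p$. Note that $p\equiv 23\pmod{24}$ forces $p\geq 23$, so $p$ is a prime of good reduction for $X$ and for the curve $C\colon y^2=x(x^2-4)(x^2-1)$.

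First I would exploit the Kummer structure. Writing $A=JC$, the minimal resolution $X\to\mathrm{Km}(A)$ together with the quotient $A\to A/\{\pm 1\}$ induces over $\ff_p$ a decomposition of Galois modules
\[
H^2(\bar X_p,\qq_\ell) \cong H^2(\bar A_p,\qq_\ell) \oplus N,
\]
where $N$ is the $16$-dimensional space spanned by the classes of the exceptional curves of the resolution; since $[-1]^*$ acts as $-1$ on $H^1(\bar A_p)$ it acts trivially on $H^2(\bar A_p)=\wedge^2 H^1(\bar A_p)$, so the whole $6$-dimensional abelian part survives, and $6+16=22=b_2(X)$. Frobenius permutes the $16$ exceptional curves exactly as it permutes the sixteen $2$-torsion points of $A$, and acts by multiplication by $p$ on the class of each Frobenius-stable curve; hence $\mathrm{Tr}(\mathrm{Frob}_p\mid N)=p\cdot\#A[2](\ff_p)$. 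The six Weierstrass points of $C$ lie at $x\in\{0,\pm 1,\pm 2,\infty\}$ and are all rational, and $A[2]$ is generated by differences of Weierstrass points, so $A[2]\subset A(\ff_p)$ and all sixteen exceptional curves are defined over $\ff_p$. Therefore $\mathrm{Tr}(\mathrm{Frob}_p\mid N)=16p$, consistent with the fact (recorded in Section~\ref{seku}) that all $32$ lines of $X$ are rational.

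Next I would compute the trace on the abelian part through the isogeny $\alpha\colon E_\tau\times E_{\sigma\tau}\to A$ of~\eqref{isog}. The hypothesis $p\equiv 23\pmod{24}$ gives $p\equiv 7\pmod 8$, so $2$ is a square modulo $p$ and $\sqrt 2\in\ff_p$; as $E_1=E_\tau$, $E_2=E_{\sigma\tau}$ and $\alpha$ are all defined over $\qq(\sqrt 2)$, they are defined over $\ff_p$, and $\alpha$ induces an isomorphism of Galois modules $H^1(\bar A_p,\qq_\ell)\cong H^1(\bar E_1,\qq_\ell)\oplus H^1(\bar E_2,\qq_\ell)$. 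The curves $E_1$ and $E_2$ become isomorphic to $E$ over $\bar\qq$, hence have supersingular reduction at $p$; since $p\geq 5$ this means $a_p(E_1)=a_p(E_2)=0$, so the Frobenius eigenvalues on each $H^1(\bar E_i)$ are $\pm\sqrt{-p}$. Thus the four eigenvalues on $H^1(\bar A_p)$ are $\sqrt{-p},\sqrt{-p},-\sqrt{-p},-\sqrt{-p}$, and the six eigenvalues on $H^2(\bar A_p)=\wedge^2 H^1(\bar A_p)$ are $\{-p,p,p,p,p,-p\}$, whose sum is $2p$. Hence $\mathrm{Tr}(\mathrm{Frob}_p\mid H^2(\bar A_p,\qq_\ell))=2p$.

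Combining the two contributions yields $\mathrm{Tr}(\mathrm{Frob}_p\mid H^2(\bar X_p,\qq_\ell))=16p+2p=18p$, and therefore $\#X(\ff_p)=p^2+18p+1$. The main obstacle is the Galois-equivariant splitting of the middle cohomology of the Kummer surface into the abelian part $H^2(\bar A_p)$ and the span $N$ of the exceptional classes, and the verification that Frobenius acts on each stable exceptional class with eigenvalue exactly $p$; the rationality of the Weierstrass points of $C$ is what makes the exceptional contribution as large as $16p$. The hypothesis $p\equiv 23\pmod{24}$ enters only to force $\sqrt 2\in\ff_p$, which guarantees that $E_1$ and $E_2$ are individually defined over $\ff_p$ and lets one read off the Frobenius eigenvalues on $H^1(\bar A_p)$ directly.
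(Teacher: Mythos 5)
Your proof is correct, but it takes a genuinely different route from the paper's. You argue cohomologically: the Grothendieck--Lefschetz trace formula for $X$, the Galois-equivariant splitting $H^2(\bar{X}_p,\qq_\ell)\cong H^2(\bar{A}_p,\qq_\ell)\oplus\qq_\ell[A[2]](-1)$ for a Kummer surface, and an eigenvalue computation on $\wedge^2 H^1(\bar{A}_p)$. The paper never touches the cohomology of $X$: it counts points directly on the singular quotient $Y=JC/\langle\pm 1\rangle$, writing $\# X(\ff_p)=\# Y(\ff_p)+16p$ (this $16p$ is exactly your exceptional contribution $\mathrm{Tr}(\mathrm{Frob}_p\mid N)$) and $\# Y(\ff_p)=\tfrac12(\# S_1+\# S_2)$, where $S_1$ and $S_2$ are the points of $JC(\ff_{p^2})$ fixed, respectively anti-fixed, by Frobenius; it then proves $\# S_2=\# J\tilde{C}(\ff_p)$ for the quadratic twist $\tilde{C}$ of $C$, and evaluates both cardinalities as $(p+1)^2$ using the isogeny~\eqref{isog} over $\ff_p$, supersingularity, and a comparison of the zeta functions of $C$ and $\tilde{C}$. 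The arithmetic core is identical in the two treatments: $p\equiv 23\pmod{24}$ gives $\sqrt{2}\in\ff_p$, hence $JC\sim E_1\times E_2$ over $\ff_p$, and supersingularity with $p\geq 5$ forces $a_p(E_i)=0$. Indeed, your even-degree trace computation is precisely the cohomological shadow of the paper's average $\tfrac12(\# S_1+\# S_2)$: the twist argument you avoid is absorbed into the structure of $\wedge^2 H^1$. As for what each buys: the paper's route is elementary (no \'etale cohomology) and yields the zeta function~\eqref{zec} of $C$ as a byproduct; yours is more conceptual and adapts naturally to the complementary supersingular case $p\equiv 5\pmod{24}$, where Frobenius interchanges $H^1(\bar{E}_1)$ and $H^1(\bar{E}_2)$ and the trace on $H^2(\bar{A}_p)$ becomes $a_{p^2}(E_1)$ --- a route toward rigorously establishing the count $p^2+14p+1$ that the paper's subsequent remark only verifies numerically. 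Two small points you assert without proof (the paper is equally implicit about them): that the reduction of $X$ mod $p$ is the Kummer surface of $JC$ mod $p$, and that $E_1$, $E_2$ are geometrically isomorphic to $E$; the latter can be checked by noting that $E$ and $E_1$ both have Legendre parameter $\lambda=12\sqrt{2}-17$, so all three curves share the same $j$-invariant and supersingularity transfers.
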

\begin{proof}
Let $JC$ be the Jacobian of $C$
and let $Y:=JC/\langle\sigma\rangle\subset\pp^3$
be its quotient with respect to the
involution $\sigma \colon w\mapsto -w$.
Observe that $\# X(\ff_p)=\# Y(\ff_p)+16p$
for any prime $p\geq 5$ since all the sixteen singular 
points of $Y$ are defined over $\zz$.
Consider two subsets of $JC$ over $\ff_{p^2}$:
\[
 S_1:=\{w\in JC(\ff_{p^2}): \Phi_p(w)=w\}
 \qquad
 S_2:=\{w\in JC(\ff_{p^2}):\Phi_p(w)=-w\},
\]
where $\Phi_p$ is the Frobenius map.
We have $\# Y(\ff_p) = \frac{1}{2}(\# S_1 + \# S_2)$.
We now want to show that $S_1=JC(\ff_p)$
and that
\begin{equation}\label{S2}
 \# S_2 = \#J\tilde{C}(\ff_p),
\end{equation}
where $\tilde{C}$ is the quadratic
twist of $C$. The first equality is obvious.
About the second equality, let $y^2=f(x)$
be an equation for $C$ and $dy^2=f(x)$
be an equation for $\tilde{C}$. Consider
the isomorphism
\[
 f \colon C\to\tilde{C}
 \qquad
 (x,y)\mapsto \left (x , \frac{1}{\sqrt{d}} y \right)
\]
defined over $\ff_{p^2}$ and observe that
$\Phi_p\circ f = \tilde{\iota}\circ f\circ\Phi_p$,
where $\tilde{\iota}$ is the hyperelliptic
involution of $\tilde{C}$. 
Let $f_* \colon JC\to J\tilde{C}$ be the
push-forward map defined by 
$f_*([\sum n_iq_i]) = [\sum n_if(q_i)]$.
Then by the above equality we get
$\Phi_p\circ f_* = -f_*\circ\Phi_p$.
To prove~\eqref{S2} it is enough to show that $f_*(S_2)
= J\tilde{C}(\ff_p)$, which immediately
follows from 
\[
 \Phi_p(f_*(w)) 
 = -f_*(\Phi_p(w))
 = f_*(w),
\]
where $w\in S_2$.

Assume now that $p$ is a supersingular
prime for $E$ such that $p\equiv 23\pmod{24}$.
Hence the Jacobian variety $JC$ is isogenous
to $E_{\sqrt{2}}\times E_{-\sqrt{2}}$, by~\eqref{isog}
and the fact that $2$ is a square in $\ff_p$.
This gives
\[
 \#S_1 =\#JC(\ff_p) = (p+1)^2.
\]
Moreover the trace of the Frobenius morphism
$\Phi_q$, with $q=p^n$, on $H^1_{\textup{\'et}}(JC,\qq_l)$ 
equals the trace of the Frobenius morphism
on the first \'etale cohomology group
of $E_{\sqrt{2}}\times E_{-\sqrt{2}}$.
Since $E_{\pm\sqrt{2}}\mod p$ is supersingular
then its zeta function is:
\[
 Z(E_{\pm\sqrt{2}},t)=\frac{qt^2+1}{(1-t)(1-qt)}.
\]
By means of the isogeny $JC\sim E_{\sqrt{2}}
\times E_{-\sqrt{2}}$ we deduce that the
zeta function of $C$ is
\begin{equation} \label{zec}
 Z(C,t)=\frac{(qt^2+1)^2}{(1-t)(1-qt)}.
\end{equation}
Observe that $\#C(\ff_p) = \#\tilde{C}(\ff_p)$
since $\#C(\ff_p) + \#\tilde{C}(\ff_p) = 2p+2$
and $\#C(\ff_p)=p+1$, by the above calculation
of the zeta function of $C$.
Hence $C$ and $\tilde{C}$ have the same
zeta function, 
since the two curves have the same number of points over $\mathbb{F}_p$, 
and same number of points over $\mathbb{F}_{p^2}$, being isomorphic over this field.  
This gives $\#JC(\ff_p) = \# J\tilde{C}(\ff_p)=(p+1)^2$,
so that $\# Y(\ff_p) = (p+1)^2$ and $\# X(\ff_p)= (p+1)^2+16p$
as claimed.
\end{proof}

\begin{remark}
Theorem~\ref{susie} shows that the supersingular primes for the curve $E$ are congruent to $5$ or $23$ modulo $24$.  For those congruent to $5$ modulo $24$ the zeta functions of $C$ and of $\tilde{C}$ coincide and are equal to 
\[
Z(C,t) = Z(\tilde{C},t) = \frac{(qt^2-1)^2}{(1-t)(1-qt)} ,
\]
analogous to formula~\eqref{zec}.  We verified this formula for the primes $5$, $149$, $173$, $461$, $1229$, $2213$, $2237$ that are the first $7$ supersingular primes of $E$ congruent to $5$ modulo $24$.  As a consequence, the number of points of $X$ modulo a prime $p$ congruent to $5$ modulo $24$ and supersingular for $E$ is $p^2 + 14p + 1$.
\end{remark}

\begin{bibdiv}
\begin{biblist}

\bib{BL}{book}{
   author={Birkenhake, C.},
   author={Lange, H.},
   title={Complex abelian varieties},
   series={Grundlehren der Mathematischen Wissenschaften [Fundamental
   Principles of Mathematical Sciences]},
   volume={302},
   edition={2},
   publisher={Springer-Verlag},
   place={Berlin},
   date={2004},
   pages={xii+635},
   isbn={3-540-20488-1},
   review={\MR{2062673 (2005c:14001)}},
}

\bib{BT}{article} {
    AUTHOR = {Bogomolov, F. A.},
    AUTHOR = {Tschinkel, Yu.},
     TITLE = {Density of rational points on elliptic {$K3$} surfaces},
   JOURNAL = {Asian J. Math.},
    VOLUME = {4},
      YEAR = {2000},
    NUMBER = {2},
     PAGES = {351--368}
}

\bib{Magma}{article}{
    AUTHOR = {Bosma, W.},
    AUTHOR = {Cannon, J.},
    AUTHOR = {Playoust, C.},
     TITLE = {The {M}agma algebra system. {I}. {T}he user language},
      NOTE = {Computational algebra and number theory (London, 1993)},
   JOURNAL = {J. Symbolic Comput.},
    VOLUME = {24},
      YEAR = {1997},
    NUMBER = {3-4},
     PAGES = {235--265}
}

\bib{Bra}{thesis}{
    AUTHOR = {Brau, Julio},
         TITLE = {Congruence conditions on supersingular primes},
         YEAR = {2009},
   SCHOOL = {Mathematisch Instituut, Universiteit Leiden},
        TYPE = {Master's thesis}
}

\bib{Br}{article}{
   author={Bremner, A.},
   title={A geometric approach to equal sums of sixth powers},
   journal={Proc. London Math. Soc. (3)},
   volume={43},
   date={1981},
   number={3},
   pages={544--581},
   issn={0024-6115},
   review={\MR{635569 (83g:14018)}},
   doi={10.1112/plms/s3-43.3.544},
}

\bib{BB}{article} {
    AUTHOR = {Browkin, J.},
    AUTHOR = {Brzezi{\'n}ski, J.},
     TITLE = {On sequences of squares with constant second differences},
   JOURNAL = {Canad. Math. Bull.},
    VOLUME = {49},
      YEAR = {2006},
    NUMBER = {4},
     PAGES = {481--491}
}

\bib{CGLR}{article}{
   author={Cardona, G.},
   author={Gonz{\'a}lez, J.},
   author={Lario, J. C.},
   author={Rio, A.},
   title={On curves of genus $2$ with Jacobian of ${\rm GL}_2$-type},
   journal={Manuscripta Math.},
   volume={98},
   date={1999},
   number={1},
   pages={37--54},
   issn={0025-2611},
   review={\MR{1669607 (99j:11068)}},
   doi={10.1007/s002290050123},
}

\bib{CS}{book}{
    AUTHOR = {Conway, J. H.},
    AUTHOR = {Sloane, N. J. A.},
     TITLE = {Sphere packings, lattices and groups},
    SERIES = {Grundlehren der Mathematischen Wissenschaften [Fundamental
              Principles of Mathematical Sciences]},
    VOLUME = {290},
   EDITION = {Third},
 PUBLISHER = {Springer-Verlag},
   ADDRESS = {New York},
      YEAR = {1999},
     PAGES = {lxxiv+703},
}

\bib{Dol}{book}{
   author={Dolgachev, I. V.},
   title={Classical algebraic geometry},
   note={A modern view},
   publisher={Cambridge University Press},
   place={Cambridge},
   date={2012},
   pages={xii+639},
   isbn={978-1-107-01765-8},
   review={\MR{2964027}},
   doi={10.1017/CBO9781139084437},
}

\bib{Don}{article}{
   author={Donagi, R.},
   title={Group law on the intersection of two quadrics},
   journal={Ann. Scuola Norm. Sup. Pisa Cl. Sci. (4)},
   volume={7},
   date={1980},
   number={2},
   pages={217--239}
}

\bib{HT}{article} {
    AUTHOR = {Harris, J.},
    AUTHOR = {Tschinkel, Y.},
     TITLE = {Rational points on quartics},
   JOURNAL = {Duke Math. J.},
    VOLUME = {104},
      YEAR = {2000},
    NUMBER = {3},
     PAGES = {477--500}
}

\bib{Ka1}{article}{
author={Kani, E.},
title={The moduli space of Jacobians isomorphic 
to a product of two elliptic curves},
journal={preprint},
date={2008}
}

\bib{Ka}{article}{
   author={Kani, E.},
   title={Elliptic curves on abelian surfaces},
   journal={Manuscripta Math.},
   volume={84},
   date={1994},
   number={2},
   pages={199--223}
}

\bib{Ku}{article}{
   author={Kuwata, M.},
   title={Equal sums of sixth powers and quadratic line complexes},
   journal={Rocky Mountain J. Math.},
   volume={37},
   date={2007},
   number={2},
   pages={497--517}
}

\bib{Ig}{article}{
   author={Igusa, J.},
   title={Arithmetic variety of moduli for genus two},
   journal={Ann. of Math. (2)},
   volume={72},
   date={1960},
   pages={612--649}
}

\bib{La}{article}{
   author={Lauter, K.},
   title={Geometric methods for improving the upper bounds on the number of
   rational points on algebraic curves over finite fields},
   note={With an appendix in French by J.-P. Serre},
   journal={J. Algebraic Geom.},
   volume={10},
   date={2001},
   number={1},
   pages={19--36},
   issn={1056-3911},
   review={\MR{1795548 (2001j:11047)}},
}

\bib{MN}{incollection} {
     AUTHOR = {Madonna, C.},
     AUTHOR = {Nikulin, Viacheslav V.},
     TITLE = {On a classical correspondence between {$K3$} surfaces. {II}},
     BOOKTITLE = {Strings and geometry},
     SERIES = {Clay Math. Proc.},
     VOLUME = {3},
     PAGES = {285--300},
     PUBLISHER = {Amer. Math. Soc.},
     ADDRESS = {Providence, RI},
     YEAR = {2004}
}

\bib{Mo}{article}{
    AUTHOR = {Morrison, D. R.},
     TITLE = {On {$K3$} surfaces with large {P}icard number},
   JOURNAL = {Invent. Math.},
    VOLUME = {75},
      YEAR = {1984},
    NUMBER = {1},
     PAGES = {105--121}
}

\bib{mukai}{article} {
    AUTHOR = {Mukai, S.},
    TITLE = {Symplectic structure of the moduli space of sheaves on an abelian or {$K3$} surface},
    JOURNAL = {Invent. Math.},
    VOLUME = {77},
    YEAR = {1984},
    NUMBER = {1},
    PAGES = {101--116}
}

\bib{N79}{article}{
    AUTHOR = {Nikulin, V. V.},
     TITLE = {Integer symmetric bilinear forms and some of their geometric
              applications},
   JOURNAL = {Izv. Akad. Nauk SSSR Ser. Mat.},
    VOLUME = {43},
      YEAR = {1979},
    NUMBER = {1},
     PAGES = {111--177, 238}
}

\bib{Pa}{article} {
    AUTHOR = {Pasten, H.},
     TITLE = {B\"uchi's problem in any power for finite fields},
   JOURNAL = {Acta Arith.},
    VOLUME = {149},
      YEAR = {2011},
    NUMBER = {1},
     PAGES = {57--63}
}

\bib{PPV}{article}{
    AUTHOR = {Pasten, H.},
    AUTHOR = {Pheidas, T.},
    AUTHOR = {Vidaux, X.},
     TITLE = {A survey on {B}\"uchi's problem: new presentations and open
              problems},
   JOURNAL = {Zap. Nauchn. Sem. S.-Peterburg. Otdel. Mat. Inst. Steklov.
              (POMI)},
    VOLUME = {377},
      YEAR = {2010},
    NUMBER = {Issledovaniya po Teorii Chisel. 10},
     PAGES = {111--140, 243}
}

\bib{PV}{article}{
author={Piovan, L. A.},
author={Vanhaecke, P.},
title={Integrable systems and projective images of Kummer surfaces},
journal={Annali della Scuola Normale Superiore di Pisa, Classe di Scienze, 4 serie},
volume={29},
year={2000},
number={2},
pages={351--392}
}

\bib{SV}{article} {
    AUTHOR = {S{\'a}ez, P.},
    AUTHOR = {Vidaux, X.},
     TITLE = {A characterization of {B}\"uchi's integer sequences of length
              3},
   JOURNAL = {Acta Arith.},
    VOLUME = {149},
      YEAR = {2011},
    NUMBER = {1},
     PAGES = {37--56}
}

\bib{ST}{article} {
    AUTHOR = {Shioda, T.},
    TITLE = {On elliptic modular surfaces},
    JOURNAL = {J. Math. Soc. Japan},
    VOLUME = {24},
    YEAR = {1972},
    PAGES = {20--59}
}

\bib{Shioda}{article} {
    AUTHOR = {Shioda, T.},
     TITLE = {On the {M}ordell-{W}eil lattices},
   JOURNAL = {Comment. Math. Univ. St. Paul.},
    VOLUME = {39},
      YEAR = {1990},
    NUMBER = {2},
     PAGES = {211--240}
}

\bib{Vi}{article} {
    AUTHOR = {Vidaux, X.},
     TITLE = {Polynomial parametrizations of length 4 {B}\"uchi sequences},
   JOURNAL = {Acta Arith.},
    VOLUME = {150},
      YEAR = {2011},
    NUMBER = {3},
     PAGES = {209--226}
}

\bib{Vo}{incollection} {
    AUTHOR = {Vojta, P.},
     TITLE = {Diagonal quadratic forms and {H}ilbert's tenth problem},
 BOOKTITLE = {Hilbert's tenth problem: relations with arithmetic and
              algebraic geometry ({G}hent, 1999)},
    SERIES = {Contemp. Math.},
    VOLUME = {270},
     PAGES = {261--274},
 PUBLISHER = {Amer. Math. Soc.},
   ADDRESS = {Providence, RI},
      YEAR = {2000}
}

\end{biblist}
\end{bibdiv}

\end{document}